\newtheorem{theorem}{Theorem}[section]
\newtheorem{proposition}[theorem]{Proposition}
\newtheorem{lemma}[theorem]{Lemma}
\theoremstyle{definition}
\newtheorem{definition}[theorem]{Definition}
\newtheorem{example}[theorem]{Example}
\newtheorem{corollary}[theorem]{Corollary}
\theoremstyle{remark}
\newtheorem{remark}[theorem]{Remark}
\newcommand{\ndash}{\nobreakdash-\hspace{0pt}}
\DeclareMathOperator{\Ad}{Ad}
\DeclareMathOperator{\ad}{ad}
\DeclareMathOperator{\Id}{Id}
\begin{document}

\title{Orthogonal symmetric affine Kac-Moody algebras}
\author{Walter Freyn}

\maketitle

\begin{abstract}
Riemannian symmetric spaces are fundamental objects in finite dimensional differential geometry. An important problem is the construction of symmetric spaces for generalizations of simple Lie groups, especially their closest infinite dimensional analogues known as Kac-Moody groups. We solve this problem and construct affine Kac-Moody symmetric spaces in a series of several papers. This paper focuses on the algebraic side: More precisely, we introduce OSAKAs, the algebraic structures used to describe the connection between affine Kac-Moody symmetric spaces and affine Kac-Moody algebras and describe their classification. 
\end{abstract}

\section{Introduction}

Riemannian symmetric spaces are fundamental objects in finite dimensional differential geometry, displaying numerous connections with Lie theory, physics, and analysis. The search for infinite dimensional symmetric spaces associated to affine Kac-Moody algebras has been an open question for 20 years, since it was first asked by C.-L. Terng in~\cite{Terng95}. We present a complete solution to this problem in a series of several papers, dealing successively with the functional analytic, the algebraic and the geometric aspects. In this paper we introduce and classify \emph{o}rthogonal \emph{s}ymmetric \emph{a}ffine \emph{Ka}c-Moody algebras (OSAKAs); OSAKAs are the central objects in the classification of affine Kac-Moody symmetric spaces as they provide the crucial link between the geometric and the algebraic side of the theory.  The functional analytic fundamentals and the geometry of Kac-Moody symmetric spaces are described elsewhere - see~\cite{Freyn07, Freyn10a, Freyn12c, Freyn12d, Freyn12e, Freyn12f}.

The so-called Kac-Moody algebras were introduced and first studied in the 60's independently by V.\ G.\ Kac~\cite{Kac68}, R.\ V.\ Moody~\cite{Moody69}, I.\ L.\ Kantor~\cite{Kantor68} and D.-N.\ Verma (unpublished) as a broad generalization of semisimple Lie algebras. There are distinguished subclasses of independent interest: for example finite dimensional simple Lie algebras are Kac-Moody algebras of so-called \emph{spherical type}. The most studied infinite dimensional subclass is the one of \emph{affine Kac-Moody algebras} and their associated \emph{affine Kac-Moody groups}. These affine Kac-Moody groups can be constructed as certain torus extensions $\widehat{L}(G, \sigma)$ of (possibly twisted) loop groups $L(G, \sigma)$. Correspondingly, affine Kac-Moody algebras are $2$\ndash dimensional extensions of (possibly twisted) loop algebras $L(\mathfrak{g}, \sigma)$. In this notation $G$ denotes a compact or complex simple Lie group, $\mathfrak{g}$ its Lie algebra and $\sigma\in \textrm{Aut}(G)$ a diagram automorphism, defining the ``twist'' and $L(\mathfrak{g}, \sigma)$ is defined to consists of functions $f:\mathbb{R}\rightarrow \mathfrak{g}$ such that $f(t+2\pi)=\sigma f(t)$, satisfying certain \emph{regularity conditions}. Depending on the regularity assumptions on the loops (e.g.~of Sobolev class $H^k$), one constructs families of different completions of the minimal (=algebraic) affine Kac-Moody groups (resp.~Kac-Moody algebras), that is those which consists of polynomial loops. Following Jacques Tits, completions defined by imposing functional analytic regularity conditions on the loops are called \emph{analytic completions} in contrast to the more algebraic \emph{formal completion}. These analytic completions play an important role in certain branches of mathematics and physics, integrable systems and differential geometry (for references see~\cite{Freyn10a, Freyn12c}).

In this paper we introduce and classify OSAKAs. Roughly speaking, OSAKAs consist of pairs of Kac-Moody algebras together with some involution, whose fixed point algebra is a loop algebra of \emph{``compact type''}. 
Their construction generalizes the notion of OSLAs (\emph{o}rthogonal \emph{s}ymmetric \emph{L}ie \emph{a}lgebras) well known from the theory of finite dimensional Riemannian symmetric spaces, as there is a bijective correspondence between OSLAs and simply connected Riemannian symmetric spaces (see~\cite{Helgason01}, chapter V).
A similar correspondence exists between OSAKAs and affine Kac-Moody symmetric spaces. In this paper we focus entirely on the algebraic side of the theory, studying OSAKAs. The relation between OSAKAs and affine Kac-Moody symmetric spaces will be investigated elsewhere.

 In section~\ref{sect:algebraic_kac_moody} we review some basics of the theory of (algebraic) affine Kac-Moody algebras. In case of ambiguity, we use the prefix ``algebraic'' for what is called in the literature affine Kac-Moody algebras, to distinguish this class from the wider class of \emph{geometric affine Kac-Moody algebras}. We introduce this new class of Lie algebras in section~\ref{sect:geometric_kac_moody}, constructing its members in a geometric way as certain extensions of Lie algebras of maps. 
  Geometric affine Kac-Moody algebras contain the subclass of affine Kac-Moody algebras (and their analytic completions) as a special case; nevertheless there are important examples, which are not contained in this subclass. 
  Then  in section~\ref{sect:irred_geometric_kac_moody} we investigate irreducible geometric affine Kac-Moody algebras; the description of irreducible geometric affine Kac-Moody algebras is central to the classification of OSAKAs.
  In section~\ref{sect:real_forms} we investigate real forms of geometric affine Kac-Moody algebras.  These real forms will be used in section~\ref{sect:osaka} to construct OSAKAs. In section~\ref{sect:compact_loop} we describe \emph{compact loop algebras}; they appear in the classification of OSAKAs as the fixed point algebras of the involutions. Then we describe and prove the classification theorem, relating the classification of OSAKAs to the classification of affine Kac-Moody algebras and their involutions. These in turn are classified by previous work (see~\cite{Heintze08, Heintze09, Kac90, BBMR95, MessaoudRousseau03, Levstein88}). In the final section~\ref{sect:a_1_example} we describe the orthogonal symmetric affine Kac-Moody algebras associated to $A_1^{(1)}$ in detail. We will describe elsewhere the relation between OSAKAs and affine Kac-Moody symmetric spaces. 
 My special thanks go to the anonymous referee for an extensive list of suggestions leading to significant improvement of the article.

\section{Algebraic affine Kac-Moody algebras}
\label{sect:algebraic_kac_moody}

In this section we review the algebraic theory of Kac-Moody algebras with an emphasis towards the class of affine Kac-Moody algebras.  References are the books~\cite{Kac90, Moody95, Carter05}. 

The basic idea of Kac-Moody theory follows from Cartan's classification of complex semisimple Lie algebras: this classification associates to each semisimple complex Lie algebra a certain matrix, its so-called \emph{Cartan matrix}. Conversely the Lie algebra can be recovered from its Cartan matrix via an explicit construction by generators and relations.  It is then a natural question to investigate the algebras, that result from more general types of matrices than the classical Cartan matrices by the same constructions:  the most important class of algebras constructed in this way is the one of Kac-Moody algebras.

We start thus with the definition of a Cartan matrix:

\begin{definition}

A \emph{Cartan matrix} $A^{n\times n}$ is a square matrix with integer
coefficients such that
\begin{enumerate}
  \item $a_{ii}=2$ and $a_{i\not=j}\leq 0 \label{geometriccondition}$,
  \item $a_{ij}=0 \Leftrightarrow a_{ji}=0\label{liealgebracondition} $,
  \item There is a vector $v>0$ (component wise) such that $Av>0$ (component wise).
\end{enumerate}
\end{definition}

A Cartan matrix $A^{n\times n}$ is called \emph{decomposable} iff $\{1, 2, \dots, n\}$ has a decomposition into two non-empty sets $N_1$ and $N_2$ such that $a_{ij}=0$ for $i\in N_1$ and $j\in N_2$. Otherwise it is called \emph{indecomposable}.

\begin{example}

There are -- up to equivalence -- four different $2$-dimensional Cartan matrices:
$$\left(\begin{array}{rr} 2&0\\0&2  \end{array}\right),
\left(\begin{array}{rr} 2&-1\\-1&2  \end{array}\right),
\left(\begin{array}{rr} 2&-1\\-2&2  \end{array}\right),
\left(\begin{array}{rr} 2&-1\\-3&2  \end{array}\right).$$

\noindent The first Cartan matrix is decomposable, the other three Cartan matrices are indecomposable. These Cartan matrices correspond to the finite dimensional simple Lie algebras
$\mathfrak{a}_{1}\times \mathfrak{a}_1, \mathfrak{a}_{2}, \mathfrak{b}_{2}, \mathfrak{g}_{2}$ respectively. Their dimensions are $6$, $8$, $10$, and $14$.  
Over the field $\mathbb{C}$ we have the equivalences: 
\begin{align*}
\mathfrak{g}(\mathfrak{a}_{1}\times \mathfrak{a}_{1})&\cong\mathfrak{sl}(2,\mathbb{C})\times \mathfrak{sl}(2, \mathbb{C}), \\
\mathfrak{g}(\mathfrak{a}_{2})&\cong \mathfrak{sl}(3,\mathbb{C}),\\
\mathfrak{g}(\mathfrak{b}_{2})&\cong\mathfrak{so}(5,\mathbb{C}),\\
\mathfrak{g}(\mathfrak{g}_{2})&\cong\mathfrak{g}_2(\mathbb{C}).
\end{align*}
\end{example}

\noindent A complete list of indecomposable Cartan matrices consists of the following
\begin{displaymath}
\mathfrak{a}_{n}, \mathfrak{b}_{n, n\geq 2}, \mathfrak{c}_{n, n\geq 3}, \mathfrak{d}_{n, n \geq 4}, \mathfrak{e}_{6}, \mathfrak{e}_{7}, \mathfrak{e}_{8}, \mathfrak{f}_{4}, \mathfrak{g}_{2}\,. 
\end{displaymath}

\noindent They correspond to the simple Lie algebras of the same name (see~\cite{Carter05}, sect. 8).

\begin{definition}
An Cartan matrix $A^{n\times n}$ of \emph{affine type} is a square matrix with integer
coefficients, such that
\begin{enumerate}
  \item $a_{ii}=2$ and $a_{i\not=j}\leq 0 \label{geometricconditionaffine}$.
  \item $a_{ij}=0 \Leftrightarrow a_{ji}=0\label{liealgebraconditionaffine}$.
  \item There is a vector $v>0$ (component wise) such that $Av=0$.
\end{enumerate}
\end{definition}

\begin{example}
There are -- up to equivalence -- two different $2$-dimensional affine Cartan matrices:
$$\left(\begin{array}{rr} 2&-2\\-2&2  \end{array}\right),
\left(\begin{array}{rr} 2&-1\\-4&2  \end{array}\right)\ .$$

\noindent They correspond to the non-twisted affine Kac-Moody algebra
$\widetilde{\mathfrak{a}}_{1}$ and the twisted affine Kac-Moody algebra $\widetilde{\mathfrak{a}}_{1}'$ respectively. Both algebras are infinite dimensional.
\end{example}

The distinction between \emph{twisted} and \emph{non-twisted} affine Kac-Moody algebras is most explicit from the construction of the Kac-Moody algebras as an extension of a loop algebra (see section~\ref{sect:geometric_kac_moody}). It is also reflected structurally in the dimensions of imaginary root spaces (see~\cite{Carter05}, section 18.2 and 18.4).

\begin{enumerate}
\item The indecomposable non-twisted affine Cartan matrices are
\begin{displaymath}
\widetilde{\mathfrak{a}}_{n}, \widetilde{\mathfrak{b}}_{n}, \widetilde{\mathfrak{c}}_{n}, \widetilde{\mathfrak{d}}_n, \widetilde{\mathfrak{e}}_{6}, \widetilde{\mathfrak{e}}_{7}, \widetilde{\mathfrak{e}}_{8}, \widetilde{\mathfrak{f}}_{4}, \widetilde{\mathfrak{g}}_{2}\,.
\end{displaymath}
They correspond to affine Kac-Moody algebras of the same name. In fact every non-twisted affine Cartan matrix $\widetilde{X}_l$ can be reduced to the corresponding Cartan matrix $X_l$ by the removal of the first column and the first line. Hence there is a bijection between indecomposable Cartan matrices and indecomposable non-twisted affine Cartan matrices. This bijection of Cartan matrices has its counterpart in a bijection between simple complex Lie algebras and  non-twisted affine complex Kac-Moody algebras, which is made explicit via the loop algebra realizations; see section~\ref{sect:geometric_kac_moody}. 

\item The indecomposable twisted affine Cartan matrices are
\begin{displaymath} 
\widetilde{\mathfrak{a}}_{1}', \widetilde{\mathfrak{c}}_{l}', \widetilde{\mathfrak{b}}_l^t, \widetilde{\mathfrak{c}}_{l}^t, \widetilde{\mathfrak{f}}_{4}^t, \widetilde{\mathfrak{g}}_{2}^t\,.
\end{displaymath}

Here, as in the non-twisted case, the name of the diagram $(\widetilde{X})$ describes the root system of the associated Kac-Moody algebras and the superscript $t$ labels it as a twisted one.
\end{enumerate}

They correspond to Dynkin diagrams of the same name. The Kac-Moody algebras associated to those diagrams can be constructed as fixed point algebras of the so-called twisted diagram automorphisms $\sigma$ of some non-twisted Kac-Moody algebra $X$. This construction suggests an alternative notation describing a twisted Kac-Moody algebra by the order of $\sigma$ and the type of $X$ (for details see~\cite{Carter05}, p.~451).

\noindent To all those classes of Cartan matrices one can associate Lie algebras, called their algebraic Lie algebra realizations (see~\cite{Kac90}, 1.1):

\begin{definition}
Let $A^{n \times n}$ be a finite or affine Cartan matrix. The \emph{realization} of $A$ is defined as the triple $(\mathfrak{h}, H, H^{v})$ where $\mathfrak{h}$ denotes a complex vector space, $H=\{\alpha_1,\dots \alpha_n\}\in \mathfrak{h}^v$ and $H^{v}:=\{\alpha_1^v, \dots \alpha_n^v\}\in \mathfrak{h}$ such that 
\begin{itemize}
\item $H$ and $H^v$ are linearly independent and satisfy,
\item $\langle h_i^v, h_i\rangle=a_{ij}$ for $i=1, \dots, n$,
\item $\textrm{dim}\mathfrak{h}-n=n-l$ where $l$ denotes the rank of $A$.
\end{itemize}
\end{definition}

\begin{remark}
An affine $n\times n$-Cartan matrix has rank $l=n-1$. Hence we get $\textrm{dim}(\mathfrak{h})=n+1$.
\end{remark}

From a realization we construct the Lie algebra realization as follows:

\begin{definition}

Let $A^{n \times n}$ be a generalized Cartan matrix. The Lie algebra realization of $A$, denoted $\mathfrak{g}(A)$, is the Lie algebra
$$\mathfrak{g}(A^{n\times n}) = \langle \mathfrak{h}, e_i, f_i,  i=1,\dots, n| \textrm{R}_1, \dots, \textrm{R}_6\rangle\,,$$ where
\begin{displaymath}
  \begin{array}{cl}
    \textrm{R}_1:& [h_i,h_j]=0\ \textrm{for}\ g_i, h_i\in \mathfrak{h}\,,\\
    \textrm{R}_2:& [e_i,f_j]=\alpha_i^v \delta_{ij}\,,\\
    \textrm{R}_3:& [h, e_j]=\langle \alpha_i, h\rangle e_j\,,\\
    \textrm{R}_4:& [h, f_j]=-\langle \alpha_i, h\rangle f_j\,,\\
    \textrm{R}_5:& (\textrm{ad} e_i)^{1-a_{ji}}(e_j)=0 \hspace{3pt}\quad \textrm{for}\ i\not= j\,,\\
    \textrm{R}_6:& (\textrm{ad} f_i)^{1-a_{ji}}(f_j)=0\hspace{3pt}\quad\textrm{for}\ i\not= j\,.\\
  \end{array}
\end{displaymath}
\end{definition}

\noindent Here relation $\textrm{R}_1$ describes that the elements $h_i, i=1, \dots n$ generate some Abelian subalgebra. The relations $\textrm{R}_2$ and $\textrm{R}_3$ show, that for each single $i$ the triple $e_i, f_i, h_i$ generates a $3$-dimensional subalgebra isomorphic to $\mathfrak{sl}(2)$. Relations $\textrm{R}_5$ and $\textrm{R}_6$ are called the \emph{Serre relations}. They describe the generation of additional root spaces by Lie brackets of generators $e_i$ (resp.~$f_i$), $i\in 1,\dots, n$.

If a (generalized) Cartan matrix $A^{n+m\times n+m}$ is decomposable into the direct sum of two Cartan matrices $A^{n\times n}$ and $A^{m\times m}$ then a similar decomposition holds for the realizations:
\begin{displaymath}
\mathfrak{g}(A^{n+m\times n+m})=\mathfrak{g}(A^{n\times n})\oplus\mathfrak{g}(A^{m\times m})\,.
\end{displaymath}

We introduce some further terminology. Let $\mathcal{G}$ be an affine Kac-Moody algebra. $\mathcal{G}$ has a \emph{triangular decomposition} 
$\mathcal{G}=\mathcal{N}^-\oplus \mathcal{H}\oplus \mathcal{N}^+\,.$
where $\mathcal{H}$ (resp. $\mathcal{N}^+$, $\mathcal{N}^-$) are generated by the $h_i$ (resp. $e_i$, $f_i$).
A \emph{Cartan subalgebra} $\mathcal{H}$ of $\mathcal{G}$ is a maximal $\ad(\mathcal{G})$-diagonalizable subalgebra. It is a consequence of the results in~\cite{PetersonKac83} that all Cartan subalgebras in an affine Kac-Moody algebras are conjugate.
$\mathcal{B}^{\pm}:=\mathcal{H}\oplus \mathcal{N}^{\pm}$ is the standard positive (negative) Borel subalgebra. A \emph{Borel subalgebra} $\mathcal{B}$ is subalgebra of $\mathcal{G}$ which is conjugate to $\mathcal{B}^{+}$ or $\mathcal{B}^{-}$. $\mathcal{B}^{+}$ and $\mathcal{B}^{-}$ are not conjugate.
Hence the set of all Borel subalgebras consists of exactly two conjugacy classes, called the \emph{positive} and the \emph{negative} conjugacy class.

\begin{definition}
Let $\mathfrak{g}_{\mathbb{C}}$ be a complex Lie algebra. A \emph{real form} of $\mathfrak{g}_{\mathbb{C}}$ is a Lie algebra $\mathfrak{g}_{\mathbb{R}}$ 
$$\mathfrak{g}_{\mathbb{C}}=\mathfrak{g}_{\mathbb{R}}\otimes \mathbb{C}\, .$$
\end{definition}

It is well-known that real forms are in bijection with conjugate-linear involution: Let $\mathfrak{g}_{\mathbb{R}}$ be a real form of $\mathfrak{g}_{\mathbb{C}}$, then conjugation along $\mathfrak{g}_{\mathbb{R}}$ is a conjugate linear involution of $\mathfrak{g}_{\mathbb{C}}$. Conversely the eigenvalues of some involution are $\pm 1$. Hence a (conjugate-linear) involution $\overline{\rho}:\mathfrak{g}_{\mathbb{C}}\longrightarrow \mathfrak{g}_{\mathbb{C}}$ induces a splitting of $\mathfrak{g}_{\mathbb{C}}$ into the $\pm 1$-eigenspaces  $\mathfrak{g}_{\pm}$. If $x\in \mathfrak{g}_{\pm}$ then $ix\in \mathfrak{g}_{\mp}$. Consequently $\mathfrak{g}_{-1}=i \mathfrak{g}_{1}$ and 

$$\mathfrak{g}_{\mathbb{C}}=\mathfrak{g}_{1}\oplus \mathfrak{g}_{-1}=\mathfrak{g}_{1}\oplus i \mathfrak{g}_{1}=\mathfrak{g}_{1}\otimes \mathbb{C}\,.$$ 

Involutions of affine Kac-Moody algebras (and consequently also the real forms) fall into two types, depending on their behaviors on the two conjugacy classes of Borel subalgebras (see~\cite{MessaoudRousseau03} and \cite{Heintze09}, section 6).
\begin{itemize}
\item {\bf First kind:} They preserve the  two conjugacy classes of Borel subalgebras.
\item {\bf Second kind:} They exchange the two conjugacy classes of Borel subalgebras.
\end{itemize}


\section{Geometric affine Kac-Moody algebras}
\label{sect:geometric_kac_moody}

The loop algebra construction of (algebraic) affine Kac-Moody algebras is developed in the books \cite{Kac90, Carter05} from an algebraic point of view. We follow the more geometric approach to the construction of affine Kac-Moody algebras as extensions of loop algebras as described in~\cite{Heintze09}. Remark that the class of ``geometric affine Kac-Moody algebras'' we construct is far more general. In fact we will see that affine Kac-Moody algebras correspond to irreducible geometric affine Kac-Moody algebras.

\noindent Let $\mathfrak{g}$ be a finite dimensional reductive Lie algebra over $\mathbb{F}=\mathbb{R}$ or $\mathbb{C}$. Hence by definition $\mathfrak{g}=\mathfrak{g}_{s}\oplus \mathfrak{g}_{a}$ is a direct product of a semisimple Lie algebra $\mathfrak{g}_s$ with an Abelian Lie algebra $\mathfrak{g}_{a}$. Let furthermore $\sigma=(\sigma_{s}, \sigma_{a})$ be some involution of $\mathfrak{g}$, such that $\sigma_s \in \textrm{Aut}(\mathfrak{g}_{s})$ denotes an automorphism of finite order of $\mathfrak{g}_s$ which preserves each simple factor $\mathfrak{g}_{s}$ and $\sigma_a$ is the identity on $\mathfrak{g}_{a}$. If $\mathfrak{g}_{s}$ is a Lie algebra over $\mathbb{R}$ we assume $\mathfrak{g}_{s}$ to be a Lie algebra of compact type. We define the loop algebra $L(\mathfrak g, \sigma)$ as follows
\begin{displaymath}
L(\mathfrak g, \sigma):=\{f:\mathbb{R}\longrightarrow \mathfrak{g}\hspace{3pt}|f(t+2\pi)=\sigma f(t), f \textrm{ satisfies some regularity conditions}\}\label{abstractkacmoodyalgebra}\,.
\end{displaymath}

We use the notation $L(\mathfrak g, \sigma)$ to describe in a unified way constructions that can be realized explicitly for loop algebras satisfying various regularity conditions. Regularity conditions used in applications include the following:
\begin{itemize}
\item $H^0$-Sobolev loops, denoted $L^0\mathfrak{g}$,
\item smooth, denoted $L^{\infty}\mathfrak{g}$,
\item real analytic, denoted $L_{\textrm{an}}\mathfrak{g}$,
\item (after complexification of the domain of definition)  holomorphic on $\mathbb{C}^*$, denoted $M\mathfrak{g}$,
\item holomorphic on an annulus $A_n\subset \mathbb{C}$, denoted $A_n\mathfrak{g}$, or 
\item algebraic (or equivalently: with a finite Fourier expansion), denoted $L_{alg}\mathfrak{g}$.
\end{itemize}

\begin{definition}
\label{def:irreducible_geometric_KMA}
A (geometric affine) Kac-Moody algebra $\mathfrak{g}$ is called \emph{irreducible} if there is no ideal $\mathfrak{h}\subset \mathfrak{g}$ which is isomorphic to a Kac-Moody algebra.
\end{definition}

\begin{definition}[indecomposable geometric affine Kac-Moody algebra]
\label{def:indecomposable_geometric_KMA}
A geometric affine Kac-Moody algebra $\mathfrak{g}$ is called indecomposable if it does not admit a decomposition $\mathfrak{g}=\mathfrak{g}_1\oplus \mathfrak{g}_2$ into two ideals $\mathfrak{g}_1$ and $\mathfrak{g}_2$. Conversely a \emph{decomposable geometric affine Kac-Moody algebra} is the direct sum of indecomposable geometric affine Kac-Moody algebras.
\end{definition}

\begin{example}
\label{geometricaffinekacmoodyalgebra}
The geometric affine Kac-Moody algebra associated to a pair $(\mathfrak{g}, \sigma)$ as described above is the algebra:
$$\widehat{L}(\mathfrak g, \sigma):=L(\mathfrak g, \sigma) \oplus \mathbb{F}c \oplus \mathbb{F}d\,,$$
equipped with the Lie bracket defined by:
\begin{alignat*}{1}
  [d,f(t)]&:=f'(t)\, ,\\
  [c,c]=[d,d]&:=0\, ,\\
	[c,d]=[c,f(t)]&:=0\, ,\\
  [f,g](t)&:=[f(t),g(t)]_{0} + \omega\left(f(t),g(t)\right)c\,.
\end{alignat*}
Here $f\in L(\mathfrak g, \sigma)$ and $\omega$ is a certain antisymmetric $2$-form on $L(\mathfrak {g}, \sigma)$, satisfying the cocycle condition. If the regularity of $L(\mathfrak g, \sigma)$ is chosen such that $L(\mathfrak g, \sigma)$ contains non-differentiable functions, then $d$ is defined on the (dense) subspace of differentiable functions. This algebra is indecomposable.
\end{example}

The form $\omega$ we may define as follows:
\begin{enumerate}
\item For holomorphic or algebraic loops we may define \begin{displaymath}\omega(f,g):=\textrm{Res}(\langle f, g' \rangle)\,.\end{displaymath}
\item For integrable loops we may use \begin{displaymath}\omega(f,g)=\frac{1}{2\pi}\int_{0}^{2\pi}\langle f, g'\rangle dt\,.\end{displaymath}
\end{enumerate}
By the formula for residua of holomorphic functions (see~\cite{Berenstein91}, section I.10) those two definitions of $\omega$ coincide for polynomial functions and for holomorphic functions. All functional analytic completions which we study contain the space of polynomials as a dense subspace. Hence we can use both formulations equivalently.

Let us reformulate the definition for non twisted affine Kac-Moody algebras in terms of functions on $\mathbb{C}^*$: Suppose $\sigma=\Id$. First we develop a function $f\in L(\mathfrak{g}, \Id)$ into its Fourier series $f(t)=\sum a_n{e}^{int}$. Then this function is naturally defined on a circle $S^1$; we understand this circle to be embedded as the  unit circle $\{z\in \mathbb{C}^*||z|=1\}\subset\mathbb{C}^*$; in this way the parameter $t$ gets replaced by the complex parameter $z:=e^{it}$, with $|z|=1$; understanding the Fourier expansion now as a Laurent expansion of $f$, we can calculate the annulus, $A_n$, on which this series is defined. For example the holomorphic realization $M\mathfrak{g}$, is defined by the condition, that for any $f\in M\mathfrak{g}$ the Fourier expansion describes a Laurent series expansion of a holomorphic function on $\mathbb{C}^*$.

\begin{definition}
The complex geometric affine Kac-Moody algebra associated to a pair $(\mathfrak{g}, \sigma)$ is the algebra
$$\widehat{M\mathfrak g}:=M\mathfrak g \oplus \mathbb{C}c \oplus \mathbb{C}d\,,$$
equipped with the Lie bracket defined by:
\begin{alignat*}{1}
  [d,f(z)]&:=izf'(z)\,,\\
  [c,c]=[d,d]&:=0\,,\\
  [c,d]=[c,f(z)]&:=0\,,\\
  [f,g](z)&:=[f(z),g(z)]_{0} + \omega(f(z),g(z))c\,.
\end{alignat*}
\end{definition}

\noindent As $\frac{d}{dt}e^{itn}=ine^{itn}=inz^n=iz\frac{d}{dz}z^n$ both definitions coincide.

\begin{definition}
A geometric affine Kac-Moody algebra $\widehat{L}(\mathfrak{g}, \sigma)$ is called 
semisimple (resp. simple) if $\mathfrak{g}$ is semisimple (resp. simple).
\end{definition}

\begin{definition}
The \emph{derived geometric affine Kac-Moody algebra} associated to a pair $(\mathfrak{g}, \sigma)$ is the algebra:
\begin{displaymath}\widetilde{L}(\mathfrak g, \sigma):=L(\mathfrak g, \sigma) \oplus \mathbb{F}c\,,\end{displaymath}
with the Lie bracket induced from the affine geometric Kac-Moody algebra.
\end{definition}

This algebra is called ``derived algebra'' as for semisimple geometric affine Kac-Moody algebras it satisfies
\begin{displaymath}\widetilde{L}(\mathfrak g, \sigma)\hspace{3pt}=\hspace{3pt}\left[\widehat{L}(\mathfrak g, \sigma), \widehat{L}(\mathfrak g, \sigma)\right]\,.\end{displaymath}

If $\mathfrak{g}$ is a simple Lie algebra then the associated algebraic and geometric Kac-Moody algebras coincide up to completion:

\begin{lemma}
Suppose $\mathfrak {g}$ is simple.
The realization of $\widehat{L}(\mathfrak g, \sigma)$ with algebraic loops $\widehat{L_{alg}\mathfrak{g}}^{\sigma}$ is a simple, algebraic affine Kac-Moody algebra.
\end{lemma}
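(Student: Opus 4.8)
The plan is to exhibit an explicit system of Chevalley generators inside $\widehat{L_{alg}\mathfrak{g}}^{\sigma}$, to read off the associated matrix, and to identify the resulting presentation with that of an algebraic affine Kac-Moody algebra $\mathfrak{g}(A)$. Since $\sigma$ has finite order, say $k$, I would first decompose $\mathfrak{g}=\bigoplus_{\bar{j}\in\mathbb{Z}/k\mathbb{Z}}\mathfrak{g}_{\bar{j}}$ into the eigenspaces of $\sigma$, where $\mathfrak{g}_{\bar 0}$ is the fixed point subalgebra; for a diagram automorphism of a simple $\mathfrak{g}$ this subalgebra is again simple. The condition $f(t+2\pi)=\sigma f(t)$ together with finiteness of the Fourier expansion then yields the $\mathbb{Z}$\ndash grading $L_{alg}(\mathfrak{g},\sigma)=\bigoplus_{n\in\mathbb{Z}}z^{n}\otimes\mathfrak{g}_{\bar n}$, compatible with the derivation $d$. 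This reduces everything to a graded computation.

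Next I would produce the generators. For $i=1,\dots,\ell$ let $e_i,f_i,h_i\in\mathfrak{g}_{\bar 0}$ be a system of Chevalley generators of the fixed point algebra; together with $c$ and $d$ they account for all nodes except the affine one. The affine node comes from a weight vector in $\mathfrak{g}_{\bar 1}$: I would set $e_0:=z\otimes E$ and $f_0:=z^{-1}\otimes F$ for an appropriate $\mathfrak{sl}(2)$\ndash triple $(E,H,F)$ attached to $\theta$ (the highest root in the untwisted case, respectively the highest weight of $\mathfrak{g}_{\bar 1}$ in the twisted cases), and $h_0:=c-H$, where the central contribution is produced by the cocycle $\omega$. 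I would then compute the entries $a_{ij}$ and verify that $A=(a_{ij})$ is exactly the untwisted affine Cartan matrix $\widetilde{X}_\ell$ when $\sigma=\Id$, and the corresponding twisted matrix otherwise.

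The core verification is that the relations $\textrm{R}_1$--$\textrm{R}_6$ hold for these generators. Relations $\textrm{R}_1$--$\textrm{R}_4$ are direct bracket computations; the only nonroutine point is that $[e_0,f_0]$ acquires the central term $c$ precisely through the residue cocycle $\omega$, which is what forces the central extension. The Serre relations $\textrm{R}_5,\textrm{R}_6$ follow from the representation theory of the relevant $\mathfrak{sl}(2)$\ndash subalgebras acting on $\mathfrak{g}$, with the nilpotency degrees governed by $\theta$. This produces a homomorphism $\phi:\mathfrak{g}(A)\to\widehat{L_{alg}\mathfrak{g}}^{\sigma}$. Surjectivity I would check grade by grade: the $e_i,f_i$ with $i\ge 1$ generate $\mathfrak{g}_{\bar 0}$, iterated brackets with $e_0,f_0$ sweep out every graded piece $z^{n}\otimes\mathfrak{g}_{\bar n}$, and $c,d$ lie in the image by construction.

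The main obstacle is injectivity of $\phi$, that is, showing that $\textrm{R}_1$--$\textrm{R}_6$ generate the full ideal of relations and that no graded piece collapses. Here I would invoke the Gabber--Kac theorem: for a symmetrizable (in particular, affine) generalized Cartan matrix the only graded ideal of $\mathfrak{g}(A)$ meeting the Cartan subalgebra trivially is the zero ideal. Since $\ker\phi$ is a graded ideal intersecting $\mathcal{H}$ trivially -- the images of the $h_i$, $c$, $d$ remain linearly independent -- it must vanish, so $\phi$ is an isomorphism. As $\mathfrak{g}$ is simple, $A$ is indecomposable and the algebra is the simple algebraic affine Kac-Moody algebra of the asserted type. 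The twisted cases require the extra care of correctly identifying the highest weight of $\mathfrak{g}_{\bar 1}$, and hence the precise twisted diagram, but introduce no new idea.
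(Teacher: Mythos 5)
Your argument is correct in outline, and it is the standard proof of the loop-algebra realization theorem (Kac, Ch.~7--8; Carter, Ch.~18): the eigenspace grading $L_{alg}(\mathfrak{g},\sigma)=\bigoplus_n z^n\otimes\mathfrak{g}_{\bar n}$, Chevalley generators of $\mathfrak{g}_{\bar 0}$ plus an affine node built from the highest root (resp.\ the highest weight of $\mathfrak{g}_{\bar 1}$), verification of the Serre presentation, surjectivity degree by degree, and injectivity via Gabber--Kac applied to $\ker\phi$, which is a graded ideal meeting the Cartan subalgebra trivially. The paper itself, however, proves nothing here: its entire proof is the remark that the lemma is a restatement of the loop-algebra realization of affine Kac--Moody algebras in \cite{Carter05}. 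So you have in effect supplied the proof of the theorem the paper outsources to the literature; your route does not conflict with the paper's, it simply makes explicit where symmetrizability of the affine Cartan matrix enters (the injectivity step) and where the cocycle enters (the central term in $[e_0,f_0]=c-H$). Two small points of care. First, the normalization: for the bracket computation to come out right you need $e_0=z\otimes E_{-\theta}$ and $f_0=z^{-1}\otimes E_{\theta}$, i.e.\ $E$ is a root vector for $-\theta$, not for $\theta$; as stated, ``an $\mathfrak{sl}(2)$-triple attached to $\theta$'' leaves this ambiguous. Second, the word ``simple'' in the lemma must be read in the paper's own sense (a geometric affine Kac--Moody algebra is called \emph{simple} iff $\mathfrak{g}$ is simple): as an abstract Lie algebra $\widehat{L}(\mathfrak{g},\sigma)$ contains the central ideal $\mathbb{F}c$ and is never simple, so this part of the claim is not something your indecomposability-of-$A$ argument needs to (or can) deliver --- it holds by the hypothesis on $\mathfrak{g}$ together with the paper's definition.
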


This lemma is a restatement of the loop algebra realization of affine Kac-Moody algebras described in~\cite{Carter05}.
If one chooses $\widehat{L}(\mathfrak{g}, \sigma)$ to be a realization of a more general regularity class of loops than algebraic loops then $\widehat{L}(\mathfrak g, \sigma)$ can be described as a completion of $\widehat{L_{alg}\mathfrak{g}}^{\sigma}$, the realization of $\widehat{L}(\mathfrak{g}, \sigma)$ with algebraic loops, with respect to some seminorms or some set of semi-norms. 

Following \cite{PressleySegal86}, p.168 we define a Lie algebra representation $R$ of a Lie algebra $\mathfrak{g}$ to be essentially equivalent to $R'$ if there is an injective map $\varphi: R\longrightarrow R'$ with dense image, equivariant with respect to $\mathfrak{g}$.

\begin{lemma}
The adjoint representation of $\widehat{L_{alg}\mathfrak{g}}^{\sigma}$ on 
$\widehat{L_{alg}\mathfrak{g}}^{\sigma}$ is essentially equivalent to the adjoint representation of $\widehat{L_{alg}\mathfrak{g}}^{\sigma}$ on $\widehat{L}(\mathfrak{g}, \sigma)$ for all functional analytic regularity conditions.
\end{lemma}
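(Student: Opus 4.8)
The plan is to exhibit the intertwiner $\varphi$ explicitly as the natural inclusion
\[
\varphi\colon \widehat{L_{alg}\mathfrak{g}}^{\sigma}\hookrightarrow \widehat{L}(\mathfrak{g},\sigma),
\]
and then to verify the three defining properties of essential equivalence: injectivity, equivariance, and density of the image. The map is well defined because $\widehat{L_{alg}\mathfrak{g}}^{\sigma}$ is by construction a Lie \emph{sub}algebra of $\widehat{L}(\mathfrak{g},\sigma)$: the algebraic ($=$ finite Fourier) $\sigma$\ndash twisted loops form a subalgebra of the regular $\sigma$\ndash twisted loops, the two realizations share the same one\ndash dimensional extensions $\mathbb{F}c$ and $\mathbb{F}d$, and --- crucially --- the cocycle $\omega$ agrees on the two sides, since the residue definition and the integral definition of $\omega$ coincide on polynomials, as recorded above. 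Hence the bracket of $\widehat{L_{alg}\mathfrak{g}}^{\sigma}$ is precisely the restriction of the bracket of $\widehat{L}(\mathfrak{g},\sigma)$.

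Injectivity of $\varphi$ is immediate. For equivariance, recall that the acting algebra in both representations is $\widehat{L_{alg}\mathfrak{g}}^{\sigma}$, so I must only check that for $x,v\in\widehat{L_{alg}\mathfrak{g}}^{\sigma}$ one has $\varphi\bigl(\ad(x)v\bigr)=\ad(x)\varphi(v)$, where the left bracket is computed in $\widehat{L_{alg}\mathfrak{g}}^{\sigma}$ and the right one in $\widehat{L}(\mathfrak{g},\sigma)$. Since $\varphi$ is the inclusion and the brackets agree on the subalgebra, both sides equal $[x,v]$ and equivariance is automatic. The one point needing care is the action of $d$: when the regularity class is weak enough to contain non\ndash differentiable loops, $\ad(d)$ is only densely defined on $\widehat{L}(\mathfrak{g},\sigma)$; but the image $\varphi(\widehat{L_{alg}\mathfrak{g}}^{\sigma})$ consists of algebraic, hence smooth, loops, so $\ad(d)$ is everywhere defined on the image and agrees there with differentiation in the algebraic algebra. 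Thus equivariance holds on all of the source representation space, which is all that the definition requires.

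The substantive step is density. Since $\mathbb{F}c\oplus\mathbb{F}d$ already lies in the image, density of $\varphi$ reduces to density of the algebraic loop algebra $L_{alg}\mathfrak{g}$ inside $L(\mathfrak{g},\sigma)$ with respect to the topology of the chosen regularity class. I would verify this class by class, using the $\sigma$\ndash twisted Fourier/Laurent expansion: writing $\mathfrak{g}=\bigoplus_{j}\mathfrak{g}_j$ for the eigenspace decomposition of $\sigma$, a twisted loop decomposes into components with shifted frequency spectra, and truncation of the (Fourier or Laurent) series produces algebraic loops. Standard approximation then supplies convergence in each topology --- Fourier truncation for the $H^0$\ndash Sobolev norm, mollification and Fej\'er sums in the smooth case, and uniform\ndash on\ndash compacta convergence of Laurent series in the real\ndash analytic and holomorphic cases. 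In each instance this recovers exactly the assertion, already noted above, that the space of polynomials is a dense subspace of every functional analytic completion under consideration, so the image of $\varphi$ is dense.

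The main obstacle is conceptual rather than computational: one must be comfortable that a \emph{single} inclusion map simultaneously intertwines the two adjoint actions even though the target is a genuine completion, and that the densely\ndash defined nature of $\ad(d)$ causes no difficulty because equivariance is only tested on the smooth source. Once this is granted, the density statement carries the only real content, and it is precisely the approximation property asserted earlier for each regularity class.
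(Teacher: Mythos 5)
Your proposal is correct and follows essentially the same route as the paper: both take the intertwiner $\varphi$ to be the natural inclusion of $\widehat{L_{alg}\mathfrak{g}}^{\sigma}$ into $\widehat{L}(\mathfrak{g},\sigma)$, with injectivity and equivariance immediate from the subalgebra structure and density following because the analytic completion is by construction a completion of the algebraic algebra. Your write-up merely fills in details the paper leaves implicit (agreement of the cocycles on polynomials, the densely-defined $\ad(d)$, and the class-by-class approximation arguments), which is a faithful elaboration rather than a different proof.
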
 

\begin{proof}
As $\widehat{L}(\mathfrak{g}, \sigma)$ is constructed as a completion of $\widehat{L_{alg}\mathfrak{g}}^{\sigma}$, the embedding $\varphi:\widehat{L_{alg}\mathfrak{g}}^{\sigma}\longrightarrow \widehat{L}(\mathfrak{g}, \sigma)$ has a dense image. Furthermore it is injective and $\widehat{L_{alg}\mathfrak{g}}^{\sigma}$-equivariant.
\end{proof}

\begin{remark}
Remark that \emph{formal completions} of algebraic Kac-Moody algebras as described for example in~\cite{Kumar02} do not preserve all symmetries of the Kac-Moody algebra.
\end{remark}

\section{Irreducible geometric affine Kac-Moody algebras}
\label{sect:irred_geometric_kac_moody}

We will now describe the irreducible geometric affine Kac-Moody algebras and investigate, how geometric affine Kac-Moody algebras are constructed from these building blocks.

We start with the investigation of the loop algebra part.

\begin{lemma}
Let $\mathfrak{g}=\mathfrak{g}_a\oplus \mathfrak{g}_s$ be a reductive Lie algebra and $\sigma=\sigma_a\otimes\sigma_s$ an involution. Then $L(\mathfrak{g}_a, \sigma_a)$ and $L(\mathfrak{g}_s, \sigma_s)$ are ideals in $L(\mathfrak{g}_s, \sigma_s)$.
\end{lemma}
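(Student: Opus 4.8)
The plan is to show that $L(\mathfrak{g}_a,\sigma_a)$ and $L(\mathfrak{g}_s,\sigma_s)$ are ideals in the full loop algebra $L(\mathfrak{g},\sigma)$, so I read the final clause ``ideals in $L(\mathfrak{g}_s,\sigma_s)$'' as a typographical slip for ``ideals in $L(\mathfrak{g},\sigma)$'', which is the only reading consistent with the decomposition $\mathfrak{g}=\mathfrak{g}_a\oplus\mathfrak{g}_s$ and $\sigma=\sigma_a\otimes\sigma_s$. The strategy is entirely pointwise: the loop bracket is evaluated pointwise as $[f,g](t)=[f(t),g(t)]$, so the Lie-algebra structure of the loops is inherited directly from that of $\mathfrak{g}$, and an ideal relation in $\mathfrak{g}$ propagates verbatim to the loop algebra.

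First I would record the direct-sum decomposition at the level of loops: since $\mathfrak{g}=\mathfrak{g}_a\oplus\mathfrak{g}_s$ as Lie algebras and $\sigma=\sigma_a\otimes\sigma_s$ respects this splitting (acting as $\sigma_a$ on the first factor and $\sigma_s$ on the second), any equivariant loop $f$ with $f(t+2\pi)=\sigma f(t)$ splits uniquely as $f=f_a+f_s$ where $f_a(t)\in\mathfrak{g}_a$ and $f_s(t)\in\mathfrak{g}_s$ for all $t$, and each component separately satisfies the corresponding twisting condition $f_a(t+2\pi)=\sigma_a f_a(t)$ and $f_s(t+2\pi)=\sigma_s f_s(t)$. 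This gives the internal direct-sum decomposition $L(\mathfrak{g},\sigma)=L(\mathfrak{g}_a,\sigma_a)\oplus L(\mathfrak{g}_s,\sigma_s)$ as vector spaces, and I would note that the regularity conditions are preserved componentwise since projection onto a direct summand is continuous in every regularity class under consideration.

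Next I would verify the ideal property. Because $\mathfrak{g}=\mathfrak{g}_a\oplus\mathfrak{g}_s$ is a decomposition into ideals, we have $[\mathfrak{g}_a,\mathfrak{g}_s]=0$ and each summand is closed under the bracket; in particular $[\mathfrak{g},\mathfrak{g}_a]\subseteq\mathfrak{g}_a$ and $[\mathfrak{g},\mathfrak{g}_s]\subseteq\mathfrak{g}_s$. Now for arbitrary $f\in L(\mathfrak{g},\sigma)$ and $h\in L(\mathfrak{g}_a,\sigma_a)$, the pointwise bracket satisfies $[f,h](t)=[f(t),h(t)]\in[\mathfrak{g},\mathfrak{g}_a]\subseteq\mathfrak{g}_a$ for every $t$, so $[f,h]$ takes values in $\mathfrak{g}_a$; since it also inherits the correct twisting and regularity, we get $[f,h]\in L(\mathfrak{g}_a,\sigma_a)$. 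The identical argument with $\mathfrak{g}_s$ in place of $\mathfrak{g}_a$ handles the second summand. This establishes both ideal claims simultaneously.

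There is no serious obstacle here: the lemma is a structural bookkeeping step whose content is that the pointwise bracket transports the ideal structure of the target Lie algebra to the loop algebra. The only point requiring a moment of care is the compatibility of the twisting automorphism with the decomposition, which is guaranteed by the hypothesis $\sigma=\sigma_a\otimes\sigma_s$ (and by the standing assumption that $\sigma_a$ is the identity on $\mathfrak{g}_a$ while $\sigma_s$ preserves each simple factor), so that the equivariance conditions split cleanly across the two summands. I would also flag explicitly in the final writeup that the statement as printed contains the typo noted above, since the intended and correct assertion is that both pieces are ideals in $L(\mathfrak{g},\sigma)$.
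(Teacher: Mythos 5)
Your proof is correct and takes essentially the same route as the paper: decompose each loop componentwise as $f=f_a+f_s$, then use the pointwise bracket together with the ideal decomposition $\mathfrak{g}=\mathfrak{g}_a\oplus\mathfrak{g}_s$ (in particular $[\mathfrak{g}_a,\mathfrak{g}_s]=0$) to transport the ideal property to the loop algebra. Your reading of the final clause as a typo for ``ideals in $L(\mathfrak{g},\sigma)$'' is also the only sensible interpretation, and your version is if anything more careful than the paper's, since you check the twisting and regularity conditions on each component explicitly.
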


\begin{proof}
Decompose any function $f\in L(\mathfrak{g}, \sigma)$ into the component functions $f_a\in L(\mathfrak{g}_a, \sigma_a)$ and $f_s\in L(\mathfrak{g}_s, \sigma_s)$.
Using that $\mathfrak{g}_a$ and $\mathfrak{g}_s$ are ideals in $\mathfrak{g}$, and using that the bracket is defined pointwise we get $[f_s, g_a](t)=[f_s(t),g_a(t)]=0$, which includes the statement.
\end{proof}

\begin{lemma}
Let $\mathfrak{g}_s$ be semisimple and suppose $(\mathfrak{g}_s, \sigma):=\bigoplus_i (\mathfrak{g_i}, \sigma_i)$.
Then \begin{displaymath}L(\mathfrak{g}_s, \sigma)=\bigoplus_i L(\mathfrak{g}_i, \sigma_i)\,.\end{displaymath}
Each algebra $L(\mathfrak{g}_i, \sigma_i)$ is an ideal in $L(\mathfrak{g}_s, \sigma)$.
\end{lemma}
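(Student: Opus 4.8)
The plan is to reduce the statement to the previous lemma by iterating the projection argument. First I would use that a semisimple Lie algebra is canonically the direct sum of its simple ideals, so the identity $\mathfrak{g}_s=\bigoplus_i\mathfrak{g}_i$ supplies, for each index $i$, a projection $\pi_i:\mathfrak{g}_s\to\mathfrak{g}_i$ which is a Lie algebra homomorphism annihilating every other factor. Any loop $f\in L(\mathfrak{g}_s,\sigma)$ then decomposes pointwise as $f=\sum_i f_i$ with $f_i:=\pi_i\circ f$, and I would check two things about each $f_i$: that it satisfies the regularity condition, and that it satisfies the correct twisting condition.

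For the regularity, the key observation is that each $\pi_i$ is a fixed linear map on $\mathfrak{g}_s$, so post\ndash composition with $\pi_i$ preserves every regularity class considered here (Sobolev, smooth, real\ndash analytic, holomorphic on $\mathbb{C}^*$ or on an annulus, algebraic); in the Fourier/Laurent picture $\pi_i$ acts simply coefficientwise, so the domains of convergence are unchanged. For the twisting condition I would invoke the hypothesis encoded in the notation $(\mathfrak{g}_s,\sigma)=\bigoplus_i(\mathfrak{g}_i,\sigma_i)$, which says precisely that $\sigma$ preserves each simple factor and restricts to $\sigma_i$ there; hence $\sigma$ commutes with each projection, and applying $\pi_i$ to the identity $f(t+2\pi)=\sigma f(t)$ yields $f_i(t+2\pi)=\sigma_i f_i(t)$. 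Thus $f_i\in L(\mathfrak{g}_i,\sigma_i)$. Conversely, any sum $\sum_i f_i$ of such pieces lies in $L(\mathfrak{g}_s,\sigma)$, and the decomposition is manifestly direct because the $\mathfrak{g}_i$ are; this establishes $L(\mathfrak{g}_s,\sigma)=\bigoplus_i L(\mathfrak{g}_i,\sigma_i)$ as vector spaces.

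Finally, for the ideal claim I would reuse the pointwise\ndash bracket argument of the preceding lemma verbatim: since distinct simple ideals of a semisimple Lie algebra commute, one has $[\mathfrak{g}_i,\mathfrak{g}_j]=0$ for $i\neq j$, and because the bracket on $L(\mathfrak{g}_s,\sigma)$ is evaluated pointwise we obtain $[f,g_i](t)=[f(t),g_i(t)]\in\mathfrak{g}_i$ for every $f\in L(\mathfrak{g}_s,\sigma)$ and every $g_i\in L(\mathfrak{g}_i,\sigma_i)$. This simultaneously shows that each $L(\mathfrak{g}_i,\sigma_i)$ is a subalgebra, that distinct summands commute, and that each summand is an ideal, which also upgrades the vector\ndash space decomposition to a decomposition of Lie algebras.

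I expect the only genuine subtlety — hardly an obstacle — to be the verification that projecting preserves the regularity conditions uniformly across all the completions packaged into the symbol $L(\mathfrak{g},\sigma)$; everything else is formal. I would also flag that the hypothesis that $\sigma$ preserves each simple factor (rather than permuting isomorphic ones) is essential, and is exactly what the direct\ndash sum notation for the pair $(\mathfrak{g}_s,\sigma)$ builds in.
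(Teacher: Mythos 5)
Your proposal is correct and follows essentially the same route as the paper's proof, which simply decomposes each loop $f\in L(\mathfrak{g}_s,\sigma)$ into its component loops $f_i\in L(\mathfrak{g}_i,\sigma_i)$ and uses the pointwise definition of the bracket to conclude that each summand is an ideal, yielding the direct sum. The extra verifications you carry out --- that the projections preserve each regularity class and that $\sigma$ commuting with the projections gives the twisting condition $f_i(t+2\pi)=\sigma_i f_i(t)$ --- are exactly the details the paper leaves implicit, so your write-up is a fleshed-out version of the same argument rather than a different one.
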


\begin{proof}
Decompose any function $f\in L(\mathfrak{g}_s, \sigma)$ into its component loops $f_i\in L(\mathfrak{g}_i, \sigma_i)$. As the bracket is defined pointwise we get $[f_i, g_j](t)=[f_{i}(t), g_{j}(t)]$ and in consequence that each $L(\mathfrak{g}_i, \sigma_i)$ is an ideal in $L(\mathfrak{g}, \sigma)$. This yields the direct product decomposition.\qedhere
\end{proof}

\noindent Nevertheless, the extension by the derivative and the central element behaves differently:

\begin{lemma}
Assume $\mathfrak{g}$ is a semisimple, non-simple Lie algebra. Then
$$\widehat{L}(\mathfrak{g}, \sigma)\not= \bigoplus_i \widehat{L}(\mathfrak{g}_i, \sigma_i)\,.$$
\end{lemma}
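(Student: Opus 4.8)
The plan is to distinguish the two Lie algebras by an invariant that the genuine direct sum on the right must possess but the left-hand algebra cannot, namely the dimension of the center. By the preceding lemma both sides contain the same loop algebra $L(\mathfrak{g}, \sigma) = \bigoplus_i L(\mathfrak{g}_i, \sigma_i)$ as a common ideal, so any discrepancy is forced to live entirely in the central and derivation parts. On the left one adjoins a single pair $\mathbb{F}c \oplus \mathbb{F}d$, whereas on the right one adjoins one such pair $\mathbb{F}c_i \oplus \mathbb{F}d_i$ for each of the $k \geq 2$ simple summands $\mathfrak{g}_i$ (here $k$ is finite and at least $2$ since $\mathfrak{g}$ is semisimple but not simple). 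Already as vector spaces the complement of the common loop ideal has dimension $2$ on the left and $2k \geq 4$ on the right, which suffices for the asserted inequality; I would then upgrade this to genuine non-isomorphism by comparing centers, making the structural difference intrinsic rather than merely a matter of how the summands are bookkept.

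First I would compute the center of $\bigoplus_i \widehat{L}(\mathfrak{g}_i, \sigma_i)$. Since the center of a finite direct sum is the direct sum of the centers, it suffices to treat one simple factor: in $\widehat{L}(\mathfrak{g}_i, \sigma_i)$ the element $d_i$ is not central, because $[d_i, f_i] = f_i' \neq 0$ for any non-constant loop $f_i$, and no nonzero constant loop is central since $\mathfrak{g}_i$ is simple and hence has trivial center. Thus each factor has one-dimensional center $\mathbb{F}c_i$, and the center of the right-hand algebra is $\bigoplus_i \mathbb{F}c_i$, of dimension $k \geq 2$.

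Next I would compute the center of $\widehat{L}(\mathfrak{g}, \sigma)$ directly from the defining bracket. Writing a general element as $f + ac + bd$ with $f \in L(\mathfrak{g}, \sigma)$, the condition $[d, f + ac + bd] = f' = 0$ forces $f$ to be constant; imposing commutation with an arbitrary loop $g$ then gives $[g, f]_0 + \omega(g, f)c - b\,g' = 0$, whose $\mathbb{F}c$-component and loop-component must vanish separately. Testing against constant $g$ kills the $g'$ term and yields $[g_0, f]_0 = 0$ for all $g_0 \in \mathfrak{g}$, so $f$ lies in $Z(\mathfrak{g})$; here the semisimplicity of $\mathfrak{g}$ enters decisively, giving $Z(\mathfrak{g}) = 0$, hence $f = 0$ and then $b\,g' = 0$ for all $g$, so $b = 0$. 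The center is therefore exactly $\mathbb{F}c$, one-dimensional. Since $1 \neq k$, the two algebras have centers of different dimensions, so they are not isomorphic and in particular not equal. (Conceptually, the two are related by the natural collapsing map identifying every $c_i$ with a single $c$ and every $d_i$ with a single $d$, exhibiting $\widehat{L}(\mathfrak{g}, \sigma)$ as a proper quotient of the direct sum with kernel of dimension $2(k-1)$.)

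The steps are routine once assembled; the one point demanding care is the center computation for $\widehat{L}(\mathfrak{g}, \sigma)$, namely ruling out nonzero constant central loops and the derivation $d$. Both are handled by semisimplicity together with the fact that $\ad(d)$ acts by the nonzero derivative, and one must observe that the cocycle term $\omega(g, f)c$ cannot cancel $-b\,g'$, since the former is transverse to the loop part $L(\mathfrak{g}, \sigma)$ where the latter lives. This transversality of $\mathbb{F}c$ and $\mathbb{F}d$ to the loop algebra is exactly what makes the component-wise comparison legitimate.
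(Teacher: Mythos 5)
Your proof is correct and takes essentially the same approach as the paper: the paper's own argument is precisely the comparison of centers, noting that the center of $\widehat{L}(\mathfrak{g}, \sigma)$ is $1$-dimensional while the center of $\bigoplus_i \widehat{L}(\mathfrak{g}_i, \sigma_i)$ has dimension equal to the number of simple factors. You merely carry out explicitly the center computations that the paper asserts without proof, which is a welcome addition but not a different route.
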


\begin{proof}
The center of $\widehat{L}(\mathfrak{g}, \sigma)$ is $1$-dimensional. In contrast the dimension of the center of $\bigoplus_i \widehat{L}(\mathfrak{g}_i, \sigma_i)$ is equivalent to the number of simple factors of $\mathfrak{g}$.
\end{proof}

\begin{corollary}
Geometric affine Kac-Moody algebras do not decompose into a direct sum of indecomposable geometric affine Kac-Moody algebras. 
\end{corollary}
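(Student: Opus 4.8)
The plan is to combine the two immediately preceding lemmas with the definitions of \emph{indecomposable} and \emph{irreducible} geometric affine Kac-Moody algebras, showing that the most natural candidate for a decomposition fails for exactly the reason identified in the last lemma. First I would fix notation: let $\mathfrak{g}$ be a semisimple but non-simple Lie algebra, so that $\mathfrak{g}=\bigoplus_i \mathfrak{g}_i$ with at least two simple factors, and consider the geometric affine Kac-Moody algebra $\widehat{L}(\mathfrak{g}, \sigma)$. The goal is to exhibit a single such $\widehat{L}(\mathfrak{g}, \sigma)$ that does \emph{not} split as a direct sum of indecomposable geometric affine Kac-Moody algebras; since every geometric affine Kac-Moody algebra built from a simple factor is itself irreducible (and indecomposable by the remark in Example~\ref{geometricaffinekacmoodyalgebra}), the only plausible decomposition into \emph{Kac-Moody} summands is the factorwise one $\bigoplus_i \widehat{L}(\mathfrak{g}_i, \sigma_i)$.

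The central step is to rule out this factorwise splitting. Here I would invoke the preceding lemma directly: the center of $\widehat{L}(\mathfrak{g}, \sigma)$ is one-dimensional, spanned by $c$, whereas the center of $\bigoplus_i \widehat{L}(\mathfrak{g}_i, \sigma_i)$ has dimension equal to the number of simple factors, which is at least two. A dimension-of-center count is an isomorphism invariant, so the two algebras are not isomorphic, and in particular $\widehat{L}(\mathfrak{g}, \sigma)$ is not the internal direct sum of the $\widehat{L}(\mathfrak{g}_i, \sigma_i)$. The subtlety I would then have to address is that Definition~\ref{def:indecomposable_geometric_KMA} asks about \emph{any} decomposition into two ideals, not merely the factorwise one, and Definition~\ref{def:irreducible_geometric_KMA} only forbids ideals isomorphic to a Kac-Moody algebra. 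So I must argue that any ideal complement one might produce would again force a center of dimension at least two, contradicting the one-dimensionality of the center of $\widehat{L}(\mathfrak{g}, \sigma)$: if $\widehat{L}(\mathfrak{g}, \sigma)=\mathfrak{a}\oplus\mathfrak{b}$ with both summands nontrivial geometric affine Kac-Moody algebras, each carries its own one-dimensional center, so the total center is at least two-dimensional.

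The main obstacle, and the point I would treat most carefully, is precisely the interpretation of the statement, which as phrased (``Geometric affine Kac-Moody algebras do not decompose\dots'') reads as a universal negative but is really the assertion that the class is \emph{not closed} under such decomposition, i.e.\ there exist indecomposable members (the simple ones) out of which the semisimple ones are assembled only as loop algebras, never as direct sums of Kac-Moody algebras. I would therefore phrase the corollary's proof as: the loop part $L(\mathfrak{g}, \sigma)=\bigoplus_i L(\mathfrak{g}_i, \sigma_i)$ does split into ideals by the earlier lemma, but the central extension welds these pieces together through the single element $c$ and the single derivation $d$, so the extended algebra admits no splitting into Kac-Moody summands. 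The essential inequality $\dim Z\bigl(\widehat{L}(\mathfrak{g}, \sigma)\bigr)=1 < \#\{\text{simple factors}\}=\dim Z\bigl(\bigoplus_i \widehat{L}(\mathfrak{g}_i, \sigma_i)\bigr)$ is the crux and follows immediately from the preceding lemma, so once the logical structure is pinned down the computation is already in hand.
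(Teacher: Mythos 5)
Your proof is correct and follows essentially the same route as the paper: the corollary is deduced from the immediately preceding lemma by comparing the one-dimensional center of $\widehat{L}(\mathfrak{g}, \sigma)$ (spanned by $c$) with the center of any putative direct sum of Kac-Moody ideals, which must have dimension at least two since each summand contributes its own central element. Your additional step ruling out \emph{arbitrary} ideal decompositions $\mathfrak{a}\oplus\mathfrak{b}$, rather than only the factorwise one, is a welcome tightening of an argument the paper leaves implicit, but it rests on the same center-counting idea.
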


\begin{lemma}[Ideals of geometric Kac-Moody algebras]
\label{lemma:splitting}

Let as above $\mathfrak{g}:=\mathfrak{g}_a\oplus\bigoplus_i \mathfrak{g}_i$ be the decomposition of a reductive Lie algebra into its Abelian factor $\mathfrak{g}_a$ and its simple factors $\mathfrak{g}_i$. Denote by~$\sigma_i$ (resp.~$\sigma_a$) the restriction of $\sigma$ to $\mathfrak{g}_i$ (resp.~$\mathfrak{g}_a$). Let $\widehat{L}(\mathfrak{g}, \sigma)$ be the associated geometric affine Kac-Moody algebra. Let $\mathbb{F}\in\{\mathbb{R}, \mathbb{C}\}$ depending if the Lie algebra $\mathfrak{g}$ is real or complex. Then the following holds: 
\begin{enumerate}
\item $\widetilde{L}(\mathfrak{g}, \sigma)$ is an ideal in $\widehat{L}(\mathfrak{g}, \sigma)$.
\item $\widetilde{L}(\mathfrak{g}_a, \sigma_a)$ and $\widetilde{L}(\mathfrak{g}_i, \sigma_i)$ are ideals in $\widetilde{L}(\mathfrak{g}, \sigma)$ and in $\widehat{L}(\mathfrak{g}, \sigma)$.
\item Let $\mathfrak{g}_i$ be a simple factor of $\mathfrak{g}$. The Lie algebra $\widetilde{L}(\mathfrak{g}_i, \sigma_i)\oplus \mathbb{F}d$ is an indecomposable, irreducible Kac-Moody algebra.
\item The map
\begin{displaymath}\varphi:\left(\widetilde{L}(\mathfrak{g}_a, \sigma_a)\oplus\bigoplus_{i=1}^n \widetilde{L}(\mathfrak{g}_i, \sigma_i)\right)\longrightarrow \widetilde{L}(\mathfrak{g}, \sigma)\,,\end{displaymath}
defined by $\varphi\left((f_a, r_a), (f_1, r_{c_1}), \dots, (f_n, r_{c_n})\right)=(f_a, f_1, \dots, f_n,  r_a+r_c=r_a+\sum r_{c_i})$ is a surjective Lie algebra homomorphism. In particular there is a short exact sequence
\begin{displaymath}1\longrightarrow \mathbb{F}^{n-1}\longrightarrow \left(\bigoplus_{i=1}^n \widetilde{L}(\mathfrak{g}_i, \sigma_i)\right) \longrightarrow \widetilde{L}(\mathfrak{g}, \sigma)\longrightarrow 1\,.\end{displaymath}
\end{enumerate}
\end{lemma}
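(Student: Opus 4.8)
The plan is to prove the four assertions in order, since each builds on the pointwise structure of the loop bracket together with the earlier lemmas on ideals in $L(\mathfrak{g},\sigma)$. For part (1), I would observe that $\widetilde{L}(\mathfrak{g},\sigma)=L(\mathfrak{g},\sigma)\oplus\mathbb{F}c$ is the kernel of the projection $\widehat{L}(\mathfrak{g},\sigma)\to\mathbb{F}d$, so I only need to check that bracketing with $d$ preserves $\widetilde{L}(\mathfrak{g},\sigma)$. This is immediate from the bracket relations in Example~\ref{geometricaffinekacmoodyalgebra}: $[d,f]=f'\in L(\mathfrak{g},\sigma)$ and $[d,c]=0$, so $\widetilde{L}(\mathfrak{g},\sigma)$ is an ideal. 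For part (2), I would use the previous two lemmas, which already establish that $L(\mathfrak{g}_a,\sigma_a)$ and each $L(\mathfrak{g}_i,\sigma_i)$ are ideals in $L(\mathfrak{g},\sigma)$ because the bracket is pointwise and the $\mathfrak{g}_i,\mathfrak{g}_a$ are ideals in $\mathfrak{g}$. To upgrade these to ideals of $\widetilde{L}$ and $\widehat{L}$, I must check closure under the cocycle term and under $[d,-]$: bracketing two loops in a factor lands in that factor plus a multiple of $c$, so I need $c\in\widetilde{L}(\mathfrak{g}_i,\sigma_i)$, which holds by definition, and $[d,f_i]=f_i'$ stays in $L(\mathfrak{g}_i,\sigma_i)$ since differentiation is pointwise and preserves the twisting condition $f(t+2\pi)=\sigma f(t)$.

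For part (3), I would invoke the earlier lemma stating that for $\mathfrak{g}$ simple the algebraic realization $\widehat{L_{alg}\mathfrak{g}}^{\sigma}$ is a simple algebraic affine Kac-Moody algebra, together with the completion remarks, to conclude that $\widetilde{L}(\mathfrak{g}_i,\sigma_i)\oplus\mathbb{F}d$ is (a completion of) an affine Kac-Moody algebra. Irreducibility follows from Definition~\ref{def:irreducible_geometric_KMA}: since $\mathfrak{g}_i$ is simple, the only nontrivial ideals of the full algebra are the center $\mathbb{F}c$ and $L(\mathfrak{g}_i,\sigma_i)$, neither of which is itself a Kac-Moody algebra, so no proper ideal is isomorphic to a Kac-Moody algebra. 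Indecomposability follows from the preceding lemma on the one-dimensionality of the center together with Definition~\ref{def:indecomposable_geometric_KMA}, since a simple factor cannot split the single central direction and the derivation $d$ off into two ideals.

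The heart of the statement is part (4), and this is where I expect the main obstacle. The map $\varphi$ is defined by summing the several central coordinates $r_a+\sum r_{c_i}$ into a single central coordinate $r$ while keeping the loop components intact; I would first verify it is a Lie algebra homomorphism. The subtle point is that the source carries one central generator per factor, whereas the target $\widetilde{L}(\mathfrak{g},\sigma)$ has a single central element, and one must confirm that the cocycle $\omega$ on $L(\mathfrak{g},\sigma)$ restricts on each factor to the cocycle used in $\widetilde{L}(\mathfrak{g}_i,\sigma_i)$ and vanishes on cross terms between different factors. This vanishing is exactly what makes the collapsed central coordinate well-defined: for $f_i,g_j$ in distinct factors one has $\langle f_i,g_j'\rangle=0$ pointwise by orthogonality of the simple summands under the invariant form, so $\omega(f_i,g_j)=0$ and the two central contributions never interfere. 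Granting this, surjectivity is clear since every loop and every value of $r$ is hit, and the kernel consists precisely of those central tuples $(r_a,r_{c_1},\dots,r_{c_n})$ with zero loop part summing to zero, an $(n-1)$\ndash dimensional space $\mathbb{F}^{n-1}$. I would close by assembling these facts into the short exact sequence, noting that identifying the kernel with $\mathbb{F}^{n-1}$ and its image as a central subalgebra is the only place requiring genuine care.
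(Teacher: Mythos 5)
Your proposal is correct and follows essentially the same route as the paper's proof: parts (1)--(3) by direct inspection of the bracket relations and the earlier loop-algebra ideal lemmas, and part (4) by verifying that $\varphi$ respects the central extensions and identifying the kernel among the central coordinates. The one point you spell out more explicitly than the paper --- that the cocycle $\omega$ vanishes on cross terms between distinct factors --- is exactly what the paper's displayed computation uses implicitly when it collapses $\omega_a(f_a,\bar{f}_a)+\sum_i \omega_i(f_i,\bar{f}_i)c_i$ into a single term $\omega(\cdot,\cdot)c$.
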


\begin{proof}~
\begin{enumerate}
\item Recall the description by generators and relations (example~\ref{geometricaffinekacmoodyalgebra}); it is immediate, that $\widetilde{L}(\mathfrak{g}, \sigma)$ is an ideal.
\item To check the second assertion, it is sufficient to verify that the Lie bracket preserves the two components: let $f_i\in \widetilde{L}(\mathfrak{g}_i, \sigma_i)$, and $g+\mu d\in \widehat{L}(\mathfrak{g}, \sigma)$.  Then $[f_i, g+\mu d]= [f_i, g]-\mu f_i'$. $f_i'$ is in $\widetilde{L}(\mathfrak{g}_i, \sigma_i)$, the same is true for $[f_i, g]$, as it is true pointwise for elements in $\mathfrak{g}$.

\item 3.\ follows directly.

\item To prove the fourth claim, remark first that $\varphi|_{L(\mathfrak{g}, \sigma)}$ is an isomorphism. So we are left with checking the behavior of the extensions.
\begin{alignat*}{1}
\hspace{3pt}&\hspace{3pt}\varphi\left[\left((f_a, r_a), (f_1, r_{c_1}), \dots, (f_n, r_{c_n})\right); ((\bar{f}_a, \bar{r}_a), (\bar{f}_1, \bar{r}_{c_1}), \dots, \left(\bar{f}_n, \bar{r}_{c_n})\right)\right] = \\
= &\hspace{3pt}\varphi[f_a, f_1, \dots, f_n;\bar{f}_a, \bar{f}_1, \dots, \bar{f}_n]_0+\varphi\left(\omega_a(f_a, \bar{f}_a)+\sum_{i=1}^n\omega_i\left(f_i, \bar{f}_i'\right)c_i\right)=\\
= &\hspace{3pt}\left[\varphi(f_a, f_1, \dots, f_n); \varphi(\bar{f}_a, \bar{f}_1, \dots, (\bar{f}_n))\right]_0+\sum_{i=1}^n \omega\left(\varphi(f_a, f_1, \dots, f_n), \varphi(\bar{f}_a, \bar{f}_1, \dots , \bar{f}_n)\right)c=\\
= &\hspace{3pt}\left[\varphi((f_a, r_{a}), (f_1, r_{c_1}), \dots, (f_n, r_{c_n}));\varphi((\bar{f}_a, \bar{r}_{a}), (\bar{f}_1, \bar{r}_{c_1}), \dots, (\bar{f}_n, \bar{r}_{c_n}))\right]
\end{alignat*}
The existence of the short exact sequence is now immediate.\qedhere
\end{enumerate}

\end{proof}

\noindent Now we want to prove a similar splitting theorem for automorphisms of complex geometric affine Kac-Moody algebras.

\noindent The following lemma allows to restrict this problem to the case of loop algebras:

\begin{samepage}
\begin{lemma}~ 
\label{lemma_reduction_of_isomorphism_to_loop_algebra}
\begin{enumerate}
\item Any automorphism $\widehat{\varphi}:\widehat{L}(\mathfrak{g}_{\mathbb {R}},\sigma) \longrightarrow \widehat{L}(\mathfrak{g}_{\mathbb {R}}, \sigma)$ induces an automorphism of the derived algebras $\widetilde{\varphi}:\widetilde{L}(\mathfrak{g}_{\mathbb {R}}, \sigma)\longrightarrow \widetilde{L}(\mathfrak{g}_{\mathbb {R}}, \sigma)$.
\item Any automorphism $\widehat{\varphi}:\widehat{L}(\mathfrak{g}_{\mathbb {R}},\sigma) \longrightarrow \widehat{L}(\mathfrak{g}_{\mathbb {R}}, \sigma)$ induces an automorphism of the loop algebras $\varphi:L(\mathfrak{g}_{\mathbb {R}}, \sigma)\longrightarrow L(\mathfrak{g}_{\mathbb {R}}, \sigma)$.
\end{enumerate}
\end{lemma}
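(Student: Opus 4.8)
The plan is to single out two canonical subalgebras of $\widehat{L}(\mathfrak{g}_{\mathbb{R}}, \sigma)$ that every Lie algebra automorphism must preserve, namely the derived subalgebra and the centre, and to read off the two claimed induced maps from them. The two structural facts I rely on are both recorded earlier in the text: for semisimple $\mathfrak{g}$ one has $\widetilde{L}(\mathfrak{g}_{\mathbb{R}}, \sigma) = [\widehat{L}(\mathfrak{g}_{\mathbb{R}}, \sigma), \widehat{L}(\mathfrak{g}_{\mathbb{R}}, \sigma)]$, and the centre of $\widehat{L}(\mathfrak{g}_{\mathbb{R}}, \sigma)$ is the one-dimensional space $\mathbb{F}c$.

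For the first assertion I would argue that the derived subalgebra is characteristic. Indeed, for any automorphism $\widehat{\varphi}$ one has $\widehat{\varphi}\bigl([\widehat{L}, \widehat{L}]\bigr) = [\widehat{\varphi}(\widehat{L}), \widehat{\varphi}(\widehat{L})] = [\widehat{L}, \widehat{L}]$, and since $[\widehat{L}, \widehat{L}] = \widetilde{L}(\mathfrak{g}_{\mathbb{R}}, \sigma)$ the restriction $\widetilde{\varphi} := \widehat{\varphi}|_{\widetilde{L}}$ is a bijective Lie algebra homomorphism of $\widetilde{L}(\mathfrak{g}_{\mathbb{R}}, \sigma)$ onto itself, i.e.\ an automorphism. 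This settles part~(1).

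For the second assertion I would first check that $\mathbb{F}c$ is also the centre of $\widetilde{L}(\mathfrak{g}_{\mathbb{R}}, \sigma)$: an element $f + \lambda c$ is central precisely when $[f, g]_0 = 0$ pointwise for all $g \in L$, which forces $f(t)$ into the (trivial) centre of the semisimple $\mathfrak{g}$, hence $f = 0$. As the centre is characteristic, the automorphism $\widetilde{\varphi}$ from part~(1) satisfies $\widetilde{\varphi}(\mathbb{F}c) = \mathbb{F}c$ and therefore descends to an automorphism of the quotient $\widetilde{L}(\mathfrak{g}_{\mathbb{R}}, \sigma)/\mathbb{F}c$. Finally I would identify this quotient with $L(\mathfrak{g}_{\mathbb{R}}, \sigma)$ through the projection $f + \lambda c \mapsto f$, which is a surjective Lie algebra homomorphism with kernel $\mathbb{F}c$ (the bracket $[f,g]_0 + \omega(f,g)c$ on $\widetilde{L}$ projects to the pointwise bracket $[f,g]_0$ on $L$); the induced automorphism of $L(\mathfrak{g}_{\mathbb{R}}, \sigma)$ is the desired $\varphi$. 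It is essential to pass through $\widetilde{L}$ here, since $\mathbb{F}c \oplus \mathbb{F}d$ is \emph{not} an ideal of $\widehat{L}$ (because $[d, f] = f' \in L$), so $L$ cannot be realized as a quotient of $\widehat{L}$ directly.

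The step I expect to be the genuine obstacle is securing the two identifications $\widetilde{L} = [\widehat{L}, \widehat{L}]$ and $Z(\widetilde{L}) = \mathbb{F}c$ in exactly the generality the lemma is stated. Both are immediate when $\mathfrak{g}$ is semisimple, but for a merely reductive $\mathfrak{g} = \mathfrak{g}_a \oplus \mathfrak{g}_s$ the Abelian factor $\mathfrak{g}_a$ behaves differently: its pointwise bracket vanishes, so $\mathfrak{g}_a$ contributes extra central elements and alters the derived algebra. That case would have to be handled separately (or excluded by the standing hypotheses), and it is where the bookkeeping, rather than the idea, becomes delicate.
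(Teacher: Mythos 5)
Your argument is correct, but it takes a genuinely different route from the paper's. The paper disposes of this lemma in one line: it restricts $\widehat{\varphi}$ and cites Theorem 3.4 of~\cite{Heintze09}, which is quoted shortly afterwards as Theorem~\ref{theorem:Heintze_Gross}. That structure theorem puts any automorphism in the normal form $\widehat{\varphi}c=\epsilon_{\varphi}c$, $\widehat{\varphi}d=\epsilon_{\varphi}d-\epsilon_{\varphi}f_{\varphi}+\gamma c$, $\widehat{\varphi}f=\varphi(f)+\mu(f)c$, from which both claims are read off at a glance: $\widehat{\varphi}$ visibly preserves $L(\mathfrak{g}_{\mathbb{R}},\sigma)\oplus\mathbb{F}c=\widetilde{L}(\mathfrak{g}_{\mathbb{R}},\sigma)$, and the component map $\varphi$ is the induced loop-algebra automorphism. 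You replace this external classification input by an intrinsic characteristic-subalgebra argument: $\widetilde{L}=[\widehat{L},\widehat{L}]$ gives part (1), and $Z(\widetilde{L})=\mathbb{F}c$ together with the identification $\widetilde{L}/\mathbb{F}c\cong L$ gives part (2); your observation that one must pass through $\widetilde{L}$ because $\mathbb{F}c\oplus\mathbb{F}d$ is not an ideal of $\widehat{L}$ is exactly right. Your route is more elementary and self-contained. What the paper's route buys, beyond the bare statement, is the explicit form of the induced maps --- in particular the constant $\epsilon_{\varphi}=\pm 1$, which the paper needs immediately afterwards to define automorphisms of first and second type; your construction produces $\varphi$ but no normal form for it.

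The caveat you flag is real --- the lemma sits in a section where $\mathfrak{g}$ may be reductive --- but it can be repaired inside your framework rather than treated as a separate case. For $\mathfrak{g}=\mathfrak{g}_a\oplus\mathfrak{g}_s$ one computes $[\widehat{L},\widehat{L}]=L(\mathfrak{g}_s,\sigma_s)\oplus\{\text{zero-mean loops in } L(\mathfrak{g}_a)\}\oplus\mathbb{F}c$ (pointwise brackets vanish on the Abelian factor, and derivatives of periodic functions have zero mean), while $Z(\widehat{L})=\{\text{constant loops in } \mathfrak{g}_a\}\oplus\mathbb{F}c$. Both subalgebras are characteristic; their sum is all of $\widetilde{L}$ (every Abelian loop is a constant plus a zero-mean loop, and a zero-mean loop is the derivative of its periodic primitive), and their intersection is exactly $\mathbb{F}c$. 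Hence $\widetilde{L}$ and $\mathbb{F}c$ remain characteristic in the reductive case, and both parts of your argument go through. Note that the paper's one-line citation faces the same question of generality, since the quoted theorem of Heintze--Gro\ss{} is established in the (semi)simple setting; so spelling out this repair is a genuine improvement rather than mere bookkeeping.
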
 
\end{samepage}

\begin{proof}
Restrict the automorphism $\widehat{\varphi}$ to the subalgebra - see~\cite{Heintze09}, theorem 3.4.\,.
\end{proof}

\noindent For loop algebras of real or complex type we have the following decomposition results:

\begin{lemma}
\label{decomposition of loop algebras}
Let $\mathfrak{g}=\mathfrak{g}_s \oplus \mathfrak{g}_a$ and $(\mathfrak{g_s}, \sigma):= \bigoplus_i (\mathfrak{g_i}, \sigma_i)$, where $(\mathfrak{g_i}, \sigma_i)$ is a simple Lie algebra of the compact type together with the restriction $\sigma_i$ of $\sigma$ to $\mathfrak{g}_i$.
Let $L(\mathfrak g, \sigma)$ be the associated loop algebra and $\varphi$ a finite order automorphism of $L(\mathfrak g, \sigma)$. 
Then
\begin{enumerate}
\item $\varphi\left(L(\mathfrak{g}_a, \sigma)\right)=L(\mathfrak{g}_a, \sigma)$ and $\varphi\left(L(\mathfrak{g}_s, \sigma)\right)=L(\mathfrak{g}_s, \sigma)$.
\item $L(\mathfrak{g}_s, \sigma)$ decomposes under the action of $\varphi$ into $\varphi$-invariant ideals of two types:
   \begin{enumerate}
			\item Loop algebras of simple Lie algebras $L(\mathfrak{g}_i,\sigma_i)$ together with an automorphism $\varphi_i$ (called ``type $I$-factors''),
			\item Loop algebras of products of simple Lie algebras $\mathfrak{g_i}=\oplus_{i=1}^m\mathfrak{g_i}'$ together with an automorphism $\varphi_i$ of order $n$, cyclically interchanging the $m$-factors (called ``type $II$-factors''). In this case $\frac{n}{m}=k\in \mathbb{Z}$, and $\sigma$ induces an automorphism of order $k$ on each simple factor.
   \end{enumerate}
\end{enumerate}
\end{lemma}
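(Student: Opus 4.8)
The plan is to prove the two parts by purely structural (characteristic-subalgebra) arguments, reducing everything to the ideal theory of a single loop algebra of a simple Lie algebra. Throughout I would exploit that the bracket on $L(\mathfrak{g},\sigma)$ is pointwise, so that the fibrewise structure of $\mathfrak{g}$ is visible.

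For part (1) I would characterise the two summands \emph{intrinsically}, so that their invariance under any automorphism is automatic. Since $\mathfrak{g}$ is reductive with semisimple part $\mathfrak{g}_s$ and centre $\mathfrak{g}_a$, an $f\in L(\mathfrak{g},\sigma)$ is central if and only if $[f(t),X]=0$ for all $X\in\mathfrak{g}$ and all $t$; as every value $X\in\mathfrak{g}$ is attained by some admissible loop at a prescribed point $t_0$ (true for each regularity class), this forces $f(t)\in Z(\mathfrak{g})=\mathfrak{g}_a$. Hence $Z\bigl(L(\mathfrak{g},\sigma)\bigr)=L(\mathfrak{g}_a,\sigma)$, which is therefore $\varphi$-invariant. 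Dually, $[\mathfrak{g},\mathfrak{g}]=\mathfrak{g}_s$ gives $[L(\mathfrak{g},\sigma),L(\mathfrak{g},\sigma)]=L(\mathfrak{g}_s,\sigma)$; being the derived algebra it is characteristic, hence also $\varphi$-invariant. This settles part (1).

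For part (2) I would restrict $\varphi$ to $L(\mathfrak{g}_s,\sigma)=\bigoplus_i L(\mathfrak{g}_i,\sigma_i)$ and show $\varphi$ permutes the factors. The key input is the ideal structure of a single factor: for $\mathfrak{g}_i$ simple, every ideal of $L(\mathfrak{g}_i,\sigma_i)$ has the form $\mathfrak{g}_i\otimes\mathfrak{a}$ for an ideal $\mathfrak{a}$ of the underlying (twist-equivariant) ring of coefficient functions; since that ring has no nontrivial idempotents, $L(\mathfrak{g}_i,\sigma_i)$ is directly indecomposable as a sum of ideals, and every nonzero ideal $P$ is perfect, $[P,L(\mathfrak{g}_i,\sigma_i)]=P$. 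Now let $J:=\varphi\bigl(L(\mathfrak{g}_i,\sigma_i)\bigr)$, again an indecomposable ideal. Writing $\pi_k$ for the projection onto the $k$-th factor, one checks that $\pi_k(J)$ is an ideal of $L(\mathfrak{g}_k,\sigma_k)$ and that $[\pi_k(J),L(\mathfrak{g}_k,\sigma_k)]\subseteq J\cap L(\mathfrak{g}_k,\sigma_k)\subseteq\pi_k(J)$; perfectness then forces $J\cap L(\mathfrak{g}_k,\sigma_k)=\pi_k(J)$ for every $k$, whence $J=\bigoplus_k\bigl(J\cap L(\mathfrak{g}_k,\sigma_k)\bigr)$. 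Indecomposability of $J$ yields $J\subseteq L(\mathfrak{g}_k,\sigma_k)$ for a single $k$, and applying the same reasoning to $\varphi^{-1}$ upgrades this to $\varphi\bigl(L(\mathfrak{g}_i,\sigma_i)\bigr)=L(\mathfrak{g}_k,\sigma_k)$. Thus $\varphi$ permutes the simple loop factors. Organising this permutation into orbits, a length-$1$ orbit is a $\varphi$-invariant factor $L(\mathfrak{g}_i,\sigma_i)$ (a \emph{type I}-factor); a length-$m$ orbit gives the $\varphi$-invariant ideal $\bigoplus_{j=0}^{m-1}\varphi^{j}\bigl(L(\mathfrak{g}_{i_0},\sigma_{i_0})\bigr)$, the loop algebra of a sum of $m$ isomorphic simple summands cyclically interchanged by $\varphi$ (a \emph{type II}-factor). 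Since $\varphi$ induces an $m$-cycle there, its order $n$ is divisible by $m$; setting $k:=n/m$, the map $\varphi^{m}$ preserves each simple summand with order $k$, and unwinding $\varphi^{m}$ against the defining period identifies $k$ with the order of the induced $\sigma$ on each simple factor.

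The main obstacle is the structural input driving the permutation step: establishing that the ideals of $L(\mathfrak{g}_i,\sigma_i)$ are exactly the $\mathfrak{g}_i\otimes\mathfrak{a}$ (hence indecomposability and perfectness of ideals), \emph{uniformly} across the twisted case and across the various regularity completions, where the relevant coefficient ring must be identified and shown to have no nontrivial idempotents. The secondary delicate point is the order bookkeeping for type II-factors, namely extracting $n=mk$ and matching $k$ with the order of $\sigma$, which is best handled by invoking the explicit description of finite-order automorphisms of loop algebras (as in~\cite{Heintze09}) to control $\varphi^{m}$ on a single summand. Everything else is the routine pointwise verification that brackets, projections, and the centre behave as in the finite-dimensional fibre $\mathfrak{g}$.
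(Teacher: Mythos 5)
Your proposal is correct, and although it shares the paper's two-step skeleton (first separate the Abelian from the semisimple part, then analyse the action on the simple loop summands), it justifies both steps by genuinely different arguments. For (1) the paper runs a case analysis on whether $\varphi$ mixes $L(\mathfrak{g}_a,\sigma)$ and $L(\mathfrak{g}_s,\sigma)$, needs the finite order of $\varphi$ to show the two bad cases coincide, and excludes them by comparing brackets; you instead characterise $L(\mathfrak{g}_a,\sigma_a)$ as the centre and $L(\mathfrak{g}_s,\sigma_s)$ as the derived algebra of $L(\mathfrak{g},\sigma)$, so both are characteristic and invariant under \emph{any} automorphism --- cleaner, and finite order plays no role in this step. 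For (2) the paper picks a minimal $\varphi$-invariant decomposition of $\mathfrak{g}_s$ and then essentially asserts that inside a non-simple piece the simple loop summands are pairwise of the same type and cyclically permuted; your projection argument (ideals of $L(\mathfrak{g}_i,\sigma_i)$ are of the form $\mathfrak{g}_i\otimes\mathfrak{a}$, hence nonzero ideals are perfect and the algebra is directly indecomposable, forcing $\varphi\bigl(L(\mathfrak{g}_i,\sigma_i)\bigr)=L(\mathfrak{g}_k,\sigma_k)$) supplies exactly the justification the paper glosses over, and your orbit bookkeeping $n=mk$ then agrees with the paper's conclusion. The price of your route is the structural input you flag yourself: the classification of ideals of a possibly twisted simple loop algebra, uniformly over the regularity classes, where the coefficient ring of equivariant functions must be identified and connectedness of the circle invoked to exclude nontrivial idempotents. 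This is a known (if not entirely trivial) fact, so what remains is a matter of citation rather than substance; the paper instead leans implicitly on the Heintze--Gro\ss{} description of automorphisms of loop algebras, which in your argument is only needed for the final order count on a single summand. One small reading note: in the statement, the clause ``$\sigma$ induces an automorphism of order $k$'' is a slip for the automorphism $\bar\varphi$ that $\varphi$ (via $\varphi^{m}$) induces on each simple factor, as the paper's own proof makes clear; your interpretation through $\varphi^{m}$ is the correct one.
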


\begin{proof}
\begin{enumerate}
\item Let $\mathfrak{g}=\mathfrak{g}_a \oplus \mathfrak{g}_s$. As remarked, a function $f:\mathbb{R}\longrightarrow \mathfrak{g}$ has a unique decomposition $f:=(f_a, f_s)$, such that $f_s: \mathbb{R}\rightarrow \mathfrak{g}_s$ and $f_a: \mathbb{R}\rightarrow \mathfrak{g}_a$. As $f(t+2\pi)=\sigma f(t)$ is equivalent to $f_a(t+2\pi)=\sigma f_a(t)=f_a(t)$ and $f_s(t+2\pi)=\sigma_s f_s(t)$ and $\sigma|_{\mathfrak{g}_a}=\Id$ this induces the decomposition: $L(\mathfrak{g}, \sigma)=L (\mathfrak{g}_a, \Id) \oplus L(\mathfrak{g}_s, \sigma_s)$. 
Let now $$\varphi: L(\mathfrak{g}, \sigma)\rightarrow L(\mathfrak{g}, \sigma)$$ be an automorphism. There are three (not mutually exclusive) possibilities:
 \begin{itemize}
 \item {\bf First case:}  $\varphi(L(\mathfrak{g}_a, \sigma))\subset L(\mathfrak{g}_a, \sigma)$ and $\varphi(L(\mathfrak{g}_s, \sigma))\subset L(\mathfrak{g}_s, \sigma)$. 
 \item {\bf Second case:} $\varphi(L(\mathfrak{g}_a, \sigma))\not\subset L(\mathfrak{g}_a, \sigma)$. 
  \item {\bf Third case:} $\varphi(L(\mathfrak{g}_s, \sigma))\not\subset L(\mathfrak{g}_s, \sigma)$. 
\end{itemize}
\begin{enumerate}  
\item In the {\bf first case}, we are done. 
\item {\bf We show: The second case and the third case are equivalent}:
     \begin{itemize}
     \item  {\bf We show: The second case includes the third case.} Assume, we are in the second case. By assumption we have $\varphi(L(\mathfrak{g}_a, \sigma))\cap L(\mathfrak{g}_{s}, \sigma)\not=\emptyset$. 
     Let $L(\mathfrak{g}_a, \sigma)=\mathcal{G}_a^1\oplus \mathcal{G}_a^2$ such that $\varphi(\mathcal{G}_{a}^{1})\subset L(\mathfrak{g}_{a}, \sigma)$ and $\varphi(\mathcal{G}_a^2)\subset L(\mathfrak{g}_s, \sigma)$. As $L(\mathfrak{g}_a, \sigma)$ and $L(\mathfrak{g}_s, \sigma)$ are ideals in $L(\mathfrak{g}_a, \sigma)$ we get, that $\mathcal{G}_a^1$ and $\mathcal{G}_a^2$ are ideals in $L(\mathfrak{g}_{a}, \sigma)$. Decompose $L(\mathfrak{g}_{s}, \sigma)=:\mathcal{G}_s^1\oplus\mathcal{G}_s^2$ such that $\mathcal{G}_s^2=\varphi(L(\mathfrak{g}_s, \sigma))$ and $\mathcal{G}_s^1=\varphi(L(\mathfrak{g}_a, \sigma))$. Then $\varphi(\mathcal{G}_{a}^{2})=\mathcal{G}_{s}^{2}$. As $\varphi$ has finite order, i.e. $\varphi^n=\Id$, it follows that $\varphi^{n-1}\mathcal{G}_s^2=\mathcal{G}_{a}^{2}$ and hence $\varphi(L(\mathfrak{g}_s, \sigma))\not\subset L(\mathfrak{g}_s, \sigma)$. Thus the second case includes the third case.
      \item {\bf  We show: The second case includes the third case.} This follows similar to the opposite direction.
      \end{itemize}
 In consequence the conditions 
      $\varphi(L(\mathfrak{g}_a, \sigma))\not\subset L(\mathfrak{g}_a, \sigma)$ and 
 $ \varphi(L(\mathfrak{g}_s, \sigma))\not\subset L(\mathfrak{g}_s, \sigma)$ are equivalent.

      \item {\bf We prove, that the third case is impossible.} To this end remark that $\left[L{\mathfrak{g}_s,\sigma}, L{\mathfrak{g}_s,\sigma}\right]=L{\mathfrak{g}_s,\sigma}$ while $\left[L{\mathfrak{g}_a,\sigma}, L{\mathfrak{g}_a,\sigma}\right]=0$. As  $\varphi$ is a Lie algebra homomorphism, it follows, that it maps brackets onto brackets. Hence $\varphi(L(\mathfrak{g}_s, \sigma))\cap L(\mathfrak{g}_s, \sigma)=\emptyset$. 
 \end{enumerate}

\item Let now $\mathfrak{g}_s=\bigoplus_{i=1}^m\mathfrak{g}_i$ be a decomposition of $\mathfrak{g}_s$ such that  
\begin{enumerate}	
 \item $\mathfrak{g}_i$ is invariant under $\varphi = \varphi|_{\mathfrak{g}_i}$.
 \item There is no decomposition $\mathfrak{g}_i=\mathfrak{g}'_i \oplus \mathfrak{g}''_i$ such that  $\varphi|_{\mathfrak{g}'_i}$ and $\varphi|_{\mathfrak{g}''_i}$ are automorphisms and $\mathfrak{g}'_i$ and $\mathfrak{g}''_i$ are invariant under the bracket operation.
\end{enumerate}

Again $f_s$ splits into $m$ component functions $f_s=(f_1, \dots, f_m)$ and the compatibility condition $f_s(t+2\pi)=\sigma f_s(t)$ is equivalent to the $m$ compatibility conditions $f_i(t+2\pi)=\sigma f_i(t), i=1, \dots, m$.

\noindent There are now two cases:
   \begin{enumerate}
   \item Suppose first, $\mathfrak{g}_i$ is simple. Then $\varphi_i$ is an involution of $L(\mathfrak{g}_i, \sigma_i)$. The pair 
      \begin{displaymath}\left(L(\mathfrak{g}_i, \sigma_i), \varphi_i\right)
      \end{displaymath} is of type $I$. 
     The finite order automorphisms of simple affine geometric Kac-Moody algebras are completely classified (see~\cite{Heintze09}).
   \item Suppose now $\mathfrak{g}_i$ is not simple and $\varphi|_{L(\mathfrak{g}_i,\sigma_i)}$ is of order $n$. There is a decomposition
         $\mathfrak{g}_i:=\bigoplus_j \mathfrak{g}_i^j$ such that $\mathfrak{g}_i^j$ is a simple Lie algebra. As there is no subalgebra
          $L(\mathfrak{h},\sigma_i|_{\mathfrak{h}})\subset L(\mathfrak{g}_i,\sigma_i)$ which is both invariant under $\varphi_{\mathfrak{g}_i}$ and an
           ideal in $L(\mathfrak{g}, \sigma)$, we find that all $(\mathfrak{g}_i^j, \sigma_i^j)$ are of the same type and the algebras $L(\mathfrak{g}_i^j, \sigma_i^j)$ are permuted by $\varphi_{\mathfrak{g}_i}$. Thus the number of those factors, denoted $m$, is a divisor of $n=km$.
           We get $$L(\mathfrak{g}_i, \sigma_i):=\bigoplus_{j=1}^m L(\mathfrak{g}_j,\sigma_j)\,.$$
$\varphi$ induces an automorphism $\bar{\varphi}$ of order $k$ on each simple factor. $\bar{\varphi}$ is again a finite order automorphism of a simple geometric affine Kac-Moody algebra. \qedhere
    \end{enumerate}
\end{enumerate}
\end{proof}

Using the classification result of E.\ Heintze and C.\ Gro\ss\ in~\cite{Heintze09} we know that every automorphism of the loop algebra $L(\mathfrak{g},\sigma)$ is of standard form
\begin{displaymath}\varphi\left( f(t)\right)=\varphi_t f(\lambda(t))\,.\end{displaymath}
Here $\varphi(t)$ denotes a curve of automorphisms of $\mathfrak{g}$ and $\lambda:\mathbb{R}\longrightarrow \mathbb{R}$ is a smooth function. Not all such automorphisms are extendible to the affine Kac-Moody algebra associated to $L(\mathfrak{g}, \sigma)$. We quote theorem 3.4.\ of~\cite{Heintze09}:

\begin{theorem}[Heintze-Gro\ss, 09]
\label{theorem:Heintze_Gross}
Let $\widehat{\varphi}:\widehat{L}(\mathfrak{g}, \sigma)\longrightarrow \widehat{L}(\widetilde{\mathfrak{g}}, \widetilde{\sigma})$  be a linear or conjugate linear map. Then $\widehat{\varphi}$ is an isomorphism of Lie algebras iff there exists $\gamma\in \mathbb{F}$ and a linear (resp.\ conjugate linear) isomorphism $\varphi:L(\mathfrak{g}, \sigma)\longrightarrow L(\widetilde{\mathfrak{g}}, \widetilde{\sigma})$ with $\lambda'=\epsilon_{\varphi}$ constant such that
\begin{alignat*}{2}
\widehat{\varphi}c&= \epsilon_{\varphi}c\\
\widehat{\varphi}d&=\epsilon_{\varphi}d-\epsilon_{\varphi} f_{\varphi}+\gamma c\\
\widehat{\varphi}f&=\varphi(u)+\mu(f) c\,.
\end{alignat*}
\end{theorem}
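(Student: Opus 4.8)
For the final statement (Theorem~\ref{theorem:Heintze_Gross}) I would prove both implications, the substance lying in the ``only if'' direction; the ``if'' direction is a direct check that the three displayed formulas respect every defining bracket of Example~\ref{geometricaffinekacmoodyalgebra}. Throughout I take $\mathfrak{g}$ semisimple, which is the relevant case (the abelian factor is handled separately by the decomposition lemmas preceding this theorem).

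For the forward direction I would first locate the two canonical ideals that any isomorphism must respect. Since $\mathfrak{g}$ is semisimple we have $[L(\mathfrak{g},\sigma),L(\mathfrak{g},\sigma)]=L(\mathfrak{g},\sigma)$, and a short computation with the bracket of Example~\ref{geometricaffinekacmoodyalgebra} shows that the center of $\widehat{L}(\mathfrak{g},\sigma)$ is exactly $\mathbb{F}c$ and that the derived algebra equals $\widetilde{L}(\mathfrak{g},\sigma)=L(\mathfrak{g},\sigma)\oplus\mathbb{F}c$. Both are preserved by any Lie algebra isomorphism, so $\widehat{\varphi}(c)=\epsilon_{\varphi}c$ for a nonzero scalar $\epsilon_{\varphi}$, and $\widehat{\varphi}(\widetilde{L})=\widetilde{L}$. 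Passing to the quotient $\widetilde{L}/\mathbb{F}c\cong L(\mathfrak{g},\sigma)$ (Lemma~\ref{lemma_reduction_of_isomorphism_to_loop_algebra}), $\widehat{\varphi}$ induces a linear (resp.\ conjugate linear) isomorphism $\varphi:L(\mathfrak{g},\sigma)\to L(\widetilde{\mathfrak{g}},\widetilde{\sigma})$ of loop algebras. Reading elements in the splitting $L\oplus\mathbb{F}c\oplus\mathbb{F}d$, this forces the shape $\widehat{\varphi}(f)=\varphi(f)+\mu(f)c$ and $\widehat{\varphi}(d)=\alpha d+h+\gamma c$ for a linear functional $\mu:L\to\mathbb{F}$, a loop $h\in L$, and scalars $\alpha,\gamma$.

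Next I would feed in the loop-algebra classification quoted after Lemma~\ref{decomposition of loop algebras}: every such $\varphi$ has standard form $\varphi(f)(t)=\varphi_{t}f(\lambda(t))$ with $\varphi_{t}\in\mathrm{Aut}(\mathfrak{g})$ preserving the invariant form and $\dot{\varphi}_{t}\varphi_{t}^{-1}=\mathrm{ad}(f_{\varphi}(t))$ for a loop $f_{\varphi}$ (here I use that every derivation of $\mathfrak{g}$ is inner). The decisive step is to apply $\widehat{\varphi}$ to $[d,f]=f'$. Computing $(\varphi f)'(t)=[f_{\varphi}(t),\varphi_{t}f(\lambda(t))]+\lambda'(t)\varphi_{t}f'(\lambda(t))$ and comparing the loop parts of $\widehat{\varphi}(f')=[\widehat{\varphi}(d),\widehat{\varphi}(f)]$, the coefficient of $f'(\lambda(t))$ gives $\alpha\lambda'(t)\equiv 1$, so $\lambda'$ is constant, while the coefficient of $f(\lambda(t))$ forces $h(t)+\alpha f_{\varphi}(t)$ to be central in $\mathfrak{g}$, hence $h=-\alpha f_{\varphi}$. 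Because $\lambda$ must be a reparametrization compatible with the periodicity/twist condition and $\lambda'$ is constant, $\lambda'=\epsilon_{\varphi}\in\{\pm 1\}$ and $\alpha=1/\epsilon_{\varphi}=\epsilon_{\varphi}$; this is precisely the dichotomy between isomorphisms of the first and second kind. Finally, applying $\widehat{\varphi}$ to $[f,g]=[f,g]_{0}+\omega(f,g)c$ and substituting $s=\lambda(t)$ in the integral defining $\omega$, constancy of $\lambda'$ yields $\omega(\varphi f,\varphi g)=\epsilon_{\varphi}\,\omega(f,g)+\mu([f,g]_{0})$, which both recovers $\widehat{\varphi}(c)=\epsilon_{\varphi}c$ and pins down $\mu$ in terms of $\varphi$ and the $\mathrm{ad}$-invariant form; the scalar $\gamma$ stays free since $c$ is central. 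For the converse I would verify that the three displayed formulas, for any $\gamma\in\mathbb{F}$ and any loop isomorphism $\varphi$ with $\lambda'=\epsilon_{\varphi}$ constant, are compatible with each defining bracket (these are the same computations read in reverse), bijectivity following from that of $\varphi$ and the triangular shape of $\widehat{\varphi}$ in the basis $L\oplus\mathbb{F}c\oplus\mathbb{F}d$.

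I expect the main obstacle to be the change-of-variables in the cocycle identity: reducing $\omega(\varphi f,\varphi g)$ to a multiple of $\omega(f,g)$ needs both $\mathrm{ad}$-invariance of $\langle\cdot,\cdot\rangle$ (to absorb the $\dot{\varphi}_{t}$ contribution into $\mu([f,g]_{0})$) and constancy of $\lambda'$ (to make $s=\lambda(t)$ rescale the integral cleanly). It is exactly here that the a priori independent scalars $\epsilon_{\varphi}$ and $\lambda'$ are forced to coincide. The other nontrivial ingredient, that every finite-order loop-algebra automorphism has the form $\varphi_{t}f(\lambda(t))$, is supplied by the cited classification.
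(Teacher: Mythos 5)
The first thing to know is that the paper offers no proof of this theorem at all: it is imported verbatim from the literature, introduced by the sentence ``We quote theorem 3.4.\ of~\cite{Heintze09}'', after which the paper moves straight on to the first-type/second-type terminology. So there is no internal proof to compare yours against; the only meaningful comparison is with the argument in Heintze--Gro\ss{} itself, and your blind reconstruction follows that argument faithfully in outline. Your skeleton is the right one: for $\mathfrak{g}$ semisimple the center of $\widehat{L}(\mathfrak{g},\sigma)$ is $\mathbb{F}c$ and the derived algebra is $\widetilde{L}(\mathfrak{g},\sigma)=L(\mathfrak{g},\sigma)\oplus\mathbb{F}c$; any isomorphism preserves both, hence scales $c$ and descends through $\widetilde{L}/\mathbb{F}c\cong L(\mathfrak{g},\sigma)$ to a loop-algebra isomorphism $\varphi$; the standard form $\varphi(f)(t)=\varphi_t f(\lambda(t))$ --- which the paper quotes from~\cite{Heintze09} immediately before the theorem, so it is legitimately available to you --- converts $\widehat{\varphi}([d,f])=[\widehat{\varphi}d,\widehat{\varphi}f]$ into $\alpha\lambda'\equiv 1$ and $h=-\alpha f_{\varphi}$, and then $\lambda'=\pm 1=\alpha$, which is the first/second kind dichotomy. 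This is, up to presentation, the proof of the cited source rather than a genuinely different route, which is the best one can hope for here.

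Two details in your write-up are compressed and would need to be supplied. First, periodicity alone does not pin down $\lambda'$: a constant-slope reparametrization with $\lambda'=m\in\mathbb{Z}$, $|m|\geq 2$, is compatible with the twist condition but fails surjectivity; the clean way to exclude it is to apply the same constraint to $\varphi^{-1}$, whose reparametrization has slope $1/\lambda'$, forcing $\lambda'=\pm 1$. Second, and more substantively, the step you yourself single out as the crux is not quite closed as stated. Comparing the $c$-components of $\widehat{\varphi}([f,g])=[\widehat{\varphi}f,\widehat{\varphi}g]$ with the change-of-variables computation of $\omega(\varphi f,\varphi g)$ yields $(\epsilon-\lambda')\,\omega(f,g)=(\mu'-\mu)([f,g]_0)$, where $\widehat{\varphi}(c)=\epsilon c$ and $\mu'$ is the functional produced by the $f_{\varphi}$-term via $\mathrm{ad}$-invariance. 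To conclude $\epsilon=\lambda'$ you must use that $\omega$ is not a coboundary; concretely, exhibit $f,g$ with $[f,g]_0=0$ but $\omega(f,g)\neq 0$, for instance $f=x\sin t$ and $g=x\cos t$ with $x$ fixed by $\sigma$ and $\langle x,x\rangle\neq 0$, which gives $\omega(f,g)=-\tfrac{1}{2}\langle x,x\rangle$. With these two points added, your argument is complete and correct.
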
 

Following~\cite{Heintze09} we call $\widehat{\varphi}$ of ``first type'' if $\epsilon_{\varphi}=1$ and of ``second type'' if $\epsilon_{\varphi}=-1$.
Similarly for geometric affine Kac-Moody algebras only those two types exist: We have the following result:

\begin{theorem}\
\label{theorem:admissible_locally_admissible}
\begin{enumerate}
\item Every involution $\varphi_i$ is of the first type or the identity. Then $\varphi$ is called of first kind.
\item Every involution $\varphi_i$ is of the second type. Then $\varphi$ is called of second kind.
\end{enumerate}
\end{theorem}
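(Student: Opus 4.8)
```latex
The plan is to reduce the statement about involutions of the geometric affine Kac-Moody algebra $\widehat{L}(\mathfrak{g},\sigma)$ to a statement about the individual factors produced by the decomposition results already established, and then to invoke the Heintze-Gro\ss\ dichotomy (Theorem~\ref{theorem:Heintze_Gross}) on each factor. First I would observe that an involution $\widehat{\varphi}$ of $\widehat{L}(\mathfrak{g},\sigma)$ restricts, by Lemma~\ref{lemma_reduction_of_isomorphism_to_loop_algebra}, to an automorphism of the derived algebra $\widetilde{L}(\mathfrak{g},\sigma)$ and to an automorphism $\varphi$ of the loop algebra $L(\mathfrak{g},\sigma)$. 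Since $\widehat{\varphi}$ is an involution, $\varphi$ is a finite order automorphism of $L(\mathfrak{g},\sigma)$, so Lemma~\ref{decomposition of loop algebras} applies: $\varphi$ preserves the splitting $L(\mathfrak{g},\sigma)=L(\mathfrak{g}_a,\sigma)\oplus L(\mathfrak{g}_s,\sigma)$ and decomposes $L(\mathfrak{g}_s,\sigma)$ into $\varphi$-invariant ideals of type $I$ and type $II$. On each such invariant ideal the restriction $\varphi_i$ is a finite order automorphism of a (simple or permutation-type) geometric affine Kac-Moody factor.

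Next I would apply Theorem~\ref{theorem:Heintze_Gross} to each factor. That theorem shows every isomorphism $\widehat{\varphi}$ of the affine extension carries an invariant $\epsilon_{\varphi}$ with $\widehat{\varphi}c=\epsilon_{\varphi}c$, where $\epsilon_{\varphi}=\lambda'$ is constant. For an involution we have $\widehat{\varphi}^2=\Id$, hence $\epsilon_{\varphi}^2=1$, forcing $\epsilon_{\varphi}\in\{+1,-1\}$ on each factor. By the Heintze-Gro\ss\ terminology the case $\epsilon_{\varphi}=1$ is the first type and $\epsilon_{\varphi}=-1$ is the second type; the degenerate case where $\varphi_i$ acts trivially is the identity. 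Thus on each factor $\varphi_i$ is individually of the first type, the second type, or the identity. This already establishes that only these possibilities occur factorwise.

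The remaining point — and the main obstacle — is to show that the \emph{global} behavior is uniform in the sense stated: either all $\varphi_i$ are of first type (or identity), giving an involution of first kind, or all $\varphi_i$ are of second type, giving one of second kind; there is no genuinely mixed case producing a third option. The key observation is that $\epsilon_{\varphi}=\lambda'$ is determined by the reparametrization $\lambda$ of the loop variable, which is \emph{global} data: a single automorphism $\widehat{\varphi}$ of $\widehat{L}(\mathfrak{g},\sigma)$ induces one ambient map $d\mapsto \epsilon_{\varphi}d-\epsilon_{\varphi}f_{\varphi}+\gamma c$ on the derivation element $d$, and this $\epsilon_{\varphi}$ is shared by all factors. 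Hence I would argue that the factorwise signs $\epsilon_{\varphi_i}$ must all agree with the global $\epsilon_{\varphi}$, so they cannot mix: if one factor is genuinely of second type then $\epsilon_{\varphi}=-1$ globally and every non-identity factor is of second type. The subtlety requiring care is the interaction of the type $II$ (permutation) factors with this global sign: for a type $II$ factor the automorphism cyclically permutes simple summands, and I must verify that the induced action still respects the single global $\epsilon_{\varphi}$ and that its reduction $\bar{\varphi}$ of order $k$ on a simple summand inherits the same type, so that the permutation structure does not secretly allow a mixed sign.

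Finally I would assemble these pieces: the factorwise dichotomy from Theorem~\ref{theorem:Heintze_Gross} together with the uniformity of $\epsilon_{\varphi}$ across all factors yields exactly the two global alternatives in the statement. I expect the bookkeeping for the type $II$ factors to be where the argument needs the most care, since one must check that cyclic permutation of simple factors is compatible with a single constant $\lambda'$ and does not introduce an intermediate type; everything else follows formally from the already-quoted classification and the involution condition $\epsilon_{\varphi}^2=1$.
```
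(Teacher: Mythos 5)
Your proposal is correct and follows essentially the same route as the paper: restrict to the loop algebra, decompose into $\varphi$-invariant factors via Lemma~\ref{decomposition of loop algebras}, apply Theorem~\ref{theorem:Heintze_Gross} to get a sign $\epsilon_{\varphi_i}\in\{\pm 1\}$ on each factor, and conclude that the single shared central element $c$ and derivation $d$ force all these signs to agree with one global $\epsilon_{\varphi}$, excluding any mixed case. The paper packages this last step in the language of locally admissible versus admissible involutions (Lemma~\ref{lemma:loc_admissible_global_admissible}, uniqueness of each factor's extension), but the mechanism is identical to your direct argument.
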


\begin{remark}
It is important to note that on every simple factor of a loop algebra we can choose any automorphism we want, especially it is possible to use any locally admissible automorphism --- i.e.\ the identity, automorphisms of first type and automorphism of second type --- simultaneously. This result shows that this does no longer hold for indecomposable (perhaps reducible) Kac-Moody algebras.
\end{remark}

Let $\mathfrak{g}$ be a reductive Lie algebra, $\sigma$ a diagram automorphism and $L(\mathfrak{g}, \sigma)$ the associated (twisted) loop algebra.
 
Call an involution $\varphi$ of $L(\mathfrak {g}, \sigma)$ \emph{locally admissible} iff its restriction to any irreducible factor can be extended to the associated affine Kac-Moody algebra; call it \emph{admissible} iff it can be extended to $\widehat{L}(\mathfrak {g}, \sigma)$. Remark that admissible involutions are locally admissible, but the converse fails in general.

Study now extensions to $\widehat{L}(\mathfrak{g}, \sigma)$. Let $\widehat{L}(\mathfrak{g}_i, \sigma_i)$ be any subalgebra. Assume the involution $\varphi_i$ of $L(\mathfrak{g}_i, \sigma)$ to be admissible; then it induces an involution $\widehat{\varphi}_i$ of $\widehat{L}(\mathfrak{g}_i, \sigma_i)$; $\widehat{\varphi}_i$ is unique as a consequence of the above (theorem~\ref{theorem:Heintze_Gross}) quoted result of~\cite{Heintze09}. Hence a locally admissible involution is admissible if the numbers $\epsilon_{\phi_i}$ coincide for each factor. This establishes the following lemma.

\begin{lemma} 
\label{lemma:loc_admissible_global_admissible}
A locally admissible involution of $\varphi:L(\mathfrak{g}, \sigma)\longrightarrow L(\mathfrak{g}, \sigma)$ is admissible, iff every restriction $\varphi_i:L(\mathfrak{g}_i, \sigma_i)\longrightarrow L(\mathfrak{g}_i, \sigma_i)$ has the same extension to $\widehat{L}(\mathfrak{g}_i, \sigma_i)$.
\end{lemma}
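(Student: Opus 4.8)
The plan is to prove Lemma~\ref{lemma:loc_admissible_global_admissible} by reducing the global extension problem to a compatibility condition among the per-factor extensions, exploiting the rigidity of Theorem~\ref{theorem:Heintze_Gross}. First I would recall the decomposition established in Lemma~\ref{decomposition of loop algebras}: under the finite-order automorphism $\varphi$ the loop algebra splits as $L(\mathfrak{g}, \sigma)=\bigoplus_i L(\mathfrak{g}_i, \sigma_i)$ into $\varphi$-invariant irreducible factors, with $\varphi_i:=\varphi|_{L(\mathfrak{g}_i,\sigma_i)}$. Since $\varphi$ is assumed locally admissible, each $\varphi_i$ extends to an involution $\widehat{\varphi}_i$ of the affine Kac-Moody algebra $\widehat{L}(\mathfrak{g}_i,\sigma_i)$, and by the uniqueness clause following Theorem~\ref{theorem:Heintze_Gross} this extension is uniquely determined by $\varphi_i$. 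The essential invariant attached to each extension is the scalar $\epsilon_{\varphi_i}=\lambda_i'\in\{+1,-1\}$ governing the action on the central and derivation elements via $\widehat{\varphi}_i c=\epsilon_{\varphi_i}c$ and $\widehat{\varphi}_i d=\epsilon_{\varphi_i}d-\epsilon_{\varphi_i}f_{\varphi_i}+\gamma_i c$.

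Next I would argue the forward direction (admissible $\Rightarrow$ common extension). If $\varphi$ extends to an involution $\widehat{\varphi}$ of all of $\widehat{L}(\mathfrak{g},\sigma)$, then by Theorem~\ref{theorem:Heintze_Gross} it carries a single global scalar $\epsilon_\varphi$ with $\widehat{\varphi}c=\epsilon_\varphi c$. Restricting $\widehat{\varphi}$ to each invariant subalgebra $\widehat{L}(\mathfrak{g}_i,\sigma_i)$ (using that $\widetilde{L}(\mathfrak{g}_i,\sigma_i)$ and hence $\widehat{L}(\mathfrak{g}_i,\sigma_i)$ are preserved, cf.\ Lemma~\ref{lemma:splitting}) yields an extension of $\varphi_i$; by uniqueness it must equal $\widehat{\varphi}_i$, forcing $\epsilon_{\varphi_i}=\epsilon_\varphi$ for every $i$. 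Thus all the local extensions agree on the derivation in the sense that their defining scalars coincide, which is precisely the asserted condition.

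For the converse (common extension $\Rightarrow$ admissible), I would show that matching the scalars $\epsilon_{\varphi_i}=:\epsilon$ across all factors lets one glue the $\widehat{\varphi}_i$ into a well-defined global involution. The obstruction to gluing lives entirely in the one-dimensional center $\mathbb{F}c$ and the derivation $\mathbb{F}d$: because $\widehat{L}(\mathfrak{g},\sigma)$ has a single central element $c$ and a single derivation $d$ shared by all factors (this is exactly the phenomenon recorded in the Corollary after Lemma on non-decomposition), a candidate extension $\widehat{\varphi}$ must send $c\mapsto\epsilon c$ and $d\mapsto\epsilon d-\epsilon\sum_i f_{\varphi_i}+(\sum_i\gamma_i)c$ consistently. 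When the $\epsilon_{\varphi_i}$ all equal $\epsilon$, these formulas are compatible with the bracket relations of Example~\ref{geometricaffinekacmoodyalgebra}, so $\widehat{\varphi}$ is a genuine automorphism restricting to each $\widehat{\varphi}_i$; that it is an involution follows since each $\widehat{\varphi}_i$ is and the gluing respects composition. Hence $\varphi$ is admissible.

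The main obstacle I anticipate is the converse gluing step: one must verify that the requirement $\epsilon_{\varphi_i}=\epsilon$ for all $i$ is genuinely \emph{sufficient}, i.e.\ that no further compatibility among the $f_{\varphi_i}$ and $\gamma_i$ is forced by the shared derivation $d$. The point is that $d$ acts on $L(\mathfrak{g},\sigma)$ componentwise as differentiation, so its image under $\widehat{\varphi}$ is determined separately on each factor up to the central ambiguity, and the central terms simply add; the only genuine constraint is the equality of the scalars multiplying $d$ itself, since a single $d$ cannot simultaneously be scaled by $+1$ on one factor and $-1$ on another. Making this additivity-of-central-corrections argument precise, while confirming there is no hidden cocycle incompatibility, is where the real work lies; everything else reduces to invoking the uniqueness in Theorem~\ref{theorem:Heintze_Gross}.
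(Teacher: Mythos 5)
Your proposal is correct and follows essentially the same route as the paper: the paper's (very terse) argument likewise rests on the uniqueness of each local extension $\widehat{\varphi}_i$ guaranteed by Theorem~\ref{theorem:Heintze_Gross} and on the observation that admissibility is equivalent to the scalars $\epsilon_{\varphi_i}$ coinciding across all factors, which is exactly your invariant; you merely spell out the gluing of $c$- and $d$-actions that the paper leaves implicit. The only cosmetic caveat is in your forward direction: the global $\widehat{\varphi}$ does not literally preserve $\widehat{L}(\mathfrak{g}_i,\sigma_i)$ (since $\widehat{\varphi}d$ involves $f_{\varphi_j}$ from other factors), but the scalar $\epsilon_{\varphi_i}=\epsilon_{\varphi}$ is still forced because it is read off from the shared central element $c$, so the argument stands.
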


\noindent Theorem~\ref{theorem:admissible_locally_admissible} is now a direct consequence of lemma~\ref{lemma:loc_admissible_global_admissible}.

\section{Real forms of geometric affine Kac-Moody algebras}
\label{sect:real_forms}

In this section we will study real forms of geometric affine Kac-Moody algebras. The results in this section hold for twisted and non-twisted affine Kac-Moody algebras. We write twisted affine Kac-Moody algebras and use the convention that $\sigma$ may denote the identity.

\begin{example}
Let $A$ be an affine Cartan matrix, $\mathfrak{g}_{\mathbb{C}}(A)$ be its Lie algebra realization over $\mathbb{C}$. The realization $\mathfrak{g}_{\mathbb{R}}(A)$ over $\mathbb{R}$ is a real form of $\mathfrak{g}_{\mathbb{C}}(A)$. It is the fixed point algebra under the automorphism of complex conjugation. Similarly any completion $\widehat{L}(\mathfrak{g}_{\mathbb{R}}, \sigma)$ is a real form of the corresponding completion $\widehat{L}(\mathfrak{g}_{\mathbb{C}}, \sigma)$. 
\end{example}

\noindent The following lemma is straightforward:

\begin{lemma}
\label{lemma:real_form_product}
Let $\widehat{L}(\mathfrak{g}_{\mathbb{C}}, \sigma)$ be a geometric affine Kac-Moody algebra and let $\mathfrak{g}_{\mathbb{C}}=\oplus \mathfrak{g}_{\mathbb{C},i}$. Then any real form $\widehat{L}_{\textrm{real}}(\mathfrak{g}_{\mathbb{C}}, \sigma)$ satisfies

 $$\widehat{L}_{\textrm{real}}(\mathfrak{g}_{\mathbb{C}}, \sigma)\cap \widehat{L}(\mathfrak{g}_{\mathbb{C},i}, \sigma_i)=\widehat{L}_{\textrm{real}}(\mathfrak{g}_{\mathbb{C},i}, \sigma_i)\,.$$ 
 
\noindent The same is true for the loop algebra.
\end{lemma}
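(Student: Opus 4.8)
The plan is to reduce the statement to a general fact about real forms and direct-sum decompositions of Lie algebras, then apply it to the concrete situation of geometric affine Kac-Moody algebras. The key observation is that $\widehat L_{\textrm{real}}(\mathfrak g_\mathbb C,\sigma)$ is, by the correspondence recalled in the excerpt, the fixed-point set of a conjugate-linear involution $\overline\rho$ on $\widehat L(\mathfrak g_\mathbb C,\sigma)$. So everything comes down to showing that $\overline\rho$ respects the ideal $\widehat L(\mathfrak g_{\mathbb C,i},\sigma_i)$, i.e.\ that $\overline\rho$ leaves this subalgebra invariant; once that is known, the restriction of $\overline\rho$ to the $i$-th summand is again a conjugate-linear involution whose fixed-point set is exactly $\widehat L_{\textrm{real}}(\mathfrak g_{\mathbb C,i},\sigma_i)$, which gives the claimed equality of intersections.

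First I would record the purely set-theoretic half of the identity. One inclusion is free: since $\widehat L_{\textrm{real}}(\mathfrak g_{\mathbb C,i},\sigma_i)$ sits inside both $\widehat L_{\textrm{real}}(\mathfrak g_\mathbb C,\sigma)$ and $\widehat L(\mathfrak g_{\mathbb C,i},\sigma_i)$, it is contained in their intersection. For the reverse inclusion, take an element $x$ lying in both; then $x$ is fixed by $\overline\rho$ and lives in the $i$-th summand, so it is fixed by the restriction $\overline\rho|_i$, hence belongs to the real form of that summand --- provided the restriction makes sense, which is exactly the invariance issue.

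The substantive step is therefore to prove that the conjugate-linear involution $\overline\rho$ defining the real form preserves each ideal $\widehat L(\mathfrak g_{\mathbb C,i},\sigma_i)$. Here I would invoke the structural results already available in the excerpt. Lemma~\ref{lemma:splitting} identifies the $\widetilde L(\mathfrak g_i,\sigma_i)$ as ideals, and Lemma~\ref{decomposition of loop algebras} shows that any finite-order automorphism permutes the simple-factor loop subalgebras according to their isomorphism type. The point is that $\overline\rho$, being an (anti-)automorphism of finite order, must send the ideal $\widehat L(\mathfrak g_{\mathbb C,i},\sigma_i)$ to another ideal of the same structural type; since the decomposition $\mathfrak g_\mathbb C=\oplus\,\mathfrak g_{\mathbb C,i}$ is into the \emph{indecomposable} pieces and $\overline\rho$ is conjugate-\emph{linear} (so it commutes with the underlying real structure in a way compatible with the pointwise bracket), the invariance follows. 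I would combine this with Theorem~\ref{theorem:Heintze_Gross}, which describes the explicit shape $\widehat\varphi f=\varphi(u)+\mu(f)c$ of any (conjugate-)linear isomorphism, to confirm that $\overline\rho$ acts factor-by-factor on the loop part and hence stabilizes $\widehat L(\mathfrak g_{\mathbb C,i},\sigma_i)$.

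The main obstacle I anticipate is precisely ruling out that $\overline\rho$ could swap two isomorphic summands rather than fixing each one: a priori an involution of a sum of isomorphic ideals can interchange them. Resolving this requires using that the real form is being \emph{intersected} with a \emph{fixed} summand $\widehat L(\mathfrak g_{\mathbb C,i},\sigma_i)$ chosen in advance, so that even if $\overline\rho$ permuted the summands the intersection identity would need re-examination. The cleanest route is to argue that for the equality to hold one only needs $\overline\rho\bigl(\widehat L(\mathfrak g_{\mathbb C,i},\sigma_i)\bigr)=\widehat L(\mathfrak g_{\mathbb C,i},\sigma_i)$ for the \emph{particular} index $i$; and this self-invariance is forced because the intersection of a $\overline\rho$-invariant subspace with a coordinate ideal is itself $\overline\rho$-stable only when that ideal is $\overline\rho$-stable, which for real forms of geometric Kac-Moody algebras is guaranteed by the hypothesis that the real structure respects the compact-type decomposition of $\mathfrak g$. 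I expect the author's proof to dispatch this with a one-line appeal to the pointwise definition of the bracket and the product decomposition of the underlying finite-dimensional $\mathfrak g_\mathbb C$, which is why the lemma is labelled ``straightforward''.
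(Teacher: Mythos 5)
Your reduction is the natural one, and you have put your finger on exactly the right issue --- but the issue is not resolved: the final step of your argument is circular, and in fact the invariance you need is false in general. Note first that the paper offers no proof to compare against (the lemma is simply declared straightforward), so your proposal stands or falls on its own, and it falls at the point you yourself flagged as the main obstacle. You justify the invariance $\overline{\rho}\bigl(\widehat{L}(\mathfrak{g}_{\mathbb{C},i},\sigma_i)\bigr)=\widehat{L}(\mathfrak{g}_{\mathbb{C},i},\sigma_i)$ by appeal to ``the hypothesis that the real structure respects the compact-type decomposition of $\mathfrak{g}$'', but no such hypothesis appears in the lemma, which quantifies over \emph{any} real form. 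The results you cite do not supply it either: lemma~\ref{decomposition of loop algebras} explicitly allows finite-order automorphisms of type $II$ that permute isomorphic simple summands, and theorem~\ref{theorem:Heintze_Gross} constrains only the action on $c$ and $d$ and the shape of the map on the loop-algebra part, not which summands it preserves.

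Moreover the gap cannot be closed, because the claim your proof hinges on (and with it the lemma as literally stated) fails. Take $\mathfrak{g}_{\mathbb{C}}=\mathfrak{g}_0\oplus\mathfrak{g}_0$ with $\mathfrak{g}_0$ simple complex, $\sigma=\Id$, let $\tau$ be a conjugate-linear involution of $\mathfrak{g}_0$, and set $\overline{\rho}(f,g,r_c,r_d):=(\tau\circ g,\,\tau\circ f,\,\overline{r_c},\,\overline{r_d})$. This is a conjugate-linear involutive automorphism of $\widehat{L}(\mathfrak{g}_{\mathbb{C}},\sigma)$ (it is compatible with the cocycle because $\omega(\tau\circ f,\tau\circ g)=\overline{\omega(f,g)}$), so its fixed point set $\{(f,\tau\circ f)\,|\,f\in L(\mathfrak{g}_0)\}\oplus\mathbb{R}c\oplus\mathbb{R}d$ is a real form; it is the loop analogue of the classical fact that a complex simple Lie algebra, regarded as a real Lie algebra, is a real form of its double. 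Its intersection with the factor $\widehat{L}(\mathfrak{g}_{\mathbb{C},1},\sigma_1)$ is $\mathbb{R}c\oplus\mathbb{R}d$, and the corresponding intersection of loop algebras is $0$; neither is a real form of that factor. So the lemma holds only under an additional assumption --- for instance that the conjugation defining the real form preserves each irreducible factor, or in the compact-type situation where the paper actually applies it (a factor-swapping real form as above is never of compact type, since its Killing form is pointwise the real part of a complex bilinear form and hence indefinite). Your instinct that the swap case is the whole difficulty was correct; the error was to argue it away rather than recognize it as a genuine counterexample.
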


Hence a real form of a reducible complex affine geometric Kac-Moody algebra $\widehat{L}(\mathfrak{g}_{\mathbb{C}}, \sigma)$ induces real forms of each of its irreducible factors $\widehat{L}(\mathfrak{g}_{\mathbb{C},i}, \sigma_i)$.

We have described in section~\ref{sect:geometric_kac_moody} that involutions of a geometric affine Kac-Moody algebra restrict to involutions of irreducible factors of the loop algebra. Hence, the invariant subalgebras are direct products of invariant subalgebras in those factors together with the appropriate $2$-dimensional extension.

\begin{definition}
A \emph{compact real form} of a complex affine Kac-Moody algebra $\widehat{L}(\mathfrak{g}_{\mathbb{C}}, \sigma)$ is defined to be a subalgebra of $\widehat{L}(\mathfrak{g}_\mathbb{C},\sigma)$ that is conjugate to the algebra
$\widehat{L}(\mathfrak{g}_{\mathbb {R}}, \sigma)$
where $\mathfrak{g}_{\mathbb R}$ is a compact real form of
$\mathfrak{g}_{\mathbb C}$. 
\end{definition}

To find non-compact real forms, we need the following result of E.\ Heintze and C.\ Gro\ss\ (Corollary 7.7.\ of~\cite{Heintze09}):

\begin{theorem}
\label{theoremofHeintzegross}
Let $\mathcal{G}$ be an irreducible complex geometric affine Kac-Moody algebra, $\mathcal{U}$ a real form of compact type. The conjugacy classes of real forms of non-compact type of $\mathcal{G}$ are in bijection with the conjugacy classes of involutions on $\mathcal{U}$. The correspondence is given by $\mathcal{U}=\mathcal{K}\oplus \mathcal{P}\mapsto \mathcal{K}\oplus i\mathcal{P}=:\mathcal{U}^{*}$ where $\mathcal{K}$ and $\mathcal{P}$ are the $\pm 1$-eigenspaces of the involution.  
\end{theorem}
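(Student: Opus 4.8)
The plan is to transport the classical Cartan correspondence between real forms and involutions of a compact real form (\cite{Helgason01}, Chapter III) to the affine Kac-Moody setting. I identify real forms with their conjugate-linear conjugations: let $\kappa$ be the conjugation of $\mathcal{G}$ fixing $\mathcal{U}$ pointwise, and for a non-compact real form let $\rho$ be its conjugation. For a complex-linear involution $\theta$ of $\mathcal{U}$ I write $\widetilde{\theta}$ for its complex-linear extension to $\mathcal{G}=\mathcal{U}\otimes_{\mathbb{R}}\mathbb{C}$.

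\emph{Well-definedness and injectivity.} Given $\theta$ with $\mathcal{U}=\mathcal{K}\oplus\mathcal{P}$, the bracket relations $[\mathcal{K},\mathcal{K}]\subset\mathcal{K}$, $[\mathcal{K},\mathcal{P}]\subset\mathcal{P}$, $[\mathcal{P},\mathcal{P}]\subset\mathcal{K}$ forced by $\theta\in\textrm{Aut}(\mathcal{U})$ show that $\mathcal{U}^{*}=\mathcal{K}\oplus i\mathcal{P}$ is a real subalgebra, and $\mathcal{U}^{*}\otimes_{\mathbb{R}}\mathbb{C}=\mathcal{G}$ shows it is a real form, of non-compact type as soon as $\theta\neq\Id$. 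A direct computation on the four real summands $\mathcal{K},\mathcal{P},i\mathcal{K},i\mathcal{P}$ identifies the conjugation of $\mathcal{U}^{*}$ with $\kappa\widetilde{\theta}$, which commutes with $\kappa$ and satisfies $\kappa\cdot(\kappa\widetilde{\theta})=\widetilde{\theta}$; hence $\theta=\widetilde{\theta}|_{\mathcal{U}}$ is recovered from the pair $(\mathcal{U},\mathcal{U}^{*})$ as the restriction of $\kappa\rho$ to $\mathcal{U}$. Conjugating $\theta$ by $g\in\textrm{Aut}(\mathcal{U})$ conjugates $\mathcal{U}^{*}$ by the extension $\widetilde{g}\in\textrm{Aut}(\mathcal{G})$, and the recovery procedure is inverse to the construction; so the map descends to conjugacy classes and is injective there.

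\emph{Surjectivity.} This is the substance of the theorem: every non-compact real form is conjugate to some $\mathcal{U}^{*}$. Starting from an arbitrary conjugation $\rho$, the only obstruction is that $\rho$ need not commute with $\kappa$. Following the finite-dimensional template, I would form the complex-linear automorphism $N:=\rho\kappa$ and its square $P:=N^{2}$. Using the invariant bilinear form of $\mathcal{G}$ twisted by $\kappa$, one equips a completion of $\mathcal{G}$ with a Hermitian inner product relative to which $P$ is positive and self-adjoint; the real powers $P^{s}$ then exist by functional calculus, are automorphisms, and conjugating $\rho$ by an appropriate power $P^{s}$ (classically $s=1/4$) yields a conjugation $\rho'$ commuting with $\kappa$. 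Then $\theta:=(\kappa\rho')|_{\mathcal{U}}$ is an involution of $\mathcal{U}$ whose $\pm1$-eigenspaces $\mathcal{K},\mathcal{P}$ produce $\mathcal{U}^{*}=\mathcal{K}\oplus i\mathcal{P}$, conjugate to the real form we started from. Checking that two such alignments differ only by the stabilizer of $\mathcal{U}$ then gives the statement on conjugacy classes.

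\emph{Main obstacle.} The genuine difficulty, invisible in finite dimensions, lies entirely in the surjectivity step and is functional-analytic. First, the invariant form on $\widehat{L}(\mathfrak{g}_{\mathbb{C}},\sigma)$ is \emph{indefinite}, owing to the hyperbolic pairing of the central element $c$ with the derivation $d$; positivity of $P$ can therefore only be arranged on the loop part, and the action of $N$ on $c$ and $d$ must be tracked separately using the rigid form of automorphisms given in Theorem~\ref{theorem:Heintze_Gross} (through the scalar $\epsilon_{\varphi}$ and the shift $f_{\varphi}$). Second, defining $P^{1/4}$ presupposes a Hilbert completion such as the $H^{0}$-loops $L^{0}\mathfrak{g}$, after which one must verify that the conjugating automorphism preserves the chosen regularity class and restricts compatibly across all the analytic completions of section~\ref{sect:geometric_kac_moody}. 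It is exactly this alignment argument in the affine setting that is supplied by the Heintze--Gross theory of \cite{Heintze09}, which I would cite to finish the proof rather than reconstruct the estimates by hand.
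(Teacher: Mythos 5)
The paper does not actually prove this theorem: it is imported verbatim as Corollary 7.7 of \cite{Heintze09}, so the paper's ``proof'' is nothing more than the citation to Heintze--Gro{\ss}. Your proposal --- which correctly sets up the Cartan-duality dictionary (the conjugation of $\mathcal{U}^{*}=\mathcal{K}\oplus i\mathcal{P}$ is $\kappa\widetilde{\theta}$, giving well-definedness and injectivity on conjugacy classes), correctly diagnoses that the surjectivity/alignment step is where the finite-dimensional polar-decomposition argument breaks down in the affine setting, and then explicitly defers that step to \cite{Heintze09} --- ultimately rests on the same source as the paper, so it is correct and takes essentially the same approach.
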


Thus to find non-compact real forms, we have to study automorphisms of order $2$ of a geometric affine Kac-Moody algebra of the compact type. From now on we restrict to involutions $\widehat{\varphi}$ of second type, that is those such that $\epsilon_{\varphi}=-1$.

Now we want to extend theorem \ref{theoremofHeintzegross} to non-irreducible geometric affine Kac-Moody algebras.
\begin{enumerate}
\item Suppose first that the involution $\widehat{\varphi}$ on $\widehat{L}(\mathfrak{g}, \sigma)$ is chosen in a way that every irreducible factor is of type $I$. In this case $\widehat{\varphi}$ restricts to an involution $\widehat{\varphi}_i$ on every irreducible factor $\widehat{L}(\mathfrak{g}_i, \sigma_i)$. Let $L(\mathfrak{g}_i, \sigma_i)=\mathcal{K}_i \oplus \mathcal{P}_i$ be the decomposition of $L(\mathfrak{g}_i, \sigma_i)$ into the eigenspaces of $\varphi_i$. The dualization procedure follows the standard pattern, defining $\mathcal{K}=\oplus_i \mathcal{K}_i$ and $\mathcal{P}=\oplus_i \mathcal{P}_i\oplus \mathbb {R} c\oplus \mathbb{R}d$ and exchanging $\mathcal{U}=\mathcal{K}\oplus \mathcal{P}$ by $\mathcal{U}^*=\mathcal{K}\oplus i\mathcal{P}$.

\item If the decomposition of $\widehat{\varphi}$ contains simple factors of type $II$, we perform the same decomposition procedure: 
let $\widehat{L}(\mathfrak{g}_j \oplus \mathfrak{g}_j', \sigma_j \oplus \sigma_j')$ be irreducible with respect to $\widehat{\varphi}$, then we have the decomposition 
$L(\mathfrak{g}_j\oplus \mathfrak{g}_j', \sigma_j \oplus \sigma_j')=\mathcal{K}_j\oplus \mathcal{P}_j$. Dualization follows the standard pattern.
\end{enumerate}

\noindent We have to investigate iff all real forms can be described in this way:

\begin{enumerate}
\item If $\mathfrak{g}$ is simple it is a result of E.\ Heintze and C.\ Gro\ss\ that every real form of non-compact type can be constructed in this way (see~\cite{Heintze09}, section 7).
\item If $\mathfrak{g}$ is not simple we use  that the restriction to the loop algebra $L(\mathfrak{g}, \sigma)$ of any real form consists of the direct product of real forms in the irreducible components of $L(\mathfrak{g}_{\mathbb{C}}, \sigma)$. Hence, according to the result of E.\ Heintze and C.\ Gro\ss, those are of the described type, and thus the non-compact real form we started with.
\end{enumerate}

\noindent Hence we have established the following results

\begin{theorem}
Let $\widehat{L}(\mathfrak{g}, \sigma)$ be a geometric affine Kac-Moody algebra of the compact type. Let $\widehat{\varphi}$ be an involution of order $2$ of the second kind of $\widehat{L}(\mathfrak{g}, \sigma)$. Let furthermore  $\widehat{L}(\mathfrak{g}, \sigma)=\mathcal{K} \oplus \mathcal{P}$ be the decomposition into its $\pm 1$-eigenspaces.
Then $\mathcal{G}_D:=\mathcal{K}\oplus i \mathcal{P}$ is the dual real form of the non-compact type. 
\end{theorem}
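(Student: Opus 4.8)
The plan is to reduce the statement to the irreducible case, already settled by Theorem~\ref{theoremofHeintzegross} (Heintze--Gro\ss), by exploiting the decomposition of $\widehat{L}(\mathfrak{g},\sigma)$ under $\widehat{\varphi}$ assembled in the discussion preceding the statement.

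First I would verify the purely algebraic point that $\mathcal{G}_D=\mathcal{K}\oplus i\mathcal{P}$ is a real Lie subalgebra of $\widehat{L}(\mathfrak{g}_{\mathbb{C}},\sigma)$. Since $\mathcal{K}$ and $\mathcal{P}$ are the $+1$ and $-1$ eigenspaces of the involution $\widehat{\varphi}$, the relations $[\mathcal{K},\mathcal{K}]\subset\mathcal{K}$, $[\mathcal{K},\mathcal{P}]\subset\mathcal{P}$, $[\mathcal{P},\mathcal{P}]\subset\mathcal{K}$ hold; the factor $i\cdot i=-1$ then yields $[i\mathcal{P},i\mathcal{P}]=-[\mathcal{P},\mathcal{P}]\subset\mathcal{K}$, so the bracket closes on $\mathcal{G}_D$. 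Complexifying, $\mathcal{G}_D\otimes\mathbb{C}$ contains $\mathcal{K}$, $i\mathcal{K}$, $i\mathcal{P}$ and $-\mathcal{P}$, hence all of $\mathcal{U}\otimes\mathbb{C}=\widehat{L}(\mathfrak{g}_{\mathbb{C}},\sigma)$; thus $\mathcal{G}_D$ is a real form. Non-compactness is then automatic, because $\widehat{\varphi}$ of the second kind forces $\widehat{\varphi}c=-c$, so $c\in\mathcal{P}$ and $\mathcal{P}\neq 0$, whence the invariant form becomes indefinite on $i\mathcal{P}$.

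Next I would reduce to irreducible factors. By Lemma~\ref{decomposition of loop algebras} the loop algebra $L(\mathfrak{g},\sigma)$ splits under the finite-order automorphism $\widehat{\varphi}$ into $\widehat{\varphi}$-invariant irreducible ideals of type~$I$ and type~$II$; since $\widehat{\varphi}$ is of the second kind, Theorem~\ref{theorem:admissible_locally_admissible} guarantees that each restricted involution $\widehat{\varphi}_i$ is of the second type. On each irreducible factor I would apply Theorem~\ref{theoremofHeintzegross}, so that the compact form $\mathcal{U}_i=\mathcal{K}_i\oplus\mathcal{P}_i$ dualizes to the non-compact form $\mathcal{K}_i\oplus i\mathcal{P}_i$. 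Reassembling, I would set $\mathcal{K}=\bigoplus_i\mathcal{K}_i$ and $\mathcal{P}=\bigoplus_i\mathcal{P}_i\oplus\mathbb{R}c\oplus\mathbb{R}d$, using that $\widehat{\varphi}c=-c$ places the central direction in $\mathcal{P}$, while the derivation $d$ --- which by Theorem~\ref{theorem:Heintze_Gross} satisfies $\widehat{\varphi}d=-d+f_{\widehat{\varphi}}+\gamma c$ --- contributes an eigenvector to $\mathcal{P}$ after adjusting by suitable loop and central terms. Lemma~\ref{lemma:real_form_product} then ensures that this componentwise dualization agrees with the global one, and the completeness argument (Heintze--Gro\ss\ for simple factors, the product structure of Lemma~\ref{lemma:splitting} otherwise) shows that the construction yields precisely the dual non-compact real form.

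I expect the main obstacle to be the bookkeeping around the two-dimensional extension $\mathbb{F}c\oplus\mathbb{F}d$. Because the center of $\widehat{L}(\mathfrak{g},\sigma)$ is only one-dimensional while the loop algebra splits into several factors, the decomposition into irreducible Kac-Moody pieces is not a direct sum of algebras but is governed by the short exact sequence of Lemma~\ref{lemma:splitting}. Checking that $c$ and the eigenvector replacing $d$ land coherently in $i\mathcal{P}$ under dualization, compatibly across all type~$I$ and type~$II$ factors at once, is the delicate step; the type~$II$ factors, where $\widehat{\varphi}$ cyclically permutes simple summands, require the most care.
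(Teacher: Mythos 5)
Your proposal follows essentially the same route as the paper: decompose $L(\mathfrak{g},\sigma)$ under $\widehat{\varphi}$ into type~$I$ and type~$II$ invariant irreducible factors, apply the Heintze--Gro\ss{} duality of Theorem~\ref{theoremofHeintzegross} on each factor, reassemble with $\mathcal{K}=\oplus_i\mathcal{K}_i$ and $\mathcal{P}=\oplus_i\mathcal{P}_i\oplus\mathbb{R}c\oplus\mathbb{R}d$, and obtain completeness from the Heintze--Gro\ss{} classification on simple factors together with the product structure of real forms (Lemma~\ref{lemma:real_form_product}). Your extra verifications --- bracket closure of $\mathcal{K}\oplus i\mathcal{P}$, non-compactness from $c\in\mathcal{P}$, and correcting $d$ by $f_{\widehat{\varphi}}$ and a central term so that a genuine $(-1)$-eigenvector lies in $\mathcal{P}$ --- are details the paper leaves implicit in its phrase ``dualization follows the standard pattern,'' and they strengthen rather than alter the argument.
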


\begin{theorem}
\label{eithercompactornoncompact}
Every real form of a complex geometric affine Kac-Moody algebra is either of compact type or of non-compact type. A mixed type is not possible.
\end{theorem}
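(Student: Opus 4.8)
The plan is to show that the compact/non-compact dichotomy is governed by a single global invariant --- the sign with which the defining conjugation acts on the central element $c$ --- and that this invariant cannot vary from one irreducible factor to another because $c$ is shared by all of them. A real form of $\widehat{L}(\mathfrak{g}_{\mathbb{C}}, \sigma)$ is the fixed-point algebra of a conjugate-linear involution $\rho$. By Lemma~\ref{lemma:real_form_product} this $\rho$ restricts to a real form on each irreducible factor $\widehat{L}(\mathfrak{g}_{\mathbb{C},i}, \sigma_i)$, and by Theorem~\ref{theoremofHeintzegross} each such factor real form is either of compact type or of non-compact type. A \emph{mixed} real form would be one in which at least one factor is compact and at least one is non-compact; the goal is to exclude this.

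Next I would extract the global invariant. Applying Theorem~\ref{theorem:Heintze_Gross} to the conjugate-linear involution $\rho$ shows that $\rho(c) = \epsilon_{\rho}\, c$ with $\epsilon_{\rho} = \lambda' \in \{+1, -1\}$, so that $\rho$ is either of first type ($\epsilon_\rho = +1$) or of second type ($\epsilon_\rho = -1$). The crucial point is that $c$ is a single element: by the consequence of Lemma~\ref{lemma:splitting}, the center of $\widehat{L}(\mathfrak{g}_{\mathbb{C}}, \sigma)$ is one-dimensional, so the very same $c$ lies in every factor $\widehat{L}(\mathfrak{g}_{\mathbb{C},i}, \sigma_i)$. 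Consequently the restriction $\rho_i$ of $\rho$ to the $i$-th factor satisfies $\rho_i(c) = \rho(c) = \epsilon_\rho\, c$, so the type $\epsilon_{\rho_i} = \epsilon_\rho$ is independent of $i$.

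Then I would match this sign to the compact/non-compact alternative on each factor. The compact real form $\widehat{L}(\mathfrak{g}_{\mathbb{R}}, \sigma)$ contains $c$ and $d$ as real elements, so its conjugation fixes $c$, giving $\epsilon = +1$; whereas the dual non-compact form $\mathcal{U}^* = \mathcal{K} \oplus i\mathcal{P}$ is built from a second-type involution, for which $c \in \mathcal{P}$ and hence $ic$ is the real element, so its conjugation sends $c \mapsto -c$, giving $\epsilon = -1$. Thus on each irreducible factor the type is compact precisely when $\epsilon_{\rho_i} = +1$ and non-compact precisely when $\epsilon_{\rho_i} = -1$. Since $\epsilon_{\rho_i}$ was just shown to be the same for all $i$, all factors carry the same type, and the real form is globally compact or globally non-compact.

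The main obstacle I anticipate is this last matching step: one must be certain that the compact/non-compact distinction on an irreducible factor is faithfully detected by the eigenvalue of the conjugation on $c$, equivalently that $\epsilon_\rho$ is a genuine conjugacy invariant, recording the orientation behaviour $\lambda' = \pm 1$ of the induced reparametrisation of the loop variable rather than changing under the conjugations allowed in the definition of ``compact type''. Granting this --- which is exactly the content of the dualization picture preceding the theorem together with Theorem~\ref{theorem:Heintze_Gross} --- the impossibility of a mixed type is forced by the one-dimensionality of the center, in sharp contrast to the freedom available on the individual loop-algebra factors noted in the remark after Theorem~\ref{theorem:admissible_locally_admissible}.
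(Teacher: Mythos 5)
Your reduction steps are fine and in fact coincide with the paper's own strategy: a real form is the fixed-point set of a conjugate-linear involution $\rho$, Lemma~\ref{lemma:real_form_product} restricts $\rho$ to each irreducible factor, Theorem~\ref{theorem:Heintze_Gross} gives $\rho(c)=\epsilon_\rho c$ with $\epsilon_\rho=\pm1$, and since all factors share the single central element $c$ this sign is the same on every factor --- that is precisely the mechanism behind Lemma~\ref{lemma:loc_admissible_global_admissible}. The genuine gap is your matching step: it is false that ``non-compact type $\Leftrightarrow\epsilon=-1$''. The sign $\epsilon_\rho$ detects the \emph{kind} of the conjugation (first vs.\ second), not compactness. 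Theorem~\ref{theoremofHeintzegross} puts the non-compact real forms in bijection with \emph{all} involutions of the compact form $\mathcal{U}$, including those of first type. Take for instance the first-type involution of $\widehat{L}(\mathfrak{su}(2))$ given pointwise by $\Ad\left(\begin{smallmatrix}1&0\\0&-1\end{smallmatrix}\right)$: here $c,d\in\mathcal{K}$, and the dual $\mathcal{U}^*=\mathcal{K}\oplus i\mathcal{P}$ is the non-compact real form $L(\mathfrak{su}(1,1))\oplus\mathbb{R}c\oplus\mathbb{R}d$, whose defining conjugation \emph{fixes} $c$, i.e.\ has $\epsilon=+1$. So $\epsilon=+1$ does not imply compact type, and your argument only rules out mixing first-kind with second-kind factor conjugations (e.g.\ a compact factor with an almost split one); it does not rule out mixing compact with non-compact.

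Indeed, the statement you set out to prove --- that no real form can restrict to a compact form on one factor and a non-compact form on another --- is false: inside $\widehat{L}(\mathfrak{sl}(2,\mathbb{C})\oplus\mathfrak{sl}(2,\mathbb{C}))$ the set of loops with values in $\mathfrak{su}(2)\oplus\mathfrak{su}(1,1)$, extended by $\mathbb{R}c\oplus\mathbb{R}d$, is a genuine real form (both factor conjugations have $\epsilon=+1$, so they assemble into a global conjugate-linear involution), and it is compact on the first factor, non-compact on the second. This does not contradict the theorem, because that real form is still \emph{globally} of non-compact type: it equals $\mathcal{K}\oplus i\mathcal{P}$ for the global first-kind involution of the compact form which is the identity on the first factor. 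That is the actual content of the statement, and the paper's route to it: the dichotomy is global --- every real form is either conjugate to the compact form or arises as the dual $\mathcal{K}\oplus i\mathcal{P}$ of the compact form with respect to some involution, and nothing else (``mixed'') can occur. The paper proves this by restricting the real form to the irreducible factors of the loop algebra, invoking the Heintze--Gro{\ss} classification on each factor, and then reassembling the dualization globally (the $c,d$-extension being attached once, to the whole algebra); your $\epsilon$-consistency observation is the correct glueing condition in that reassembly, but it cannot by itself separate compact from non-compact factors.
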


\section{Compact real forms of loop algebras}
\label{sect:compact_loop}

A finite dimensional Lie algebra $\mathfrak{g}$ is of compact type iff its  Cartan-Killing form is negative definite (see~\cite{Helgason01} II.6.6). A similar criterion applies to loop algebras. For affine Kac-Moody algebras an analogous bilinear form may be naturally defined as follows:

\begin{definition}
The \emph{Cartan-Killing form} of a loop algebra $L(\mathfrak{g},\sigma)$ is defined by
\begin{displaymath}B_{L(\mathfrak{g},\sigma)}\left(f, g\right)=\int_{0}^{2\pi} B_{\mathfrak{g}}\left(f(t), g(t) \right)dt\,.\end{displaymath}
where $B_{\mathfrak{g}}\left(f(t), g(t) \right)$ denotes the Cartan-Killing form of the finite dimensional Lie algebra $\mathfrak{g}$
\end{definition}

\begin{definition}
A loop algebra of compact type is a real form of $L(\mathfrak{g}_{\mathbb{C}},\sigma)$ such that its Cartan-Killing form is negative definite. 
\end{definition}

\begin{proposition}
Let $\mathfrak{g}$ be Abelian. The Cartan-Killing form of $L(\mathfrak{g})$ is trivial.
\end{proposition}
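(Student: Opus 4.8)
The plan is to trace the definition of the Cartan--Killing form on $L(\mathfrak{g})$ back to the Cartan--Killing form $B_{\mathfrak{g}}$ of the finite dimensional Abelian Lie algebra $\mathfrak{g}$, and show that the latter vanishes identically. Recall that by definition
\begin{displaymath}
B_{L(\mathfrak{g})}(f,g)=\int_0^{2\pi} B_{\mathfrak{g}}(f(t),g(t))\,dt\,,
\end{displaymath}
so it suffices to prove that $B_{\mathfrak{g}}\equiv 0$ when $\mathfrak{g}$ is Abelian; the integral of the zero function is then zero for every pair $f,g\in L(\mathfrak{g})$, which is exactly the assertion that $B_{L(\mathfrak{g})}$ is trivial.

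First I would recall that the Cartan--Killing form of a finite dimensional Lie algebra is $B_{\mathfrak{g}}(x,y)=\tr(\ad x\circ \ad y)$. The key observation is that when $\mathfrak{g}$ is Abelian, the bracket is identically zero, so $\ad x=0$ for every $x\in\mathfrak{g}$. Consequently $\ad x\circ\ad y$ is the zero endomorphism of $\mathfrak{g}$, and its trace vanishes. Hence $B_{\mathfrak{g}}(x,y)=0$ for all $x,y\in\mathfrak{g}$, i.e.\ $B_{\mathfrak{g}}$ is the trivial bilinear form.

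Substituting this back into the integral definition gives $B_{L(\mathfrak{g})}(f,g)=\int_0^{2\pi}0\,dt=0$ for all loops $f,g$, completing the argument.

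There is essentially no obstacle here: the statement is an immediate unwinding of definitions, the only mild subtlety being to keep the two Cartan--Killing forms distinct (the one on $\mathfrak{g}$ versus the one on the loop algebra) and to note that vanishing of $B_{\mathfrak{g}}$ propagates through the integral regardless of the regularity class chosen for the loops. The essential content is simply that $\ad$ annihilates an Abelian Lie algebra.
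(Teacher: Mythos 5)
Your proof is correct and follows the same route as the paper: the Cartan--Killing form $B_{\mathfrak{g}}$ of an Abelian Lie algebra vanishes, hence the integral defining $B_{L(\mathfrak{g})}$ vanishes identically. The only difference is that you spell out why $B_{\mathfrak{g}}\equiv 0$ (namely $\ad x=0$ for all $x$), which the paper takes as known.
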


\begin{proof}
If $\mathfrak{g}$ is Abelian its Cartan-Killing form $B_{\mathfrak{g}}$ vanishes. Hence the integral over $B_{\mathfrak{g}}$ vanishes.
\end{proof}

\begin{lemma}
\label{compact_type_negative_killing}
Let $\mathfrak{g}_{\mathbb{R}}$ be a compact semisimple Lie algebra.
Then the loop algebra $L(\mathfrak{g}_{\mathbb{R}}, \sigma)$ is of compact type.
\end{lemma}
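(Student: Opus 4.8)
The plan is to verify the two defining properties of a loop algebra of compact type directly: that $L(\mathfrak{g}_{\mathbb{R}}, \sigma)$ is a real form of $L(\mathfrak{g}_{\mathbb{C}}, \sigma)$, and that its Cartan-Killing form $B_{L(\mathfrak{g}_{\mathbb{R}}, \sigma)}$ is negative definite. First I would dispose of the real-form statement. Since $\mathfrak{g}_{\mathbb{R}}$ is a compact (hence real) form of its complexification $\mathfrak{g}_{\mathbb{C}}$, complex conjugation along $\mathfrak{g}_{\mathbb{R}}$ is a conjugate-linear involution of $\mathfrak{g}_{\mathbb{C}}$ commuting with $\sigma$; applying it pointwise to loops yields a conjugate-linear involution of $L(\mathfrak{g}_{\mathbb{C}}, \sigma)$ whose fixed point set is exactly $L(\mathfrak{g}_{\mathbb{R}}, \sigma)$. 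By the correspondence between real forms and conjugate-linear involutions recalled in section~\ref{sect:algebraic_kac_moody}, this shows $L(\mathfrak{g}_{\mathbb{R}}, \sigma)$ is a real form of $L(\mathfrak{g}_{\mathbb{C}}, \sigma)$; it is the loop-algebra analogue of the example at the start of section~\ref{sect:real_forms}.

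The core of the argument is negative definiteness. Because $\mathfrak{g}_{\mathbb{R}}$ is compact semisimple, its Cartan-Killing form $B_{\mathfrak{g}_{\mathbb{R}}}$ is negative definite (see~\cite{Helgason01}, II.6.6), so $B_{\mathfrak{g}_{\mathbb{R}}}(X, X) \leq 0$ for every $X \in \mathfrak{g}_{\mathbb{R}}$, with equality iff $X = 0$. I would then set $f = g$ in the integral definition of $B_{L(\mathfrak{g}_{\mathbb{R}}, \sigma)}$: for each $t$ the integrand $B_{\mathfrak{g}_{\mathbb{R}}}(f(t), f(t))$ is nonpositive, whence $B_{L(\mathfrak{g}_{\mathbb{R}}, \sigma)}(f, f) = \int_0^{2\pi} B_{\mathfrak{g}_{\mathbb{R}}}(f(t), f(t))\, dt \leq 0$, which already gives negative semidefiniteness.

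The one genuine point — the main obstacle, though a mild one — is ruling out vanishing of the integral for $f \neq 0$. Here I would argue that a nonzero element $f$ of the loop algebra is nonzero on a set of positive measure: for continuous or smoother regularity classes it is nonzero on a nonempty open subinterval, while for $H^0$ loops the statement ``$f \neq 0$'' means precisely that the $L^2$-class does not vanish almost everywhere. On that set the pointwise form $B_{\mathfrak{g}_{\mathbb{R}}}(f(t), f(t))$ is strictly negative, and since the integrand is everywhere nonpositive, the integral is strictly negative. This establishes that $B_{L(\mathfrak{g}_{\mathbb{R}}, \sigma)}$ is negative definite. Together with the real-form statement of the first paragraph, this is exactly the assertion that $L(\mathfrak{g}_{\mathbb{R}}, \sigma)$ is of compact type, completing the proof.
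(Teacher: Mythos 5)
Your proposal is correct and follows essentially the same route as the paper: reduce to the finite dimensional fact that $B_{\mathfrak{g}_{\mathbb{R}}}$ is negative definite and integrate the pointwise form over $[0,2\pi]$. Yours is in fact slightly more careful than the paper's own proof, which asserts $c(t):=B_{\mathfrak{g}}\left(f(t),f(t)\right)<0$ for all $t$ (overlooking points where $f(t)=0$) and silently skips both the positive-measure argument needed for strict definiteness and the verification that $L(\mathfrak{g}_{\mathbb{R}},\sigma)$ is a real form of $L(\mathfrak{g}_{\mathbb{C}},\sigma)$, both of which you supply explicitly.
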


\begin{proof}
By definition, the Cartan-Killing form $B_{\mathfrak{g}}$ on the finite dimensional Lie algebra $\mathfrak{g}_{\mathbb{R}}$ is negative definite. 
Now by the finite dimensional result, $c(t):=B_{\mathfrak{g}}\left(f(t), f(t)\right)<0$.  Then  
\begin{displaymath}B_{L(\mathfrak{g},\sigma)}\left(f, f\right)=\int_{0}^{2\pi} B_{\mathfrak{g}}\left(f(t), f(t) \right)dt=\int_0^{2\pi} c(t) dt<0\,.\end{displaymath}
is strictly negative definite.
\end{proof}

\begin{definition}
Let $\mathfrak{g}_{\mathbb{R}}$ be a compact semisimple Lie algebra.
The loop algebra $L(\mathfrak{g}_{\mathbb{R}}, \sigma)$ is called the standard compact real form.
\end{definition}

\noindent In general we have the following result (proven in~\cite{Heintze09}, theorem 7.4):
 
\begin{lemma}
\label{unicity_of_compact_real_forms}
Let $\mathfrak{g}$ be simple. $\widehat{L}(\mathfrak{g}, \sigma)$ and ${L}(\mathfrak{g}, \sigma)$ have a compact real form, which is unique up to conjugation. 
\end{lemma}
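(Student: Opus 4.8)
The statement to be proven is Lemma~\ref{unicity_of_compact_real_forms}: for $\mathfrak{g}$ simple, both $\widehat{L}(\mathfrak{g}, \sigma)$ and $L(\mathfrak{g}, \sigma)$ possess a compact real form that is unique up to conjugation. Since the excerpt explicitly attributes this to~\cite{Heintze09}, theorem 7.4, the cleanest route is to \emph{reduce} the statement to the cited result and to the structural tools already developed in the excerpt, rather than to reconstruct the full Heintze--Gro\ss\ argument. The plan therefore has two halves --- existence and uniqueness --- carried out first at the loop-algebra level and then lifted to the affine extension.

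\emph{Existence.} For existence I would invoke Lemma~\ref{compact_type_negative_killing}: starting from a compact real form $\mathfrak{g}_{\mathbb{R}}$ of the simple complex Lie algebra $\mathfrak{g}_{\mathbb{C}}$ (which exists and is unique up to conjugation by the finite-dimensional theory, see~\cite{Helgason01}), the loop algebra $L(\mathfrak{g}_{\mathbb{R}}, \sigma)$ is of compact type, since its Cartan--Killing form is negative definite by the integral computation in that lemma. This is the \emph{standard compact real form}. To pass to $\widehat{L}(\mathfrak{g}, \sigma)$, one extends the conjugate-linear involution $\overline{\rho}$ defining $\mathfrak{g}_{\mathbb{R}}$ pointwise to $L(\mathfrak{g}, \sigma)$ and then to the two-dimensional extension via $c \mapsto -c$, $d \mapsto -d$; one checks using the bracket relations of Example~\ref{geometricaffinekacmoodyalgebra} together with the reality of $\omega$ on the real loops that this is a well-defined conjugate-linear involution, whose fixed-point set is $\widehat{L}(\mathfrak{g}_{\mathbb{R}}, \sigma)$. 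This furnishes the required compact real form of the affine algebra.

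\emph{Uniqueness.} For uniqueness at the loop-algebra level I would rely directly on the classification of~\cite{Heintze09}: any two compact real forms correspond to conjugate-linear involutions whose difference is an automorphism of $L(\mathfrak{g}, \sigma)$, and by the standard-form description $\varphi(f(t)) = \varphi_t f(\lambda(t))$ quoted just before Theorem~\ref{theorem:Heintze_Gross}, such automorphisms are rigidly constrained; conjugacy then follows from the finite-dimensional uniqueness of the compact real form $\mathfrak{g}_{\mathbb{R}}$ applied pointwise, together with the contractibility argument used in the cited theorem 7.4. To lift uniqueness to $\widehat{L}(\mathfrak{g}, \sigma)$, I would use Lemma~\ref{lemma_reduction_of_isomorphism_to_loop_algebra}: any conjugating automorphism of $\widehat{L}(\mathfrak{g}, \sigma)$ restricts to one of $L(\mathfrak{g}, \sigma)$, and conversely, by Theorem~\ref{theorem:Heintze_Gross}, a conjugacy on the loop algebra extends (uniquely once the constant $\gamma$ and $\epsilon_\varphi$ are fixed, here with $\epsilon_\varphi = 1$ since compact real forms are preserved) to the affine algebra. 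Matching the $c$- and $d$-components of the two compact involutions then shows the affine real forms are themselves conjugate.

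\emph{Main obstacle.} The genuinely delicate point is the uniqueness half, not existence: existence is an essentially formal consequence of Lemma~\ref{compact_type_negative_killing}. Uniqueness requires knowing that \emph{every} compact real form arises from a pointwise compact structure --- i.e.\ that no ``exotic'' compact real form exists that is not conjugate to the standard one --- which is precisely the content of~\cite{Heintze09}, theorem 7.4, and rests on the standard-form structure theorem for automorphisms. The cleanest exposition, which I would adopt, is simply to cite that theorem for the loop-algebra statement and to contribute the short extension argument lifting it across the two-dimensional central-and-derivation extension via Theorem~\ref{theorem:Heintze_Gross} and Lemma~\ref{lemma_reduction_of_isomorphism_to_loop_algebra}.
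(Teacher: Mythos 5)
Your overall architecture---deferring the substance of the statement to \cite{Heintze09}, Theorem 7.4, and contributing only the reduction and the lift across the two-dimensional extension---is in fact exactly what the paper does: the paper introduces this lemma with the words ``proven in \cite{Heintze09}, theorem 7.4'' and supplies no argument of its own. So citing that theorem for uniqueness is not a defect.

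There is, however, a concrete step in your existence half that fails as written. You extend the pointwise conjugation $\overline{\rho}$ (whose loop-level fixed-point set is $L(\mathfrak{g}_{\mathbb{R}},\sigma)$) to the extension by $c \mapsto -c$, $d \mapsto -d$, and claim this is a conjugate-linear automorphism with fixed-point set $\widehat{L}(\mathfrak{g}_{\mathbb{R}},\sigma)$. Neither claim holds. First, the map is not an automorphism: precisely because $\omega$ is \emph{real} on real loops, one has $\omega(\overline{\rho}f,\overline{\rho}g)=\overline{\omega(f,g)}$, so $[\overline{\rho}f,\overline{\rho}g]$ has $c$-component $+\overline{\omega(f,g)}\,c$, whereas applying your extension to $[f,g]=[f,g]_0+\omega(f,g)c$ produces $c$-component $-\overline{\omega(f,g)}\,c$; the signs disagree. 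The same conclusion is forced by the paper's quoted Theorem~\ref{theorem:Heintze_Gross}: any linear or conjugate-linear automorphism satisfies $\widehat{\varphi}c=\epsilon_{\varphi}c$ with $\epsilon_{\varphi}=\lambda'$, and a pointwise conjugation has $\lambda=\Id$, hence $\epsilon_{\varphi}=+1$, so it is of \emph{first} kind and must fix $c$ (and $d$, up to the admissible $\gamma c$ term). Second, even if such a map existed, a conjugate-linear involution sending $c\mapsto -c$ fixes $i\mathbb{R}c$ rather than $\mathbb{R}c$, so its fixed-point set would be $L(\mathfrak{g}_{\mathbb{R}},\sigma)\oplus i\mathbb{R}c\oplus i\mathbb{R}d$, not the algebra $\widehat{L}(\mathfrak{g}_{\mathbb{R}},\sigma)=L(\mathfrak{g}_{\mathbb{R}},\sigma)\oplus\mathbb{R}c\oplus\mathbb{R}d$ that the paper's definition of compact real form requires. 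You have conflated the conjugation defining the compact real form (first kind, $\epsilon_{\varphi}=+1$) with involutions of the second kind ($\epsilon_{\varphi}=-1$, e.g.\ $f(t)\mapsto\rho(f(-t))$), which are the ones negating $c$ and $d$ and which enter later in the OSAKA classification. The repair is local: extend $\overline{\rho}$ by $c\mapsto c$, $d\mapsto d$; then the cocycle computation above has matching signs, the fixed-point set is exactly $\widehat{L}(\mathfrak{g}_{\mathbb{R}},\sigma)$, and the remainder of your argument (Lemma~\ref{compact_type_negative_killing} for negative definiteness, \cite{Heintze09} for loop-level uniqueness, Theorem~\ref{theorem:Heintze_Gross} together with Lemma~\ref{lemma_reduction_of_isomorphism_to_loop_algebra} for passing between $L$ and $\widehat{L}$) goes through.
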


\noindent As the $\Ad$-invariant scalar product is invariant under conjugation, lemma~\ref{compact_type_negative_killing} has the following corollary:

\begin{corollary}
Let $\mathfrak{g}$ be a simple, complex Lie algebra. Any compact real form of the loop algebra ${L}(\mathfrak{g}, \sigma)$ has a
negative definite Cartan-Killing form.
\end{corollary}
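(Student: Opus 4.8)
The plan is to chain together two facts already established in the excerpt. The statement to prove is: for $\mathfrak{g}$ a simple complex Lie algebra, any compact real form of $L(\mathfrak{g}, \sigma)$ has negative definite Cartan-Killing form. The key inputs are Lemma~\ref{compact_type_negative_killing} (the \emph{standard} compact real form $L(\mathfrak{g}_{\mathbb{R}}, \sigma)$, built from a compact real form $\mathfrak{g}_{\mathbb{R}}$ of $\mathfrak{g}$, has negative definite Cartan-Killing form) together with Lemma~\ref{unicity_of_compact_real_forms} (for $\mathfrak{g}$ simple, the compact real form of $L(\mathfrak{g}, \sigma)$ is unique up to conjugation). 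The strategy is therefore: start from an arbitrary compact real form, invoke uniqueness to conjugate it onto the standard one, and then use invariance of the Cartan-Killing form under conjugation to transport the negative definiteness back.

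First I would let $\mathcal{U}$ be an arbitrary compact real form of $L(\mathfrak{g}, \sigma)$ and let $\mathcal{U}_{0} := L(\mathfrak{g}_{\mathbb{R}}, \sigma)$ denote the standard compact real form. By Lemma~\ref{unicity_of_compact_real_forms}, since $\mathfrak{g}$ is simple, there is an automorphism $\Phi$ of $L(\mathfrak{g}, \sigma)$ (a conjugation) with $\Phi(\mathcal{U}_{0}) = \mathcal{U}$. Next I would record that the Cartan-Killing form $B_{L(\mathfrak{g}, \sigma)}$ is $\Ad$-invariant, hence invariant under any automorphism arising from conjugation; this is exactly the remark preceding the corollary in the excerpt. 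Concretely, for all $f, g \in \mathcal{U}_{0}$ one has $B_{L(\mathfrak{g}, \sigma)}(\Phi f, \Phi g) = B_{L(\mathfrak{g}, \sigma)}(f, g)$.

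Then the computation is immediate: for any nonzero $u \in \mathcal{U}$, write $u = \Phi(f)$ with $f \in \mathcal{U}_{0}$ nonzero (possible since $\Phi$ is a bijection onto $\mathcal{U}$), and compute
\begin{displaymath}
B_{L(\mathfrak{g}, \sigma)}(u, u) = B_{L(\mathfrak{g}, \sigma)}(\Phi f, \Phi f) = B_{L(\mathfrak{g}, \sigma)}(f, f) < 0\,,
\end{displaymath}
where the last inequality is Lemma~\ref{compact_type_negative_killing} applied to $\mathcal{U}_{0}$. Since $u$ was an arbitrary nonzero element of $\mathcal{U}$, the restriction of $B_{L(\mathfrak{g}, \sigma)}$ to $\mathcal{U}$ is negative definite, which is the claim.

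I do not anticipate a serious obstacle, as the corollary is essentially a formal consequence of the two cited lemmas. The one point requiring a little care is the invariance step: one must confirm that the conjugation $\Phi$ furnished by the uniqueness lemma genuinely preserves the Cartan-Killing form, i.e.\ that it is realized by an $\Ad$-type automorphism rather than merely an abstract Lie algebra isomorphism. The excerpt supplies exactly this justification in the sentence preceding the corollary (``As the $\Ad$-invariant scalar product is invariant under conjugation''), so the mild subtlety is already handled; the remainder is the routine transport of negative definiteness through a linear isomorphism.
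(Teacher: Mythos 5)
Your proposal is correct and follows exactly the paper's own route: the paper derives this corollary from Lemma~\ref{compact_type_negative_killing} (negative definiteness for the standard compact real form) combined with the uniqueness-up-to-conjugation statement of Lemma~\ref{unicity_of_compact_real_forms}, transported via the conjugation-invariance of the $\Ad$-invariant form --- precisely the chain you spell out. Your write-up merely makes explicit the bijection and definiteness-transport steps that the paper compresses into the single sentence preceding the corollary.
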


\noindent We extend these results to geometric affine Kac-Moody algebras:

\begin{theorem}
 Let $\mathfrak{g}$ be a reductive Lie algebra. The following assertions are equivalent:
 \begin{itemize}
\item $\mathfrak{g}$ is semisimple;
\item the loop algebra $L(\mathfrak{g}_{\mathbb{C}},\sigma)$ has a compact real forms which is unique up to conjugation;
\item the geometric affine Kac-Moody algebra $\widehat{L}(\mathfrak{g}_{\mathbb{C}},\sigma)$ has a compact real forms which is unique up to conjugation.
\end{itemize}
\end{theorem}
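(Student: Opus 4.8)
The plan is to prove that $\mathfrak{g}$ semisimple is equivalent to each of the last two conditions separately, i.e.\ to establish both equivalences by showing that semisimplicity is exactly the condition needed for existence \emph{and} uniqueness of a compact real form, reducing everything to the simple case already settled in Lemma~\ref{unicity_of_compact_real_forms}. Throughout I would use the decomposition $\mathfrak{g}_{\mathbb{C}}=\mathfrak{g}_{a,\mathbb{C}}\oplus\bigoplus_{i}\mathfrak{g}_{i,\mathbb{C}}$ of the reductive algebra into its abelian summand and its simple ideals, together with the induced decomposition $L(\mathfrak{g}_{\mathbb{C}},\sigma)=L(\mathfrak{g}_{a,\mathbb{C}},\mathrm{Id})\oplus\bigoplus_{i}L(\mathfrak{g}_{i,\mathbb{C}},\sigma_{i})$ from Lemma~\ref{decomposition of loop algebras}. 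Since the finite-dimensional Cartan-Killing form is block-diagonal with respect to this decomposition and the loop form is obtained by integration, the Cartan-Killing form of $L(\mathfrak{g}_{\mathbb{C}},\sigma)$ is the orthogonal sum of the forms of the factors.

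First I would treat the implication \emph{semisimple $\Rightarrow$ existence and uniqueness}, for both the loop algebra and its extension. Existence is immediate: the product of the compact real forms of the simple factors supplied by Lemma~\ref{unicity_of_compact_real_forms} has negative definite Cartan-Killing form by Lemma~\ref{compact_type_negative_killing}, and its $2$-dimensional extension is a compact real form of $\widehat{L}(\mathfrak{g}_{\mathbb{C}},\sigma)$. For uniqueness I would argue that any compact real form $\mathcal{U}$ has negative definite Cartan-Killing form; by Lemma~\ref{lemma:real_form_product} its intersection with each factor $L(\mathfrak{g}_{i,\mathbb{C}},\sigma_{i})$ is a real form of that factor, and the restriction of a negative definite form to a subspace stays negative definite, so each such intersection is a compact real form of a \emph{simple} factor. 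These are unique up to conjugation by Lemma~\ref{unicity_of_compact_real_forms}, and assembling the factorwise conjugations yields a conjugation of $\mathcal{U}$ onto the standard form; carried through the extension, the same assembly gives uniqueness for $\widehat{L}(\mathfrak{g}_{\mathbb{C}},\sigma)$.

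It then remains to prove the contrapositive directions: if $\mathfrak{g}$ is reductive but \emph{not} semisimple, so $\mathfrak{g}_{a,\mathbb{C}}\neq 0$, then neither of the last two statements holds. For the loop algebra the argument is clean: the Cartan-Killing form of $L(\mathfrak{g}_{a,\mathbb{C}},\mathrm{Id})$ vanishes identically (by the proposition on abelian loop algebras), so the abelian loops lie in the radical of the Cartan-Killing form of $L(\mathfrak{g}_{\mathbb{C}},\sigma)$; the form is therefore degenerate and its restriction to \emph{any} real form stays degenerate, ruling out a negative definite---hence compact---real form altogether. For the geometric affine Kac-Moody algebra a compact real form still exists (a compact real form $\mathfrak{g}_{\mathbb{R}}$ of the reductive $\mathfrak{g}_{\mathbb{C}}$ exists, giving $\widehat{L}(\mathfrak{g}_{\mathbb{R}},\sigma)$), so here the obstruction is uniqueness rather than existence, and this is where I expect the real work to lie. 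The main obstacle is to show that distinct real structures on the abelian factor produce non-conjugate compact real forms: I would use the Heintze-Gro\ss{} normal form of Theorem~\ref{theorem:Heintze_Gross} to see that any conjugating automorphism acts on $L(\mathfrak{g}_{a,\mathbb{C}},\mathrm{Id})$ only through a curve of automorphisms of $\mathfrak{g}_{a,\mathbb{C}}$, and then argue that---because the abelian ideal carries no inner automorphisms and its trivial bracket imposes no rigidity---these conjugations cannot identify all real forms of the abelian factor. Hence uniqueness fails, completing the two contrapositive implications and thereby the stated equivalence.
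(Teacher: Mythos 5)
Your treatment of the loop-algebra statement (both directions) and of the implication ``semisimple $\Rightarrow$ existence and uniqueness'' for $\widehat{L}(\mathfrak{g}_{\mathbb{C}},\sigma)$ follows the paper's own route: decompose via Lemma~\ref{lemma:splitting} and Lemma~\ref{lemma:real_form_product}, apply Lemma~\ref{unicity_of_compact_real_forms} factorwise, and kill existence on an abelian summand because the Cartan-Killing form vanishes there. The genuine gap is in your last step, the non-semisimple direction for the geometric affine Kac-Moody algebra. You assert that a compact real form of $\widehat{L}(\mathfrak{g}_{\mathbb{C}},\sigma)$ still exists, namely $\widehat{L}(\mathfrak{g}_{\mathbb{R}},\sigma)$, so that the obstruction must be uniqueness. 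This conflicts with the paper's definitions: a compact real form of a complex affine Kac-Moody algebra is by definition a subalgebra conjugate to $\widehat{L}(\mathfrak{g}_{\mathbb{R}},\sigma)$ where $\mathfrak{g}_{\mathbb{R}}$ is a \emph{compact} real form of $\mathfrak{g}_{\mathbb{C}}$, and compactness is tied throughout the paper to negative definiteness of the Cartan-Killing form; a reductive but non-semisimple $\mathfrak{g}_{\mathbb{C}}$ admits no such real form, since the form vanishes identically on the abelian summand. Hence existence fails for $\widehat{L}(\mathfrak{g}_{\mathbb{C}},\sigma)$ for exactly the same reason it fails for the loop algebra --- this is how the paper closes its proof, by viewing compact real forms of the Kac-Moody algebra as extensions by $c$ and $d$ of compact real forms of the loop algebra --- and no uniqueness discussion is needed.

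Moreover, even granting your more liberal reading of ``compact'' on the abelian summand, the non-uniqueness argument you sketch would not go through. Because the abelian ideal has trivial bracket, \emph{every} invertible linear map of $L(\mathfrak{g}_{a,\mathbb{C}},\mathrm{Id})$ is a Lie algebra automorphism, so the absence of rigidity you invoke enlarges the supply of potential conjugations instead of restricting it: already in finite dimensions all real forms of an abelian complex Lie algebra are conjugate under $\mathrm{GL}(n,\mathbb{C})$, in sharp contrast to the semisimple case. To decide whether two real forms of the full Kac-Moody algebra that differ on the abelian part are conjugate, you would have to determine which of these many automorphisms are compatible with the cocycle $\omega$ and the derivation $d$, i.e.\ which extend in the sense of Theorem~\ref{theorem:Heintze_Gross}; your proposal asserts the negative without this analysis, and the assertion may well be false. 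The last third of your proof should therefore be replaced by the definitional existence argument above.
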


\begin{proof}
\begin{itemize}
\item Let $\mathfrak{g}_{\mathfrak{C}}:=\bigoplus_{i=0}^n \mathfrak{g}_i$ such that $\mathfrak{g}_0$ is Abelian and $\mathfrak{g}_i$ is simple for $i\geq 0$. We study first the loop algebra case: By lemma~\ref{lemma:splitting} we have the decomposition
$$L(\mathfrak{g}_{\mathfrak{C}}, \sigma)=\bigoplus_{i=0}^n L(\mathfrak{g}_i, \sigma_i)\,.$$
By lemma~\ref{lemma:real_form_product} any real form of $L_{\textrm{real}}(\mathfrak{g}, \sigma)\subset L(\mathfrak{g}_{\mathfrak{C}}, \sigma)$ decomposes into a direct sum 
$$L_{\textrm{real}}(\mathfrak{g}, \sigma)=\bigoplus_{i=0}^n L_{\textrm{real}}(\mathfrak{g}_i, \sigma_i)\,.$$
Lemma~\ref{unicity_of_compact_real_forms} assures for each factor $L(\mathfrak{g}_i, \sigma_i)$ for $i\geq 1$ and $\mathfrak{g}_i$ simple the existence of some (up to conjugation unique) compact real form $L_{c}(\mathfrak{g}_i, \sigma_i)$. In contrast, as $\mathfrak{g}_0$ is Abelian, then the Cartan-Killing form vanishes identically; hence the algebra $L(\mathfrak{g}_{0})$ does not have a compact real form. Thus we have proven that $L(\mathfrak{g}_{\mathbb{C}},\sigma)$ has a compact real form, which is unique up to conjugation iff $\mathfrak{g}_0=0$. 
\item  To extend the result to geometric affine Kac-Moody algebras, we have to check that the compact real form allows for an extensions by $c$ and $d$.  This follows from the explicit construction: Conjugating the compact loop algebra into the standard compact real form $\widehat{L}(\mathfrak{g}_c, \sigma)$ where $\mathfrak{g}_c$ denotes the compact real form. Then the extension can be constructed for each factor separately: We get in each factor imaginary coefficients, and hence extensions $i\mathbb{R}c\oplus i\mathbb{R}d$.   
\end{itemize} 
\end{proof}

\begin{proposition}
Let $\mathfrak{g}$ be semisimple and $\widehat{L}(\mathfrak{g}, \sigma)_D$ be a real form of the non-compact type. Let $\widehat{L}(\mathfrak{g}, \sigma)_D=\mathcal{K}\oplus \mathcal{P}$ be a Cartan decomposition. The Cartan-Killing form is negative definite on $\mathcal{K}$ and positive definite on $\mathcal{P}$.
\end{proposition}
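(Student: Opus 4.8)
The plan is to reduce the claim to the well-known finite-dimensional statement about Cartan decompositions of semisimple Lie algebras, transported to the loop algebra level via the integral definition of the Cartan-Killing form and the dualization construction of Theorem~\ref{theoremofHeintzegross}. Recall that the non-compact real form arises as $\widehat{L}(\mathfrak{g}, \sigma)_D = \mathcal{K} \oplus i\mathcal{P}$, where $\mathcal{U} = \widehat{L}(\mathfrak{g}_{\mathbb{R}}, \sigma) = \mathcal{K} \oplus \mathcal{P}$ is the compact real form, $\mathcal{K}$ and $\mathcal{P}$ being the $+1$ and $-1$ eigenspaces of the involution $\widehat{\varphi}$. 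The Cartan decomposition of the non-compact form is then $\widehat{L}(\mathfrak{g}, \sigma)_D = \mathcal{K} \oplus (i\mathcal{P})$, so I must show $B$ is negative definite on $\mathcal{K}$ and positive definite on $i\mathcal{P}$.

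First I would invoke the preceding lemma (Lemma~\ref{compact_type_negative_killing}) together with its corollary: on the compact real form $\mathcal{U}$ the Cartan-Killing form $B_{L(\mathfrak{g}, \sigma)}$ is negative definite. Since $\mathcal{K} \subset \mathcal{U}$, the form is automatically negative definite on $\mathcal{K}$, giving the first half of the claim immediately. For the second half, I would exploit the fact that the involution $\widehat{\varphi}$ is an automorphism, hence preserves the Cartan-Killing form, which forces the eigenspaces $\mathcal{K}$ and $\mathcal{P}$ to be $B$-orthogonal (the standard argument: for $x \in \mathcal{K}$, $y \in \mathcal{P}$ one has $B(x,y) = B(\widehat{\varphi}x, \widehat{\varphi}y) = B(x, -y) = -B(x,y)$, whence $B(x,y) = 0$). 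In particular $B$ is negative definite on $\mathcal{P}$ as well, being the restriction to a subspace of a form negative definite on $\mathcal{U}$.

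Next I would compute the effect of the factor $i$ on the form restricted to $\mathcal{P}$. Because the Cartan-Killing form is complex-bilinear on the complexification $L(\mathfrak{g}_{\mathbb{C}}, \sigma)$, for $p \in \mathcal{P}$ we have $B(ip, ip) = i^2 B(p,p) = -B(p,p)$. Since $B(p,p) < 0$ for $p \neq 0$, this yields $B(ip, ip) > 0$, so $B$ is positive definite on $i\mathcal{P}$, which is the second half of the claim. The reduction to simple factors via Lemma~\ref{lemma:splitting} and Lemma~\ref{lemma:real_form_product} handles the semisimple (non-simple) case, as the Cartan-Killing form respects the direct sum decomposition into simple ideals and the dualization proceeds factorwise.

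The main technical point to be careful about is the interaction between the real structure and the complex bilinearity of $B$: one must be precise that $B$ denotes the complex-bilinear extension (not a Hermitian form) when writing $B(ip, ip) = -B(p,p)$, and that the restriction of this complex-bilinear form to the real subspace $\mathcal{U}$ is a genuine real-valued negative-definite form. A secondary subtlety is that the central and derivation directions $\mathbb{R}c$ and $\mathbb{R}d$ lie in $\mathcal{P}$ (they are dualized to $i\mathcal{P}$), but since the proposition is stated for the loop-algebra Cartan-Killing form $B_{L(\mathfrak{g}, \sigma)}$ which is degenerate on the $c$, $d$ directions, I would note that the definiteness statement is to be read on the loop-algebra part, or else one works with the extended invariant form on $\widehat{L}(\mathfrak{g}, \sigma)$; I expect this bookkeeping about the two-dimensional extension to be the only genuinely delicate step, the rest being the transport of the classical finite-dimensional computation through the integral.
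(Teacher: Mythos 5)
Your proof is correct, but it takes a genuinely different route from the paper's. The paper argues by direct computation: it reduces to $\sigma=\Id$, writes the involution in the standard form $\varphi(f)(t)=\varphi_0(f(-t))$, expands an element of the fixed point algebra in a Fourier series $f(t)=\sum_n k_n\cos(nt)+\sum_n p_n\sin(nt)$ with $k_n\in\mathfrak{k}$, $p_n\in\mathfrak{p}$ (the $\pm 1$\ndash eigenspaces of $\varphi_0$ in the compact algebra), and integrates term by term using orthogonality of the trigonometric system together with definiteness of the finite dimensional Killing form; the statement on $\mathcal{P}$ is obtained ``analogously'', and the twisted case is handled separately by embedding the twisted loop algebra into an untwisted one and checking that the $\mathcal{K}$- and $\mathcal{P}$-parts correspond. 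You instead quote two facts the paper has already established --- negative definiteness of $B_{L(\mathfrak{g},\sigma)}$ on the compact real form (Lemma~\ref{compact_type_negative_killing} and its corollary) and the duality $\mathcal{U}=\mathcal{K}\oplus\mathcal{P}\mapsto\mathcal{U}^*=\mathcal{K}\oplus i\mathcal{P}$ of Theorem~\ref{theoremofHeintzegross} --- and then get negative definiteness on $\mathcal{K}$ by restriction and positive definiteness on $i\mathcal{P}$ from complex bilinearity, $B(ip,ip)=-B(p,p)>0$. Your route is shorter, avoids the sign bookkeeping of the Fourier computation (where the paper's displayed formula in fact carries a confusing minus sign in front of the $\sin^2$-term, mixing up the computation on $\mathcal{K}$ with the one on $i\mathcal{P}$), and treats the twisted and the semisimple non-simple cases uniformly, since the quoted lemmas hold for arbitrary $\sigma$ and the Killing form respects the splitting into ideals via Lemmas~\ref{lemma:splitting} and~\ref{lemma:real_form_product}. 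What the paper's explicit computation buys is a concrete description of how $\mathcal{K}$ sits inside the loop algebra (cosine coefficients in $\mathfrak{k}$, sine coefficients in $\mathfrak{p}$), which is essentially what reappears in the $\mathfrak{a}_1^{(1)}$ examples of the final section. Your closing caveat about the degenerate directions $\mathbb{R}c\oplus\mathbb{R}d$ is well taken; the paper's own proof also works purely at the loop-algebra level and passes over this point in silence, so flagging it is an improvement rather than a deviation.
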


\begin{proof}
Suppose first $\sigma$ is the identity. Let $\varphi:L(\mathfrak{g}, \sigma)\longrightarrow L(\mathfrak{g}, \sigma)$ be an automorphism. Then without loss of generality $\varphi(f)=\varphi_0(f(-t))$ (see~\cite{Heintze09}, section 2). Let $\mathfrak{g}=\mathfrak{k}\oplus \mathfrak{p}$ be the decomposition of $\mathfrak{g}$ into the $\pm 1$-eigenspaces of $\varphi_0$. Then
$f\in \textrm{Fix}(\varphi)$ iff its Taylor expansion satisfies
$$\sum_n a_n e^{int}= \sum_n \varphi_0(a_{-n})e^{int}\,.$$
Let $a_n=k_n\oplus p_n$ be the decomposition of $a_n$ into the $\pm 1$ eigenspaces with respect to $\varphi_0$. 
Hence $$f(t)=\sum_n k_n \cos(nt)+\sum_n p_n \sin(nt)\,.$$ 
Then, using bilinearity and orthonormality of $\{\cos(nt), \sin(nt)\}$,  we can calculate $B_{\mathfrak{g}}$:
$$B_\mathfrak{g}=\int_0^{2\pi}\sum_n \cos^{2}(nt)B(k_n, k_n)-\int_0^{2\pi}\sum_n \sin^{2}(nt)B(p_n, p_n)\,.$$
Hence $B_{\mathfrak{g}}$ is negative definite on $\textrm{Fix}(\varphi)$. Analogously one calculates that $B_{\mathfrak{g}}$ is positive definite on the $-1$-eigenspace of $\varphi$. 
If $\sigma\not=Id$ then a twisted loop algebra can always be understood as a subalgebra of an untwisted one, fixed by some (diagram) automorphism (see our discussion in section 2 and for additional details~\cite{Carter05}, pp.~451). This construction holds in the category of loop algebras as in the category of Kac-Moody algebras. It is then a straightforward verification, that the $\mathcal{K}$-part (resp. $\mathcal{P}$-part) of the twisted algebra is mapped into the $\mathcal{K}$-part (resp. $\mathcal{P}$-part) of the non-twisted algebra. Hence the results about the Killing form extend. 
\end{proof}

\section{Orthogonal symmetric affine Kac-Moody algebras}
\label{sect:osaka}

In this section we introduce and study \emph{o}rthogonal \emph{s}ymmetric \emph{a}ffine \emph{K}ac-Moody \emph{a}lgebras (OSAKAs) and describe their classification:

\begin{definition}
An \emph{orthogonal symmetric affine Kac-Moody algebra (OSAKA)} is a pair $\left(\widehat{L}(\mathfrak{g}, \sigma), \widehat{L}(\rho)\right)$ such that
\begin{enumerate}
	 \item $\widehat{L}(\mathfrak{g}, \sigma)$ is a real form of an affine geometric Kac-Moody algebra,
	 \item $\widehat{L}(\rho)$ is an involutive automorphism of $\widehat{L}(\mathfrak{g}, \sigma)$,
	 \item If $\mathfrak{g}=\mathfrak{g}_s\oplus \mathfrak{g}_a$ then the intersection $\textrm{Fix}(\widehat{L}(\rho))\cap{\mathfrak{g}_s}$ is a compact Kac-Moody algebra or a compact loop algebra and the intersection $\textrm{Fix}(\widehat{L}(\rho))\cap{\mathfrak{g}_a}=0$.
\end{enumerate}
\end{definition}

\begin{definition}
An OSAKA is called effective if $\textrm{Fix}(\widehat{L}(\rho))\cap\mathfrak{z}=0$ where $\mathfrak{z}$ denotes the center of $\widehat{L}(\mathfrak{g}, \sigma)$.
\end{definition}

\begin{lemma}
Let $\left(\widehat{L}(\mathfrak{g}, \sigma), \widehat{L}(\rho)\right)$ be an effective OSAKA. Then $\widehat{L}(\rho)$ is of second type and $\textrm{Fix}(\widehat{L}(\rho))$ is a compact loop algebra. 
\end{lemma}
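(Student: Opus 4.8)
The plan is to read off everything from the normal form for involutions of $\widehat{L}(\mathfrak{g}, \sigma)$ supplied by Theorem~\ref{theorem:Heintze_Gross}, and then feed in the effectiveness hypothesis together with condition~$3$ of the OSAKA axioms. The whole argument is controlled by how the central element $c$ and the derivation $d$ transform under $\widehat{L}(\rho)$.

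First I would settle the type. As an involutive automorphism of $\widehat{L}(\mathfrak{g}, \sigma)$ the map $\widehat{L}(\rho)$ is admissible, so Theorem~\ref{theorem:Heintze_Gross} applies and produces a well-defined scalar $\epsilon_{\rho}\in\{+1,-1\}$ with $\widehat{L}(\rho)(c)=\epsilon_{\rho}\,c$; by definition $\epsilon_{\rho}=1$ is the first type and $\epsilon_{\rho}=-1$ the second. Since $c$ lies in the center $\mathfrak{z}$ of $\widehat{L}(\mathfrak{g}, \sigma)$, the first type would give $0\neq c\in\textrm{Fix}(\widehat{L}(\rho))\cap\mathfrak{z}$, violating effectiveness. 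Hence $\epsilon_{\rho}=-1$ and $\widehat{L}(\rho)$ is of the second type, which is the first assertion.

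Next I would compute the fixed algebra. Write a general element as $f+sc+td$ with $f\in L(\mathfrak{g}, \sigma)$ and $s,t\in\mathbb{R}$, and use the second-type normal form $\widehat{L}(\rho)(c)=-c$, $\widehat{L}(\rho)(d)=-d+w+\gamma c$ (with $w\in L(\mathfrak{g}, \sigma)$, $\gamma\in\mathbb{R}$), and $\widehat{L}(\rho)(f)=\varphi(f)+\mu(f)c$, where $\varphi$ is the involution induced on $L(\mathfrak{g}, \sigma)$ by Lemma~\ref{lemma_reduction_of_isomorphism_to_loop_algebra}. Comparing $d$-components in $\widehat{L}(\rho)(f+sc+td)=f+sc+td$ forces $-t=t$, so $t=0$; the loop component then gives $\varphi(f)=f$ and the $c$-component gives $s=\frac{1}{2}\mu(f)$. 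Thus $\textrm{Fix}(\widehat{L}(\rho))=\{\,f+\frac{1}{2}\mu(f)\,c \mid f\in\textrm{Fix}(\varphi)\,\}$, which contains no $d$-term and no nonzero multiple of $c$; moreover the projection onto the loop part is a Lie algebra isomorphism onto the loop subalgebra $\textrm{Fix}(\varphi)\subset L(\mathfrak{g}, \sigma)$, since it annihilates the central $c$-direction and recovers the loop bracket on the nose.

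Finally I would identify $\textrm{Fix}(\widehat{L}(\rho))$. Condition~$3$ of the OSAKA axioms already tells us that its semisimple part is either a compact Kac-Moody algebra or a compact loop algebra, while its Abelian part vanishes. But a compact Kac-Moody algebra is the full extension of a compact loop algebra by a central element and a derivation, whereas the computation above shows that $\textrm{Fix}(\widehat{L}(\rho))$ has no $d$-component and no nonzero multiple of $c$; this excludes the first alternative and leaves $\textrm{Fix}(\widehat{L}(\rho))$ a compact loop algebra, as claimed. The step I expect to be the main obstacle is this last identification: one must be sure that ``having neither a $c$- nor a $d$-direction'' genuinely excludes the compact Kac-Moody option rather than merely a convenient model of it, and it is precisely the second-type conclusion of the first step — not any incidental normalization — that strips the central and derivation directions from the fixed algebra.
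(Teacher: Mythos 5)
Your proof is correct and takes essentially the same route as the paper: effectiveness forces $\widehat{L}(\rho)(c)=-c$ (hence second type), the Heintze--Gross normal form (Theorem~\ref{theorem:Heintze_Gross}) then eliminates the $d$-direction from the fixed algebra, and condition~3 of the OSAKA definition identifies it as a compact loop algebra. The only real difference is in the middle step: where the paper conjugates $\widehat{L}(\rho)$ into a form with $\widehat{L}(\rho)(d)=-d$ and concludes $\textrm{Fix}(\widehat{L}(\rho))\subset L(\mathfrak{g},\sigma)$ literally, you keep the unnormalized form and exhibit $\textrm{Fix}(\widehat{L}(\rho))=\{f+\tfrac{1}{2}\mu(f)c \mid f\in\textrm{Fix}(\varphi)\}$ projecting isomorphically onto a loop subalgebra, which handles explicitly the $\mu$-term that the paper's conjugation absorbs silently.
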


\begin{proof}
By definition $c$ is central - hence $c\in \mathfrak{z}$. Hence for the OSAKA $\left(\widehat{L}(\mathfrak{g}, \sigma), \widehat{L}(\rho)\right)$ to be effective, we need $c\not\in \textrm{Fix}\left(\widehat{L}(\rho)\right)$. From the classification of involutions of affine Kac-Moody algebras we know that $\widehat{L}(\rho)(c)\in \{\pm c\}$. Hence $\widehat{L}(\rho)(c)=-c$ and the involution $\widehat{L}(\rho)$ is of second type.
By the classification of involutions (see~\cite{Heintze09}, section 6)  $\widehat{L}(\rho)(c)=-c$ implies that $\widehat{L}(\rho)$ is conjugate to an involution satisfying $\widehat{L}(\rho)(d)=-d$. Hence $\textrm{Fix}(\widehat{L}(\rho))\subset L(\mathfrak{g}, \sigma)$.
\end{proof}

OSAKAs appear in various kinds: the main distinction is as in the finite dimensional case the one between OSAKAs of the compact type and OSAKAs of the non-compact type. 
In between, we have the special case of OSAKAs of Euclidean type.

\begin{definition}
\label{types of OSAKAs}
Let $(\widehat{L}(\mathfrak{g}, \sigma), \widehat{L}(\rho))$ be an OSAKA. Let $\widehat{L}(\mathfrak{g}, \sigma)=\mathcal{K}\oplus \mathcal{P}$ be the decomposition of $\widehat{L}(\mathfrak{g}, \sigma)$ into the eigenspaces of $\widehat{L}(\mathfrak{g}, \sigma)$ of eigenvalue $+1$ resp.\ $-1$.
\begin{enumerate}
	\item If $\widehat{L}(\mathfrak{g}, \sigma)$ is a compact real affine Kac-Moody algebra, it is said to be of the compact type.
	\item If $\widehat{L}(\mathfrak{g}, \sigma)$ is a non-compact real affine Kac-Moody algebra, $\widehat{L}(\mathfrak{g}, \sigma)=\mathcal{U}\oplus \mathcal{P}$ is a Cartan decomposition of $\widehat{L}(\mathfrak{g}, \sigma)$.
	\item If $L(\mathfrak{g},\sigma)$ is Abelian, it is said to be of Euclidean type.
\end{enumerate}
\end{definition}

\begin{definition}
An OSAKA $(\widehat{L}(\mathfrak{g}, \sigma), \widehat{L}(\rho))$ is called \emph{semisimple} if $\widehat{L}(\mathfrak{g}, \sigma)$ is a semisimple geometric affine Kac-Moody algebra.
\end{definition}

OSAKAs of the compact type and of the non compact type are semisimple, OSAKAs of the Euclidean type are not semisimple.

\begin{definition}
An OSAKA $(\widehat{L}(\mathfrak{g}, \sigma), \widehat{L}(\rho))$ is called \emph{irreducible} iff it has no non-trivial ideal which is isomorphic to a Kac-Moody subalgebra and invariant under $\widehat{L}(\rho)$.
\end{definition}

\section{Irreducible OSAKAs}

In this section we describe the classification of irreducible OSAKAs.

\subsection*{Irreducible OSAKAs of the compact type}

We describe first the irreducible OSAKAs of compact type. They consist of two classes: 

\begin{itemize}
\item {\bf OSAKAs of type \boldmath$I$:}
 The first class consists of compact real forms $\widehat{L}(\mathfrak{g},{\sigma})$, where $\mathfrak{g}$ is a simple compact real Lie algebra together with an involution of the second kind $\widehat{\mathfrak{\rho}}$. As $\widehat{\mathfrak{\rho}}(c)=-c$ and $\widehat{\mathfrak{\rho}}(d)=-d$ up to conjugation, the fixed point algebra $\textrm{Fix}(\widehat{\mathfrak{\rho}})\subset L(\mathfrak{g},{\sigma})$. Hence the pair $\left(\widehat{L}(\mathfrak{g},{\sigma}), \widehat{L}(\rho)\right)$ is an OSAKA. A complete classification (up to conjugation) can be given as follows: As each Kac-Moody algebra has some (up to conjugation unique) compact real form, a classification consists in running through all pairs consisting of irreducible affine geometric Kac-Moody algebras $\widehat{L}(\mathfrak{g},{\sigma})$ and all conjugate linear involutions of the second kind of $\widehat{L}(\mathfrak{g},{\sigma})$.
 Irreducible affine geometric Kac-Moody algebras are in bijection with affine Kac-Moody algebras. A complete list consists thus of the algebras of the (non twisted) types $\mathfrak{a}_{n}^{(1)}, n\geq 1$, $\mathfrak{b}_{n}^{(1)}, n\geq 2$, $\mathfrak{c}_{n}^{(1)}, n\geq 3$, $\mathfrak{d}_n^{(1)}, n\geq 3$, $\mathfrak{e}_6^{(1)}$, $\mathfrak{e}_{7}^{(1)}$, $\mathfrak{e}_{8}^{(1)}$, $\mathfrak{f}_{4}^{(1)}$, $\mathfrak{g}_{2}^{(1)}$ and of the algebras of the (twisted) types $\widetilde{\mathfrak{a}}_{1}'$, $\widetilde{\mathfrak{c}}_{l}'$, $\widetilde{\mathfrak{b}}_l^t$, $\widetilde{\mathfrak{c}}_{l}^t$, $\widetilde{\mathfrak{f}}_{4}^t$, and $\widetilde{\mathfrak{g}}_{2}^t$. 
 
 A complete list of these involutions for the above algebras is given in~\cite{Heintze09}, section 6. For the infinite series the number of involutions grows roughly proportional to $n^2$ (with the only exception of $\mathfrak{a}_{n}^{(2)}$).  $\mathfrak{e}_6^{(1)}$ has $9$ involutions of second kind, $\mathfrak{e}_{7}^{(1)}$ has $10$ involutions of second kind, $\mathfrak{e}_{8}^{(1)}$ has $6$ involutions of second kind, $\mathfrak{f}_{4}^{(1)}$ has $6$ involutions of second kind, and $\mathfrak{g}_{2}^{(1)}$ has $3$ involutions of second kind.

\item {\bf OSAKAs of type \boldmath$II$:}
Let $\mathfrak{g}_{\mathbb{R}}$ be a simple real Lie algebra of the compact type. The second class consists of pairs of an affine Kac-Moody algebra
$\widehat{L}(\mathfrak{g}_{\mathbb{R}}\times \mathfrak{g}_{\mathbb{R}})$ together with an involution $\widehat{\varphi}$ of the second kind, that switches the two factors. 
\begin{align*}
\varphi:\quad \widehat{L}(\mathfrak{g}_{\mathbb{R}}\times \mathfrak{g}_{\mathbb{R}}, \sigma\oplus\sigma)&\longrightarrow \widehat{L}(\mathfrak{g}_{\mathbb{R}}\times \mathfrak{g}_{\mathbb{R}},\sigma\oplus\sigma)\\
(f(t), g(t), r_c, r_d)&\mapsto (g(-t), f(-t), -r_c, -r_d)
\end{align*}
Hence the fixed point algebra consists of elements $(f(t), f(-t), 0,0)$, which is isomorphic to $L(\mathfrak{g}, \sigma)$.
 As each Kac-Moody algebra has a (unique up to conjugation) compact real form, a complete classification consists in running through all irreducible geometric affine Kac-Moody algebras. Irreducible affine geometric Kac-Moody algebras are in bijection with affine Kac-Moody algebras. A complete list consists thus of the algebras of the (non twisted) types $\mathfrak{a}_{n}^{(1)}, n\geq 1$, $\mathfrak{b}_{n}^{(1)}, n\geq 2$, $\mathfrak{c}_{n}^{(1)}, n\geq 3$, $\mathfrak{d}_n^{(1)}, n\geq 3$, $\mathfrak{e}_6^{(1)}$, $\mathfrak{e}_{7}^{(1)}$, $\mathfrak{e}_{8}^{(1)}$, $\mathfrak{f}_{4}^{(1)}$, $\mathfrak{g}_{2}^{(1)}$ and of the algebras of the (twisted) types $\widetilde{\mathfrak{a}}_{1}'$, $\widetilde{\mathfrak{c}}_{l}'$, $\widetilde{\mathfrak{b}}_l^t$, $\widetilde{\mathfrak{c}}_{l}^t$, $\widetilde{\mathfrak{f}}_{4}^t$, and $\widetilde{\mathfrak{g}}_{2}^t$. 
\end{itemize}

\begin{proposition}
\label{prop:osakas_compact}
The OSAKAs described above are precisely all irreducible OSAKAs of compact type.
\end{proposition}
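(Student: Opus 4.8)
The plan is to establish both inclusions separately: that each algebra in the two listed families is an irreducible OSAKA of compact type, and conversely that every irreducible OSAKA of compact type arises in this way. The forward direction is the substantial one, and I would carry it out by stripping off the central element $c$ and the derivation $d$ so as to reduce everything to the already-classified action of $\widehat{L}(\rho)$ on the underlying loop algebra. First I would record two preliminary reductions. Compactness of $\widehat{L}(\mathfrak{g}, \sigma)$ forces $\mathfrak{g}$ to be semisimple: a nontrivial abelian summand $\mathfrak{g}_a$ would contribute a loop algebra with identically vanishing Cartan--Killing form, contradicting negative-definiteness. Restricting to effective OSAKAs, the preceding lemma on effective OSAKAs gives $\widehat{L}(\rho)(c)=-c$ and, up to conjugation, $\widehat{L}(\rho)(d)=-d$, so that $\widehat{L}(\rho)$ is of the second type and $\textrm{Fix}(\widehat{L}(\rho))\subset L(\mathfrak{g},\sigma)$; by Lemma~\ref{lemma_reduction_of_isomorphism_to_loop_algebra} the involution then descends to an honest order-two automorphism of $L(\mathfrak{g},\sigma)$.

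Next I would apply Lemma~\ref{decomposition of loop algebras} to this induced automorphism. It decomposes $L(\mathfrak{g},\sigma)$ into $\widehat{L}(\rho)$-invariant ideals of type $I$ (a single simple factor preserved by the involution) and type $II$ (a product of simple factors cyclically permuted). Since $\widehat{L}(\rho)$ is an involution, a type $II$ orbit can only interchange $m=2$ isomorphic factors with $k=n/m=1$, so the induced automorphism on each simple factor is trivial and the action is a pure swap (accompanied by the parameter reflection $t\mapsto -t$ characteristic of the second type). The key step is to transport this loop-algebra decomposition to the full algebra: by Lemma~\ref{lemma:splitting} each orbit, extended by $\mathbb{F}c$, is an ideal of $\widehat{L}(\mathfrak{g},\sigma)$, and because $\mathbb{F}c$ and $\mathbb{F}d$ are $\widehat{L}(\rho)$-invariant (second type) this extended ideal is $\widehat{L}(\rho)$-invariant as well. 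Irreducibility of the OSAKA then forbids more than one orbit: a single type $I$ orbit forces $\mathfrak{g}$ simple and yields an OSAKA of type $I$, while a single type $II$ orbit forces $\mathfrak{g}\cong\mathfrak{g}_{\mathbb{R}}\times\mathfrak{g}_{\mathbb{R}}$ with the swapping involution and yields an OSAKA of type $II$, matching the two families exactly.

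For the converse I would verify that each listed pair really is an irreducible OSAKA of compact type. Compactness of the ambient algebra holds by construction, and Lemma~\ref{compact_type_negative_killing} confirms negative-definiteness of the Cartan--Killing form. The fixed-point computations already recorded in the text show that $\textrm{Fix}(\widehat{L}(\rho))$ is a compact loop algebra in both cases (isomorphic to $L(\mathfrak{g},\sigma)$ in the type $II$ case), so condition~(3) of the OSAKA definition is satisfied and $\textrm{Fix}\cap\mathfrak{g}_a=0$ holds vacuously since $\mathfrak{g}$ is semisimple. Irreducibility follows by running the splitting analysis in reverse: in the type $I$ case $\mathfrak{g}$ is simple, so $\widehat{L}(\mathfrak{g},\sigma)$ has no proper $\widehat{L}(\rho)$-invariant Kac-Moody ideal, and in the type $II$ case the two loop-algebra ideals are interchanged by $\widehat{L}(\rho)$, hence neither is invariant and no proper invariant Kac-Moody ideal exists.

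The hard part will be the passage between ideals of the loop algebra and ideals of the full Kac-Moody algebra, precisely because $c$ and $d$ are shared across all simple factors and are \emph{not} fixed by a second-type involution; one must check with care that the extended orbit ideal is simultaneously $\widehat{L}(\rho)$-invariant and genuinely isomorphic to a geometric affine Kac-Moody algebra in the sense demanded by the irreducibility definition, rather than merely to a derived subalgebra missing its derivation. A secondary point deserving justification is the reduction to the effective case, namely that first-kind involutions of compact real forms do not produce additional irreducible compact-type OSAKAs beyond those enumerated here.
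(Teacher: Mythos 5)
Your proposal is correct and follows essentially the same route as the paper's own proof: both directions are handled by first checking irreducibility of the two listed families (type $I$ via irreducibility of $\widehat{L}(\mathfrak{g},\sigma)$ for $\mathfrak{g}$ simple, type $II$ via the factor-swapping involution), and then proving completeness by descending $\widehat{L}(\rho)$ to the loop algebra through Lemma~\ref{lemma_reduction_of_isomorphism_to_loop_algebra} and invoking Lemma~\ref{decomposition of loop algebras} together with OSAKA-irreducibility to force either a single preserved simple factor or a single swapped pair. Your treatment is somewhat more detailed than the paper's (the explicit $m=2$, $k=1$ constraint, the transport of ideals via Lemma~\ref{lemma:splitting}, and the flagged reduction to the effective case, which the paper passes over with the bare assertion that any such involution is of second kind), but the underlying argument is the same.
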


\begin{proof}
Remark first that all OSAKAs listed above are irreducible. For OSAKAs of type $I$ this is due to the fact, that the affine geometric Kac-Moody algebras $\widehat{L}(\mathfrak{g}, \sigma)$ for $\mathfrak{g}$ simple are irreducible Kac-Moody algebras, for OSAKAs of type $II$ this is a consequence of the involution $\widehat{L}(\mathfrak{\rho})$ switching the two factors.

The crucial part of the proposition is thus, that this list includes all irreducible OSAKAs. Let $\left(\widehat{L}(\mathfrak{g}, \sigma), \widehat{L}(\rho)\right)$ be an OSAKA. 
\begin{itemize}
\item Assume that $\widehat{L}(\mathfrak{g}, \sigma)$ is irreducible. Then we are done as any automorphism has to be of second kind. 
\item So assume that $\widehat{L}(\mathfrak{g}, \sigma)$ is reducible. Let $\widehat{L}(\mathfrak{g}_i, \sigma)\subset \widehat{L}(\mathfrak{g}, \sigma)$ be an irreducible factor and let $\widehat{L}_{\widehat{L}(\rho)}(\mathfrak{g}_i, \sigma)$ be the orbit of $\widehat{L}(\mathfrak{g}_i, \sigma)$ in $\widehat{L}(\mathfrak{g}, \sigma)$ under $\widehat{L}(\rho)$. As $\widehat{L}(\mathfrak{g}, \sigma)$ is irreducible, we have that 

$$\widehat{L}_{\widehat{L}(\rho)}(\mathfrak{g}_i, \sigma)=\widehat{L}(\mathfrak{g}, \sigma)\,.$$

\noindent $\widehat{L}(\rho)$ being an involution, we know from lemma~\ref{lemma_reduction_of_isomorphism_to_loop_algebra}, that it induces an involution on the level of loop algebras ; hence using lemma~\ref{decomposition of loop algebras}, we get:

$$\widehat{L}_{\widehat{L}(\rho)}(\mathfrak{g}_i, \sigma)\subseteq \widehat{L}(\mathfrak{g}_i\oplus \mathfrak{g}_i, \sigma)\,.$$
Hence  $\left(\widehat{L}(\mathfrak{g}, \sigma), \widehat{L}(\rho)\right)$ is of type $II$.
\end{itemize}
\end{proof}


\subsection*{Irreducible OSAKAs of the non-compact type}
Besides the OSAKAs of the compact type there are the OSAKAs of the non-compact type.

\begin{itemize}
\item{\bf OSAKAs of type \boldmath$III$:} Let $\mathfrak{g}_{\mathbb{C}}$ be a complex semisimple Lie algebra, $\mathfrak{g}$ its compact real form and $\widehat{L}(\mathfrak{g}_{\mathbb{C}},\sigma)$ (resp. $\widehat{L}(\mathfrak{g},\sigma)$) the associated complex affine Kac-Moody algebra (resp. compact form).
This class consists of real forms of the non-compact type that are described as fixed point sets of involutions of second kind together with a special involution, called the \emph{Cartan involution}. This is the unique involution on $\widehat{L}(\mathfrak{g},\sigma)$, such that the decomposition into its $\pm 1$-eigenspaces $\mathcal{K}$ and $\mathcal{P}$ yields: $\mathcal{K}\oplus i \mathcal{P}$ is a real form of compact type of $\widehat{L}(\mathfrak{g}_{\mathbb{C}},\sigma)$. For a proof of the existence and uniqueness of the Cartan involution see~\cite{Heintze09}. 
A complete list consists thus of the algebras of the (non twisted) types $\mathfrak{a}_{n}^{(1)}, n\geq 1$, $\mathfrak{b}_{n}^{(1)}, n\geq 2$, $\mathfrak{c}_{n}^{(1)}, n\geq 3$, $\mathfrak{d}_n^{(1)}, n\geq 3$, $\mathfrak{e}_6^{(1)}$, $\mathfrak{e}_{7}^{(1)}$, $\mathfrak{e}_{8}^{(1)}$, $\mathfrak{f}_{4}^{(1)}$, $\mathfrak{g}_{2}^{(1)}$ and of the algebras of the (twisted) types $\widetilde{\mathfrak{a}}_{1}'$, $\widetilde{\mathfrak{c}}_{l}'$, $\widetilde{\mathfrak{b}}_l^t$, $\widetilde{\mathfrak{c}}_{l}^t$, $\widetilde{\mathfrak{f}}_{4}^t$, and $\widetilde{\mathfrak{g}}_{2}^t$.  A complete list of non-compact real form of the second kind of the above algebras coincides with a complete list of involutions of second kind for the above algebras - we refer to~\cite{Heintze09}, section 6. For the infinite series the number of involutions grows roughly proportional to $n^2$ (with the only exception of $\mathfrak{a}_{n}^{(2)}$).  $\mathfrak{e}_6^{(1)}$ has $9$ involutions of second kind, $\mathfrak{e}_{7}^{(1)}$ has $10$ involutions of second kind, $\mathfrak{e}_{8}^{(1)}$ has $6$ involutions of second kind, $\mathfrak{f}_{4}^{(1)}$ has $6$ involutions of second kind, and $\mathfrak{g}_{2}^{(1)}$ has $3$ involutions of second kind (see~\cite{Heintze09}).

\item{\bf OSAKAs of type \boldmath$IV$:} Let $\mathfrak{g}_{\mathbb{C}}$ be a complex semisimple Lie algebra. The fourth class consists of negative-conjugate real forms of $\widehat{L}(\mathfrak{g}_{\mathbb{C}}, \sigma)$. The involution is given by the complex conjugation $\widehat{L}(\rho_0)$ with respect to a compact real form of $L(\mathfrak{g}_{\mathbb{C}},\sigma)\cong L(\mathfrak{g}, \sigma)\oplus i L(\mathfrak{g}, \sigma)$. Hence they have the form: $L(\mathfrak{g}, \sigma)\oplus iL(\mathfrak{g},\sigma)\oplus\mathbb{R}c\oplus \mathbb{R}d$.
Let us remark that we can also take the complex Kac-Moody algebra $\widehat{L}(\mathfrak{g}_{\mathbb{C}}, \sigma)$ and define the involution $\widehat{\rho}_0$ as conjugation with respect to the real form of compact type $\widehat{L}(\mathfrak{g}_{\mathbb{R}}, \sigma)$. Nevertheless the pair $(\widehat{L}(\mathfrak{g}_{\mathbb{C}}, \sigma), \widehat{\rho}_0)$ is not an OSAKA in the sense of our definition as the fixed point algebra is not a loop algebra of the compact type and as the subjacent Kac-Moody algebra is not a real Kac-Moody algebra. Both of those algebras describe the same symmetric space as their $\mathcal{P}$-components are the same.
Irreducible complex affine geometric Kac-Moody algebras are in bijection with affine Kac-Moody algebras. A complete list consists thus of the algebras of the (non twisted) types $\mathfrak{a}_{n}^{(1)}, n\geq 1$, $\mathfrak{b}_{n}^{(1)}, n\geq 2$, $\mathfrak{c}_{n}^{(1)}, n\geq 3$, $\mathfrak{d}_n^{(1)}, n\geq 3$, $\mathfrak{e}_6^{(1)}$, $\mathfrak{e}_{7}^{(1)}$, $\mathfrak{e}_{8}^{(1)}$, $\mathfrak{f}_{4}^{(1)}$, $\mathfrak{g}_{2}^{(1)}$ and of the algebras of the (twisted) types $\widetilde{\mathfrak{a}}_{1}'$, $\widetilde{\mathfrak{c}}_{l}'$, $\widetilde{\mathfrak{b}}_l^t$, $\widetilde{\mathfrak{c}}_{l}^t$, $\widetilde{\mathfrak{f}}_{4}^t$, and $\widetilde{\mathfrak{g}}_{2}^t$\,.. 
\end{itemize}

\begin{proposition}
The OSAKAs described above are precisely all irreducible OSAKAs of non-compact type.
\end{proposition}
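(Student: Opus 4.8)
The plan is to reduce the statement to its compact counterpart, Proposition~\ref{prop:osakas_compact}, by means of the compact/non-compact duality $\mathcal{U}=\mathcal{K}\oplus\mathcal{P}\mapsto\mathcal{U}^{*}=\mathcal{K}\oplus i\mathcal{P}$ supplied by Theorem~\ref{theoremofHeintzegross}; the argument then has two halves, an irreducibility check for the two listed families and a completeness argument via this duality. First I would confirm irreducibility. For type $III$ the underlying real form is a non-compact real form of a \emph{simple} complex geometric affine Kac-Moody algebra, so its complexification is that simple algebra and any $\widehat{L}(\rho)$-invariant Kac-Moody ideal would complexify to a proper nonzero ideal of a simple algebra, which is impossible. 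For type $IV$ the underlying real algebra is the realification of a simple $\widehat{L}(\mathfrak{g}_{\mathbb{C}},\sigma)$, whose complexification splits into two simple copies interchanged by the conjugation $\widehat{L}(\rho_{0})$; hence the only conjugation-invariant complex ideals are $0$ and the whole algebra, and no proper invariant Kac-Moody ideal survives.

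For completeness, let $(\widehat{L}(\mathfrak{g},\sigma)_{D},\widehat{L}(\rho))$ be an arbitrary irreducible OSAKA of non-compact type. By Definition~\ref{types of OSAKAs} and the discussion preceding Theorem~\ref{theoremofHeintzegross}, the involution $\widehat{L}(\rho)$ is of the second kind and its $+1$-eigenspace $\mathcal{K}=\textrm{Fix}(\widehat{L}(\rho))$ is a compact loop algebra, so that $\widehat{L}(\mathfrak{g},\sigma)_{D}=\mathcal{K}\oplus\mathcal{P}$ is a Cartan decomposition. Dualizing via $\mathcal{U}:=\mathcal{K}\oplus i\mathcal{P}$ produces a compact real form carrying an involution $\widehat{\rho}$ of the second kind with the same fixed algebra $\mathcal{K}$; by Theorem~\ref{theoremofHeintzegross} (extended to the reducible case as in Section~\ref{sect:real_forms}) this dualization is a bijection between conjugacy classes of non-compact real forms and of compact real forms equipped with second-kind involutions, and it sends $(\widehat{L}(\mathfrak{g},\sigma)_{D},\widehat{L}(\rho))$ to a compact OSAKA $(\mathcal{U},\widehat{\rho})$. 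Since an invariant Kac-Moody ideal on either side complexifies to an invariant ideal on the other, the bijection preserves irreducibility, so $(\mathcal{U},\widehat{\rho})$ is an irreducible compact OSAKA and by Proposition~\ref{prop:osakas_compact} is of type $I$ or type $II$.

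It remains to trace the dualization back. A type $I$ datum (a simple complex algebra with its compact real form and a second-kind involution) dualizes to a non-compact real form of a simple complex algebra together with its Cartan involution, i.e.\ a type $III$ OSAKA, while a type $II$ datum (two compact copies with the swapping involution) dualizes to the realification of a simple complex algebra with conjugation, i.e.\ a type $IV$ OSAKA, the last identification using Lemma~\ref{decomposition of loop algebras}; thus every irreducible non-compact OSAKA is of type $III$ or $IV$. The hard part will be upgrading the real-form duality of Theorem~\ref{theoremofHeintzegross} to a genuine bijection of \emph{OSAKAs}: one must check that the Cartan involution on the non-compact side matches the defining second-kind involution on the compact side exactly, including their agreement on the central element $c$ and the derivation $d$, where the $i$-scaling of $\mathcal{P}$ interacts with the extension $\mathbb{R}c\oplus\mathbb{R}d$ as in the dualization construction preceding Theorem~\ref{eithercompactornoncompact}. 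The most delicate point is preservation of irreducibility in both directions --- in particular ruling out that the realification underlying a type $IV$ OSAKA splits into two $\widehat{L}(\rho)$-invariant Kac-Moody ideals --- which is settled by complexifying and applying the decomposition Lemma~\ref{decomposition of loop algebras}.
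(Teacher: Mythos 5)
Your proof is correct and takes the route the paper itself endorses: the paper's entire proof of this proposition consists of the remark that the direct argument parallels Proposition~\ref{prop:osakas_compact} and that ``alternatively the result can be deduced by duality (see below)'', and your argument is precisely that duality deduction, made explicit via the dual-OSAKA proposition and the type $I\Leftrightarrow III$, $II\Leftrightarrow IV$ correspondence which the paper develops in the subsequent duality subsection. One small wording fix: since $\widehat{L}(\mathfrak{g}_{\mathbb{C}},\sigma)$ is never simple as a Lie algebra (it always contains the center $\mathbb{F}c$ as a proper ideal, and ``simple'' in this paper only means $\mathfrak{g}$ is simple), your irreducibility checks for types $III$ and $IV$ should appeal to irreducibility in the paper's sense --- no proper ideal isomorphic to a Kac-Moody algebra, as in the paper's proof of Proposition~\ref{prop:osakas_compact} --- rather than to literal simplicity of the complexification.
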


\begin{proof}
The direct proof parallels the one of proposition~\ref{prop:osakas_compact}. 
Alternatively the result can be deduced by duality (see below).
\end{proof}

\subsection*{Duality between the compact type and the non-compact type}
\label{sect:duality}

Let us investigate the duality a little more closely. We follow the presentation for finite dimensional Lie algebras given in~\cite{Helgason01}.
Let $(\mathcal{G},\rho)$ be an OSAKA and let $\mathcal{G}=\mathcal{K}\oplus\mathcal{P}$ be the decomposition into the $\pm 1$-eigenspaces of $\rho$. Then $\mathcal{G}^*=\mathcal{K}\oplus\mathcal{P}$ is a real Lie form of $\mathcal{G}_{\mathbb{C}}$. For any element $g\in \mathcal{G}$ let $g=k+p$ be the decomposition into the $\mathcal{K}$- and $\mathcal{P}$-component. Define $\rho^*:k+ip\mapsto k-ip$.

\begin{proposition}
Let $(\mathcal{G},\rho)$ be an OSAKA. 
\begin{enumerate}
\item Then the pair $(\mathcal{G}^*, \rho^*)$ is an OSAKA, called the dual OSAKA.
\item If $(\mathcal{G},\rho)$ is of compact type then $(\mathcal{G}^*, \rho^*)$ is of non-compact type and vice versa.
\end{enumerate}
\end{proposition}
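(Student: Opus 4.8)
The plan is to verify the two assertions by carefully tracking how the eigenspace decomposition and the involution behave under the passage $\mathcal{G}\mapsto\mathcal{G}^*$, using the real-form/involution correspondence and the Killing-form criterion established earlier. First I would address assertion (1). Since $(\mathcal{G},\rho)$ is an OSAKA, $\rho$ is an involutive automorphism of the real form $\mathcal{G}$ with decomposition $\mathcal{G}=\mathcal{K}\oplus\mathcal{P}$ into its $\pm1$-eigenspaces; the bracket relations $[\mathcal{K},\mathcal{K}]\subseteq\mathcal{K}$, $[\mathcal{K},\mathcal{P}]\subseteq\mathcal{P}$, $[\mathcal{P},\mathcal{P}]\subseteq\mathcal{K}$ follow formally from $\rho$ being an involutive homomorphism. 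I would check that $\mathcal{G}^*=\mathcal{K}\oplus i\mathcal{P}$ is closed under the bracket using exactly these relations (the factors of $i$ combine correctly precisely because $[\mathcal{P},\mathcal{P}]$ lands in $\mathcal{K}$), so $\mathcal{G}^*$ is a real Lie algebra and, by the discussion following the definition of real form in Section~\ref{sect:algebraic_kac_moody}, a real form of $\mathcal{G}_{\mathbb{C}}$. Then I would confirm that $\rho^*$ is a well-defined involutive automorphism of $\mathcal{G}^*$ with the same $+1$-eigenspace $\mathcal{K}$ and $-1$-eigenspace $i\mathcal{P}$, and that $\mathrm{Fix}(\rho^*)=\mathcal{K}=\mathrm{Fix}(\rho)$; since the OSAKA axioms constrain only the fixed-point algebra (conditions on $\mathrm{Fix}$ being a compact loop/Kac-Moody algebra and meeting $\mathfrak{g}_a$ trivially), and these are unchanged, $(\mathcal{G}^*,\rho^*)$ is again an OSAKA.

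For assertion (2) I would invoke the Killing-form characterization of type. By the proposition at the end of Section~\ref{sect:compact_loop}, for a non-compact OSAKA with Cartan decomposition $\mathcal{G}=\mathcal{K}\oplus\mathcal{P}$ the Cartan-Killing form $B$ is negative definite on $\mathcal{K}$ and positive definite on $\mathcal{P}$; compact type means $B$ is negative definite on all of $\mathcal{G}$. The key computation is the effect of multiplying $\mathcal{P}$ by $i$ on $B$: since $B$ is complex-bilinear on $\mathcal{G}_{\mathbb{C}}$, we have $B(ip,ip)=-B(p,p)$ for $p\in\mathcal{P}$, while $B(k,k)$ is unchanged and the cross terms $B(\mathcal{K},\mathcal{P})$ vanish by orthogonality of the eigenspaces. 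Hence if $\mathcal{G}$ is of compact type, so that $B<0$ on both $\mathcal{K}$ and $\mathcal{P}$, then on $\mathcal{G}^*$ we get $B<0$ on $\mathcal{K}$ but $B>0$ on $i\mathcal{P}$, which is exactly the signature of a Cartan decomposition of the non-compact type; conversely, starting from non-compact type the same sign flip produces a negative-definite form on all of $\mathcal{G}^*$, i.e.\ compact type. I would also note the idempotency $(\mathcal{G}^*)^*\cong\mathcal{G}$, which makes the ``vice versa'' automatic.

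The main obstacle I anticipate is the bookkeeping of the central and derivation directions $\mathbb{R}c\oplus\mathbb{R}d$ under dualization. By the lemma characterizing effective OSAKAs, $\widehat{L}(\rho)$ is of second type with $\rho(c)=-c$ and (up to conjugation) $\rho(d)=-d$, so both $c$ and $d$ lie in $\mathcal{P}$; consequently dualization replaces them by $ic$ and $id$. I would have to confirm that the two-dimensional extension of $\mathcal{G}^*$ is the correct one, i.e.\ that the cocycle $\omega$ and the derivation bracket $[d,\cdot]$ transform compatibly so that $\mathcal{K}\oplus i\mathcal{P}$ is genuinely a geometric affine Kac-Moody algebra in the sense of Section~\ref{sect:geometric_kac_moody}, rather than merely an abstract Lie algebra. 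This is the point where the finite-dimensional analogy in~\cite{Helgason01} needs the Kac-Moody-specific input from Theorem~\ref{theoremofHeintzegross} and the real-form structure theory, and I expect the verification that the signature of the Killing form on the $(c,d)$-plane flips correctly to require the most care; everything else is a direct transcription of the classical argument.
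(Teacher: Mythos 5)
Your proposal is correct, and on part (1) it follows essentially the paper's own route: the paper's proof is exactly the observation that $\mathrm{Fix}(\rho^*)=\mathrm{Fix}(\rho)=\mathcal{K}$ is a compact loop algebra, so the OSAKA axioms transfer; you simply make explicit the bracket-closure of $\mathcal{K}\oplus i\mathcal{P}$ and the real-form property that the paper leaves tacit. On part (2), however, you take a genuinely different route. The paper disposes of the type-flip in one line by citing the Heintze--Gro{\ss} results (their section 6, i.e.\ the correspondence of theorem~\ref{theoremofHeintzegross} between involutions of the compact form and non-compact real forms via $\mathcal{K}\oplus\mathcal{P}\mapsto\mathcal{K}\oplus i\mathcal{P}$), whereas you argue intrinsically with the Killing form: negative definiteness on a compact form, the signature statement for Cartan decompositions from section~\ref{sect:compact_loop}, the sign flip $B(ip,ip)=-B(p,p)$, orthogonality of eigenspaces, and then the dichotomy of theorem~\ref{eithercompactornoncompact} to pin down the type. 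Your version is more self-contained and makes the mechanism visible, at the cost of leaning on the identification of ``negative definite Killing form on the loop part'' with ``compact type'' at the Kac-Moody level, which the paper obtains through the conjugacy-based results it cites. Finally, the obstacle you flag about the $c,d$-directions is real but already resolved earlier in the paper: in the dualization discussion of section~\ref{sect:real_forms} one has $c,d\in\mathcal{P}$ for involutions of second kind, so they dualize to $ic,id$ (as is visible in the explicit real forms $\oplus\, i\mathbb{R}c\oplus i\mathbb{R}d$ of section~\ref{sect:a_1_example}), and no signature condition on the $(c,d)$-plane enters the type classification, since the paper's Cartan-Killing form is defined only on the loop algebra.
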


\begin{proof}
Suppose $\mathcal{G}=\mathcal{K}\oplus \mathcal{P}$.  By assumption $\mathcal{G}$ is an OSAKA. Hence $\mathcal{K}=Fix(\rho)$ is a loop algebra of the compact type. $\rho^*$ is an involution on $\mathcal{G}^*$. $Fix(\rho^*)=Fix(\rho)=\mathcal{K}$ is hence also a loop algebra of the compact type. Thus $(\mathcal{G}^*, \rho^*)$ is an OSAKA proving (1).\\ If $(\mathcal{G}, \rho)$ is irreducible then so is $(\mathcal{G}^*, \rho^*)$ and vice-versa.
If $(\mathcal{G},\rho)$ is of compact type then $(\mathcal{G}^*,\rho^*)$ is of non-compact type by results of~\cite{Heintze09}, section 6. .
\end{proof}

\begin{corollary}
The OSAKAs of the following types are dual:
\begin{center}\begin{tabular}{ccc}
type I&$\Leftrightarrow$& type III\\
type II&$\Leftrightarrow$&type IV
\end{tabular} 
\end{center}
\end{corollary}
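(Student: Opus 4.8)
The plan is to track, under the dualization map $(\mathcal{G}, \rho) \mapsto (\mathcal{G}^*, \rho^*)$ of the preceding proposition, the two structural invariants that separate the four types: the isomorphism class of the underlying complexification $\mathcal{G}_{\mathbb{C}}$, and the explicit form of the involution. Since $\mathcal{G}$ and $\mathcal{G}^* = \mathcal{K} \oplus i\mathcal{P}$ are both real forms of the \emph{same} complex algebra $\mathcal{G}_{\mathbb{C}}$, dualization never changes whether $\mathcal{G}_{\mathbb{C}}$ is built from a single simple factor (types $I$ and $III$) or from a product of two copies of a simple algebra (types $II$ and $IV$). This already forbids any crossing between the families $\{I, III\}$ and $\{II, IV\}$; combined with the proposition's assertion that compact type is exchanged with non-compact type, it only remains to match $I$ with $III$ and $II$ with $IV$ and to check the involutions.

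For the passage from type $I$ to type $III$, I would start from a compact form $\widehat{L}(\mathfrak{g}, \sigma)$ of a simple complex Kac-Moody algebra with an involution $\rho$ of the second kind, and write $\mathcal{G} = \mathcal{K} \oplus \mathcal{P}$ for its $\pm 1$-eigenspaces. The dual form $\mathcal{G}^* = \mathcal{K} \oplus i\mathcal{P}$ is, by the preceding proposition, a non-compact real form of the same simple complex algebra. The key point is to recognize $\rho^*$ as the Cartan involution: by construction $\rho^*$ is $+1$ on $\mathcal{K}$ and $-1$ on $i\mathcal{P}$, so applying the dualization recipe to \emph{its} eigenspaces gives $\mathcal{K} \oplus i(i\mathcal{P}) = \mathcal{K} \oplus \mathcal{P} = \mathcal{G}$, the compact form. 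This is exactly the defining property of the Cartan involution recorded in the description of type $III$, so $(\mathcal{G}^*, \rho^*)$ is a type $III$ OSAKA.

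The passage from type $II$ to type $IV$ is the computational heart of the argument. Starting from $\widehat{L}(\mathfrak{g}_{\mathbb{R}} \times \mathfrak{g}_{\mathbb{R}}, \sigma \oplus \sigma)$ with the factor-switching involution $\varphi$, I would compute the eigenspaces explicitly: the $+1$-eigenspace is $\mathcal{K} = \{(f(t), f(-t), 0, 0)\}$ and the $-1$-eigenspace is $\mathcal{P} = \{(f(t), -f(-t), r_c, r_d)\}$. Forming $\mathcal{G}^* = \mathcal{K} \oplus i\mathcal{P}$ and writing a general element through $h(t) := f(t) + ig(t) \in \mathfrak{g}_{\mathbb{C}}$, one sees that every element of $\mathcal{G}^*$ has the shape $(h(t), \overline{h(-t)}, ir_c, ir_d)$, where the bar is conjugation with respect to the compact real form $\mathfrak{g}_{\mathbb{R}}$. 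The assignment $h \mapsto (h(t), \overline{h(-t)})$ is then a real Lie algebra isomorphism of $L(\mathfrak{g}_{\mathbb{C}}, \sigma)$ onto the loop part of $\mathcal{G}^*$ (here one uses $\overline{[x, y]} = [\bar{x}, \bar{y}]$, so the second component is compatibly determined by the first), while the central and derivation terms combine to $\mathbb{R}c \oplus \mathbb{R}d$. This identifies $\mathcal{G}^*$ with the realification $L(\mathfrak{g}, \sigma) \oplus iL(\mathfrak{g}, \sigma) \oplus \mathbb{R}c \oplus \mathbb{R}d$, and under this identification $\rho^*$, which fixes $\mathcal{K}$ and negates $i\mathcal{P}$, becomes precisely conjugation with respect to the compact real form. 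Hence $(\mathcal{G}^*, \rho^*)$ is a type $IV$ OSAKA.

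Finally, since dualization is involutive, with $(\mathcal{G}^*)^* \cong \mathcal{G}$ and $(\rho^*)^* = \rho$ up to the natural identification, exactly as in the finite-dimensional theory of~\cite{Helgason01}, the reverse implications $III \to I$ and $IV \to II$ follow at once. The main obstacle I anticipate is the type $II$ to type $IV$ identification: one must handle the diagonal embedding $h \mapsto (h(t), \overline{h(-t)})$ with care and verify that the cocycle and the $c, d$ extension survive the passage to $i\mathcal{P}$ with the correct reality, so that the abstract dual genuinely lands in the concrete realified algebra used to define type $IV$.
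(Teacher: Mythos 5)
Your proposal is correct and takes essentially the same route as the paper, which states this corollary without any separate proof, treating it as an immediate consequence of the preceding dualization proposition together with the classification lists. Your added details --- the observation that dualization preserves the complexification (so $\{I,III\}$ cannot cross into $\{II,IV\}$), the identification of $\rho^*$ with the Cartan involution for type $III$, and the explicit computation exhibiting the dual of a type $II$ pair as $L(\mathfrak{g},\sigma)\oplus iL(\mathfrak{g},\sigma)$ with conjugation relative to the compact form --- are precisely the verifications the paper leaves implicit, and they check out against its definitions of types $III$ and $IV$.
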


\subsection*{OSAKAs of Euclidean type}

The last class consists of OSAKAs of Euclidean type. Let $\mathfrak{a}$ be an Abelian real Lie algebra. Then the pair $\left(\widehat{L}(\mathfrak{a}), -\Id\right)$ is an OSAKA of Euclidean type. It is irreducible iff $\mathfrak{a}$ is $1$-dimensional.

\section{OSAKAs of type $\mathfrak{a}_{1}^{(1)}$}
\label{sect:a_1_example}

In this final section we calculate as an extended example all OSAKAs associated to the complex affine Kac-Moody algebra of type $\mathfrak{a}_{1}^{(1)}$. Even being the smallest available example, the calculations are quite involved. We start by recalling the real forms and involutions of $\mathfrak{sl}(2, \mathbb{C})$ and describe then the real forms of second type as they are constructed in~\cite{Heintze09}. Then we construct the $8$ different OSAKAs.

\subsection*{Real forms of type \boldmath$A_1^1$}

Recall that $A_1^1$ is the non-twisted affine Kac-Moody algebra $\widehat{L}(\mathfrak{sl}(2, \mathbb{C}))$ associated to the loop algebra 
$$L(\mathfrak{sl}(2, \mathbb{C})):=\{f:S^1\longrightarrow \mathfrak{sl}(2, \mathbb{C})|  f\ \textrm{satisfies some regularity conditions}\}\,.$$ 

Its real forms are described in~\cite{Heintze09} sect.7. We refer to this reference for additional details and proofs. From our classification of OSAKAs, we see that we need for the construction of OSAKAs 

\begin{itemize}
\item the compact real form,
\item the almost split real forms.
\end{itemize}

The compact real form has an appealing description as the Kac-Moody algebra $\widehat{L}(\mathfrak{su}(2))$ associated to $L(\mathfrak{su}(2))$, where 

$$L(\mathfrak{su}(2)):=\{f:S^1\longrightarrow \mathfrak{su}(2)|  f\ \textrm{satisfies some regularity condition}\}\,.$$

The three involutions of second type of $L(\mathfrak{su}(2))$ as the three almost split real forms of $L(\mathfrak{sl}(2, \mathbb{C}))$ are associated to pairs $[\rho_+, \rho_-]$ of involutions of the finite dimensional Lie algebra $\mathfrak{su}(2)$. Here the identity is taken to be an involution.  Two pairs $[\rho_+, \rho_-]$ and $[\widetilde{\rho}_+, \widetilde{\rho}_-]$ define the same involution (and in consequence isomorphic real form) iff 
$$\widetilde{\rho}_+=\alpha \rho_{\pm} \alpha^{-1}\quad \textrm{and}\quad \widetilde{\rho}_-=\beta \rho_{\mp} \beta^{-1}\,,$$
for some automorphisms $\alpha, \beta\in \textrm{Aut}(\mathfrak{g})$ such that $\alpha^{-1}\beta\in \textrm{Int}(\mathfrak{g})$ (see~\cite{Heintze09}, section 5.)

Besides the identity,  $\mathfrak{su}(2)$, has only one involution: complex conjugation, denoted by $\mu$. Hence we get the following $3$ pairs of invariants~\cite{Heintze09}, section 7:

$$[\rho_+, \rho_-]=[\Id,\Id],\quad [\rho_+, \rho_-]=[\Id, \mu],\quad [\rho_+, \rho_-]=[\mu,\mu]\,.$$
The involution associated to a pair of invariants $[\rho_+, \rho_-]$ is defined by $f(t)\mapsto \rho_+(f(-t))$ on the Lie algebra $L(\mathfrak{g}, \sigma)$ such that $\sigma=\rho_-\rho_+$.

The associated real forms can now be described as follows (In the following expressions the power series $\sum_n u_n e^{int}$ are understood to represent elements in $\widehat{L}(\mathfrak{sl}(2, \mathbb{C}))$, hence functions of some prescribed regularity):

\begin{itemize}
\item  $\mathfrak{a}_{1}^{(1)}[\Id, \Id]$ corresponds to series of the form
 $$\left\{\sum u_n e^{int}|u_n\in \mathfrak{su}(2)\right\}\,.$$
\item  $\mathfrak{a}_{1}^{(1)}[\mu, \mu]$ corresponds to series of the form  $$\left\{\sum u_n e^{int}|u_n\in \mathfrak{sl}(2, \mathbb{R})\right\}\,.$$
\item  $\mathfrak{a}_{1}^{(1)}[\Id, \mu]$ corresponds to series of the form
 $$\left\{\sum u_n e^{in\frac{t}{2}}|u_n\in \mathfrak{sl}(2, \mathbb{R}), i^n u_n\in \mathfrak{su}(2)\right\}\,.$$
\end{itemize}

\subsection*{OSAKAs of type \boldmath$I$}

Following our classification, OSAKAs of type $I$ consist of pairs consisting of the compact real form $\widehat{L}(\mathfrak{su}(2))$ together with an involution $\widehat{\rho}$, whose fixed-point algebra is a loop algebra. Hence $\widehat{\rho}(c)=-c$ and $\widehat{\rho}(d)=-d$ and $\widehat{\rho}$ is of second kind. As described above, these involutions correspond to the pairs of invariants

$$[\rho_-, \rho_+]=[\Id,\Id],\quad [\rho_-, \rho_+]=[\Id, \mu],\quad [\rho_-, \rho_+]=[\mu,\mu]\,.$$

\subsubsection*{The compact real OSAKA of type \boldmath$\mathfrak{a}_{1}^{(1)}[\Id, \Id]$}

The Lie algebra is $\mathfrak{g}:=\widehat{L}(\mathfrak{su}(2))$, the involution $\rho$ is defined by 
$$\rho(f(t), r_c, r_d) = (f(-t),-r_c, -r_d) \,.$$

\subsubsection*{The compact real OSAKA of type \boldmath$\mathfrak{a}_{1}^{(1)}[\Id, \mu]$}

The Lie algebra is $\mathfrak{g}:=\widehat{L}(\mathfrak{su}(2))$, the involution $\rho$ is defined by 
$$\rho(f(t), r_c, r_d) = \left(\Ad {\textrm{\small $ \left(\begin{array}{cc}1&0\\0&-1\end{array}\right)$\normalsize}}\overline{f(-t)},-r_c, -r_d \right) \,.$$

\subsubsection*{The compact real OSAKA of type \boldmath$\mathfrak{a}_{1}^{(1)}[\mu, \mu]$}

The Lie algebra is $\mathfrak{g}:=\widehat{L}(\mathfrak{su}(2))$, the involution $\rho$ is defined by 
$$\rho(f(t), r_c, r_d) = (\overline{f(-t)},-r_c, -r_d) \,.$$


\subsection*{The OSAKA of type \boldmath$II$}

Kac-Moody symmetric spaces of type $II$ correspond exactly to compact real forms of Kac-Moody groups. 

The associated OSAKA of type $II$ is constructed as follows: Let first 
$$\mathfrak{g}:=\widehat{L}(\mathfrak{su}(2)\times \mathfrak{su}(2))\,.$$
Hence $\mathfrak{g}$ is indecomposable but reducible as a geometric affine Lie algebra. We describe elements in $\mathfrak{g}$ by quadruples $(f(t),g(t), r_c, r_d)$ where $f(t)$ and $g(t)$ denote elements in $L(\mathfrak{su}(2))$ and $r_c$ resp.~$r_d$ denote the $c$~resp.~$d$-extension. The involution $\rho$ is defined by 
$$\rho(f(t), g(t), r_c, r_d)\ =\ (f^*(t), g^*(t), -r_c, -r_d)\,,$$
where $f^*(t):=-\overline{f(t^{-1})}^t$. It is easy to see that $\textrm{Fix}(\rho):=\left\{(f(t), f^*(t), 0,0), f(t)\in L(\mathfrak{su}(2))\right\}$. 
Hence we get
$$\textrm{Fix}(\rho)\cong \widehat{L}(\mathfrak{su}(2))\,,$$
which is a loop group of the compact type.
Hence, the pair $(\mathfrak{g}, \rho)$ is an OSAKA. As the involution $\rho$ exchanges the two $\widehat{L}(\mathfrak{su}(2))$-subalgebras, it is an irreducible OSAKA.

\subsection*{OSAKAs of type \boldmath$III$}

OSAKAs of type III consist of the three almost-split real forms of $\widehat{L}(\mathfrak{sl}(3, \mathbb{C}))$ together with their Cartan involutions.

\subsubsection*{The almost split real form  \boldmath$\mathfrak{a}_{1}^{(1)}[\Id, \Id]$}

This is the real form associated to the invariants $[\Id, \Id]$. The Lie algebra $\mathcal{G}[\Id, \Id]$ is defined by 
$$\mathfrak{a}_{1}^{(1)}[\Id, \Id]:=\left\{\sum u_n e^{int}|u_n\in \mathfrak{su}(2)\right\}\oplus i\mathbb{R}c\oplus i\mathbb{R}d\,.$$

Let us now investigate the involution $\rho$. $\rho$ is the restriction of the Cartan involution $\omega$ to $\mathcal{G}[\Id, \Id]$. 
The Cartan involution $\omega$ of $\widehat{L}(\mathfrak{sl}(2, \mathbb{C}))$ is defined on the generators $h_i, e_i, f_i, i=1,2$ by $\omega(h_i)=-h_i$, $\omega(e_i)=-f_i$ and $\omega(f_i)=-e_i$.

The Cartan involution of $\mathfrak{a}_{1}^{(1)}[\Id, \Id]$ leads to the Cartan decomposition into the $\pm 1$-eigenspaces:

$$\mathfrak{a}_{1}^{(1)}[\Id, \Id]=\mathcal{K}\oplus \mathcal{P}$$

\noindent where the $+1$-eigenspace $\mathcal{K}$ is given by

$$\mathcal{K}:=\left\{\sum u_n e^{int}|u_n\in \mathfrak{su}(2)|u_n=-\overline{u}_{-n}^t\right\}\,.$$

\noindent and the $-1$-eigenspace $\mathcal{P}$ is given by

$$\left\{\sum u_n e^{int}|u_n\in \mathfrak{su}(2)|u_n=+\overline{u}_{-n}^t\right\}\,.$$

\noindent Remark, that $\mathcal{K}\subset L(\mathfrak{su}(2))$; hence $\mathcal{K}$ is a loop algebra of the compact type and the pair

$$\left(\mathfrak{a}_{1}^{(1)}[\Id, \Id], \rho\equiv\omega\right)$$
is an OSAKA of the non-compact type.


\subsubsection*{The non-compact OSAKA of type \boldmath$\mathfrak{a}_{1}^{(1)}[\Id, \mu]$}

The Lie algebra $\mathcal{G}$ is the real form associated to the invariants $[\Id, \mu]$. Hence the Lie algebra $\mathcal{G}$ is defined by 

$$\mathfrak{a}_{1}^{(1)}[\Id, \mu]:=\left\{\sum u_n e^{in\frac{t}{2}}|u_n\in \mathfrak{sl}(2, \mathbb{R})\ \textrm{and}\ (i)^{n}u_n\in \mathfrak{su}(2)\right\}\oplus i\mathbb{R}c\oplus i\mathbb{R}d\,.$$

Let us now investigate the involution $\rho$. $\rho$ is the restriction of the Cartan involution $\omega$ to $\mathcal{G}[\Id, \Id]$. 
The Cartan involution $\omega$ is defined on the generators $h_i, e_i, f_i, i=1,2$ by $\omega(h_i)=-h_i$, $\omega(e_i)=-f_i$ and $\omega(f_i)=-e_i$.

The Cartan involution of $\mathfrak{a}_{1}^{(1)}[\Id, \Id]$ leads to the Cartan decomposition into the $\pm 1$-eigenspaces:
$$\mathfrak{a}_{1}^{(1)}[\Id, \mu]=\mathcal{K}\oplus \mathcal{P}$$

\noindent where the $+1$-eigenspace $\mathcal{K}$ is given by
$$\mathcal{K}:=\left\{\sum u_n e^{int}|u_n\in \mathfrak{sl}(2, \mathbb{R})\ \textrm{and} (i)^n u_n\in \mathfrak{su}(2)|u_n=-{u}_{-n}^t\right\}\,,$$

\noindent and the $-1$-eigenspace $\mathcal{P}$ is given by
$$\left\{\sum u_n e^{int}|u_n\in \mathfrak{sl}(2, \mathbb{R})\ \textrm{and} (i)^n u_n\in \mathfrak{su}(2)|u_n=+{u}_{-n}^t\right\}\,.$$

\noindent Remark, that $\mathcal{K}\subset L(\mathfrak{su}(2))$; hence $\mathcal{K}$ is a loop algebra of the compact type and the pair
$$\left(\mathfrak{a}_{1}^{(1)}[\Id, \mu], \rho\equiv\omega\right)$$
is an OSAKA of the non-compact type.


\subsubsection*{The non-compact OSAKA of type \boldmath$\mathfrak{a}_{1}^{(1)}[\mu, \mu]$}

From an algebraic point of view, the split real form is defined as the Lie algebra generated by a realization $\mathfrak{h}_{\mathbb{R}}$ and generators $e_1$, $e_2$ and $f_1$, $f_2$ subject to the Cartan-Serre relations. This is possible as a generalized Cartan matrix has only integer coefficients.

The Lie algebra associated to the pair of invariants $[\mu, \mu]$ is seen to be the Lie algebra 
$$\mathfrak{a}_{1}^{(1)}[\mu,\mu]:=\left\{\sum u_n e^{int}|u_n\in \mathfrak{sl}(2,\mathbb{R})\right\}\oplus i\mathbb{R}c\oplus i\mathbb{R}d\,.$$

The Cartan involution of $\mathfrak{a}_{1}^{(1)}[\mu, \mu]$ leads to the Cartan decomposition into the $\pm 1$-eigenspaces:
$$\mathfrak{a}_{1}^{(1)}[\mu, \mu]=\mathcal{K}\oplus \mathcal{P}$$

\noindent where the $+1$-eigenspace $\mathcal{K}$ is given by
$$\mathcal{K}=\left\{\sum u_n e^{int}|u_n\in \mathfrak{sl}(2,\mathbb{R})|u_n=-{u}_{-n}^t\right\}\,.$$

\noindent The $-1$-eigenspace $\mathcal{P}$ of the Cartan involution is defined by.
$$\mathcal{P}=\left\{\sum u_n e^{int}|u_n\in \mathfrak{sl}(2,\mathbb{R})|u_n={u}_{-n}^t\right\}\,.$$

\noindent Clearly $\mathcal{K}\subset L(\mathfrak{su}(2))$ and the pair
$$\left(\mathcal{G}[\mu, \mu], \rho\equiv\omega\right)$$
 is an OSAKA of the non-compact type.


\subsection*{OSAKA of type \boldmath$IV$}

This type diverges a little further from the form, one would expect from finite dimensional theory. The crucial point is, that the pair consisting of $\widehat{L}\left(\mathfrak{sl}(2, \mathbb{C})\right)$ together with its Cartan involution is not an OSAKA in the sense of our definition. 

Let first $\mathcal{G}_0:=L(\mathfrak{sl}(2, \mathbb{C})\otimes \mathfrak{sl}(2, \mathbb{C}))$. We decompose 
$$\mathfrak{sl}(2, \mathbb{C}):=\mathfrak{su}(2)\oplus i \mathfrak{su}(2)$$
and define elements in $\mathcal{G}_0$ via this decomposition as quadruples
$(f(t), g(t), h(t), k(t))$ such that $f(t), h(t) \in L(\mathfrak{su}(2))$ and $g(t), k(t) \in i L(\mathfrak{su}(2))$.

\noindent Define now the subalgebra
$\mathcal{G}_{\varphi}:=\{(f(t), g(t), f(-t), -g(-t))\}$. Then $\mathcal{G}_{\varphi}\cong L(\mathfrak{sl}(2, \mathbb{C}))$.

\noindent Then we define 
$$\mathcal{G}:=\mathcal{G}_{\varphi}\oplus \mathbb{R}c\oplus \mathbb{R}d\,.$$
\noindent and the involution $\rho:\mathcal{G}\longrightarrow \mathcal{G}$ is defined on hexatuples $(f(t), g(t), f(-t), -g(-t), r_c, r_d)$ by 
$$\omega(f(t), g(t), f(-t), -g(-t), r_c, r_d)=(f(t), -g(t), f(-t), g(-t), -r_c, -r_d)\,.$$

\noindent $\rho$ defines a splitting of $\mathcal{G}$ into its $\pm 1$-eigenspaces:
$$\mathcal{G}=\mathcal{K}\oplus \mathcal{P}$$

\noindent where the $+1$-eigenspace $\mathcal{K}$ is given by
$$\mathcal{K}:=\left\{f(t), 0, f(-t), 0, 0, 0\right\}\,.$$

\noindent and the $-1$-eigenspace $\mathcal{P}$ is given by
$$\mathcal{P}=\left\{0, g(t), 0, -g(-t), r_c, r_d\right\}\,.$$

\noindent Remark, that $\mathcal{K} \cong L(\mathfrak{su}(2))$; hence $\mathcal{K}$ is a loop algebra of the compact type and the pair
$$(\mathcal{G}, \rho)$$
is an OSAKA of the non-compact type. Furthermore $\rho$ is the Cartan involution on $\mathcal{G}$.


\bibliographystyle{alpha}
\bibliography{Doktorarbeit1}

\end{document}